\theoremstyle{plain}
\newtheorem{thm}{Theorem}[section]
\newtheorem{lemma}[thm]{Lemma}
\newtheorem{prop}[thm]{Proposition}
\newtheorem{cro}[thm]{Corollary}
\newtheorem{conj}[thm]{Conjecture}
\theoremstyle{definition}
\newtheorem{eg}[thm]{Example}
\newtheorem{dfn}[thm]{\textbf{Definition}}
\newtheorem{exer}{Exercise}
\newtheorem{rem}[thm]{Remark}
\newtheorem{obs}[thm]{Observation}
\newtheorem{des}[thm]{}
\newtheorem{ques}[thm]{Question}
\newtheorem{app}[thm]{Application}
\newtheorem{his}[thm]{Historical Note}
\newtheorem{notation}[thm]{Notation}
\newtheorem{conv}[thm]{Convention}
\newtheorem{cons}[thm]{Construction}
\newtheorem{aim}[thm]{Aim}
\newtheorem*{mconj}{Conjecture}
\newtheorem*{mthm}{Theorem}
\newcommand{\bthm}{\begin{thm}}
\newcommand{\ethm}{\end{thm}}
\newcommand{\bprop}{\begin{prop}}
\newcommand{\eprop}{\end{prop}}
\newcommand{\bcro}{\begin{cro}}
\newcommand{\ecro}{\end{cro}}
\newcommand{\bex}{\begin{ex}}
\newcommand{\eex}{ \end{ex}}
\newcommand{\bdf}{\begin{dfn}}
\newcommand{\edf}{ \end{dfn}}
\newcommand{\brem}{\begin{rem}}
\newcommand{\erem}{ \end{rem}}
\newcommand{\blem}{\begin{lemma}}
\newcommand{\elem}{\end{lemma}}
\newcommand{\bhis}{\begin{his}}
\newcommand{\ehis}{ \end{his}}
\newcommand{\beg}{\begin{eg}}
	\newcommand{\eeg}{ \end{eg}}
\newcommand{\bobs}{\begin{obs}}
\newcommand{\eobs}{ \end{obs}}
\newcommand{\bques}{\begin{ques}}
\newcommand{\eques}{\end{ques}}
\newcommand{\bdes}{\begin{des}}
\newcommand{\edes}{\end{des}}
\newcommand{\bproof}{\begin{proof}}
\newcommand{\eproof}{\end{proof}}
\newcommand{\bexr}{\begin{exer}}
\newcommand{\eexr}{ \end{exer}}
\newcommand{\bnote}{\begin{notation}}
\newcommand{\enote}{\end{notation}}
\newcommand{\bconj}{\begin{conj}}
\newcommand{\econj}{ \end{conj}}
\newcommand{\bconv}{\begin{conv}}
\newcommand{\econv}{ \end{conv}}
\newcommand{\bapp}{\begin{app}}
\newcommand{\eapp}{ \end{app}}
\newcommand{\bcons}{\begin{cons}}
\newcommand{\econs}{ \end{cons}}
\newcommand{\baim}{\begin{aim}}
\newcommand{\eaim}{\end{aim}}
\newcommand{\bmthm}{\begin{mthm}}	\newcommand{\emthm}{\end{mthm}}
\newcommand{\bmconj}{\begin{mconj}}	\newcommand{\emconj}{\end{mconj}}
\newcommand{\bnum}{\begin{enumerate}}
\newcommand{\enum}{\end{enumerate}}
\newcommand{\barray}{\begin{array}{l}}
\newcommand{\earray}{\end{array}}
\newcommand{\bmeq}{\begin{eqnarray*}}
\newcommand{\emeq}{\end{eqnarray*}}
\newcommand{\bmeqnum}{\begin{eqnarray}}
\newcommand{\emeqnum}{\end{eqnarray}}
\newcommand{\Ext}{\operatorname{Ext}}
\newcommand{\Gl}{\operatorname{GL}}
\newcommand{\X}{\operatorname{\mathbb{X}}}
\newcommand{\h}{\operatorname{H}}
\newcommand{\R}{\operatorname{R}}
\newcommand{\tor}{\operatorname{Tor}}
\newcommand{\Hom}{\operatorname{Hom}}
\newcommand{\End}{\operatorname{End}}
\newcommand{\coker}{\operatorname{Coker}}
\newcommand{\node}[1]
{\underset{#1}{\bullet}}
\newcommand{\nod}[1]{\underset{#1}{\circ}}
\newcommand{\kerr}{\textsf{Ker}}
\newcommand{\mmod}{\textsf{-}\mathsf{mod}}
\newcommand{\mMod}{\textsf{-}\mathsf{Mod}}
\newcommand{\mproj}{\textsf{-}\mathsf{proj}}
\newcommand{\stab}{\textsf{-}\mathsf{stab}}
\newcommand{\perf}{\textsf{-}\mathsf{perf}}
\newcommand{\simto}{\,\mathop{\longrightarrow}\limits^\sim\,}
\newcommand{\bbG}{\mathbb{G}}
\newcommand{\bbT}{\mathbb{T}}
\newcommand{\bbP}{\mathbb{P}}
\newcommand{\bbU}{\mathbb{U}}
\newcommand{\bbB}{\mathbb{B}}
\newcommand{\bbL}{\mathbb{L}}
\newcommand{\xnd}{\operatorname{\mathbb{X}_{n,d}}}
\newcommand{\rgc}{\R \Gamma_c}
\newcommand{\dli}{\R_{\bbL}^\bbG}
\newcommand{\dlr}{{}^*\R_{\bbL}^\bbG}
\newcommand{\sr}{^* \! \mathscr{R}^{\Gl_{n}}_{\Gl_{n-1}}}
\begin{document}

		\title{Mod $\ell$ cohomology of some Deligne--Lusztig varieties for $\Gl_n(q)$}
		\author{Parisa Ghazizadeh}
		\date{}
		\maketitle

		\begin{abstract}
			In this article, we study the mod $\ell$ cohomology of some Deligne--Lusztig varieties for $\Gl_n(q)$. We prove that the cohomology groups of these varieties are torsion-free under some conditions on the characteristic.  Under the torsion-free assumption we can compute the cohomology groups explicitly and we prove that the cohomology complex satisfies a partial-tilting condition, which is one of the necessary conditions in the geometric version of  Brou\'{e}'s abelian defect group conjecture. 
		\end{abstract}
	
\tableofcontents 
\section{Introduction}
Deligne--Lusztig varieties were originally introduced (in \cite{DL}) by 
Deligne and Lusztig to understand the ordinary representation theory (that is, over fields of characteristic zero) of finite reductive groups. 
In the ordinary setting, Lusztig (in \cite{Lus-character}) 
gave a complete classification of the irreducible characters of finite reductive groups using cohomology groups of Deligne--Lusztig varieties.
In the modular setting (that is, over fields positive characteristic), much less is known for the representations of finite reductive group $G$, even though
Deligne--Lusztig’s construction can be adapted to 
the modular setting (see for example the work of  Brou\'e \cite{Bro-isomet} 
and Bonnaf\'e--Rouquier \cite{Rouq-bonnDL}). 
One can replace the  cohomology groups of a Deligne--Lusztig variety with the cohomology complex, which encodes more information than the individual cohomology groups in the modular setting. This point of view was first suggested by Brou\'e as a strategy towards his abelian defect group conjecture, stated in 1988.

\begin{mconj}[{Abelian defect group conjecture, \cite{conj_broue}}] 
	Let $G$ be a finite group. Let $B$ be a block of $\overline{\mathbb{Z}}_\ell G$ of defect $D$ and let $b$ be its  correspondent of $
	\overline{\mathbb{Z}}_\ell N_{G}(D)$.
	If $D$ is abelian, then $$D^{b}(b \mmod) \simeq D^{b}(B \mmod).$$
\end{mconj}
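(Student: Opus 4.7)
This is a major open conjecture rather than a theorem, so any plan is necessarily programmatic. The strategy I would pursue is the geometric one advocated by Brou\'e and developed by Rickard, Rouquier, and Bonnaf\'e--Rouquier: produce an explicit bounded complex of $(B,b)$-bimodules that induces the equivalence $D^b(b\mmod) \simeq D^b(B\mmod)$. In the finite reductive group case the natural candidate for such a complex is $\rgc(\xlp, \overline{\mathbb{Z}}_\ell)$, cut out by $B$ on one side and $b$ on the other, where $\xlp$ is a Deligne--Lusztig variety attached to a parabolic $\bbP$ with Levi complement $\bbL$ chosen so that $\bbL^F$ matches $N_G(D)$ modulo a central subgroup.

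Concretely, the plan breaks into three steps. First, construct $\xlp$ so that $\rgc(\xlp, \overline{\mathbb{Z}}_\ell)$ carries compatible $B$- and $b$-actions and all of its cohomology is torsion-free; this is precisely the kind of input the present paper is designed to supply. Second, verify that this complex is a \emph{splendid tilting complex} in Rickard's sense, i.e.\ that its endomorphism ring in the derived category is concentrated in degree zero and isomorphic to $b$, and that it generates $D^b(B\mmod)$ as a triangulated category. Third, invoke Rickard's Morita theory for derived categories to convert the tilting complex into the desired derived equivalence.

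The hard part is the tilting condition in step two. Unpacked, it requires computing $\Hom$-spaces between cohomologies of products $\xlp \times \xlp$ with their Frobenius actions, and controlling the vanishing of higher $\Ext$-groups; both reduce to delicate $\ell$-adic cohomology computations where integral torsion would destroy the argument. This is why the torsion-freeness and partial-tilting statements for $\xlp$ proved in this article are genuine prerequisites rather than cosmetic refinements.

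The principal obstacle is conceptual: there is no general geometric mechanism known that forces $\rgc(\xlp, \overline{\mathbb{Z}}_\ell)$ to be a tilting complex in arbitrary abelian defect situations, and even the correct choice of $\bbL \subset \bbP$ depends subtly on the block $B$ and the prime $\ell$. Consequently a full proof would almost certainly proceed case by case, with the $\Gl_n(q)$ setting treated here serving as one of the building blocks, and with the general statement remaining out of reach of any single uniform argument.
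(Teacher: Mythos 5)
You are correct that this is an open conjecture, not a theorem: the paper itself never proves it, but only states it (with the citation to Brou\'e) as the motivating problem for the results that follow, namely the torsion-freeness and partial-tilting properties of $\R\Gamma_c(\X_{n,d},-)$. So there is no ``paper proof'' to compare against, and your recognition of that fact is the most important point.

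Your sketch of the geometric strategy is a fair summary of the program the paper is situated in, with one small imprecision worth flagging. You say the Levi $\bbL^F$ should ``match $N_G(D)$ modulo a central subgroup''; in the geometric version it is rather that the defect group $D$ arises as the Sylow $\ell$-subgroup of $(Z(\bbL)^\circ)^F$ for a $d$-split Levi $\bbL$ (see Theorem \ref{cupair2}), and the object that should be derived-equivalent to $B$ on the local side involves $N_G(\bbL,\lambda)$ for the relevant cuspidal datum, not $N_G(D)$ directly; the Brauer correspondent on $N_G(D)$ enters only after a further reduction. You are also right that the decisive open problem is establishing the tilting (and generation) conditions of Theorem \ref{rick1} for $\R\Gamma_c$, which is exactly why the paper isolates the torsion-freeness of $\h^\bullet_c(\X_{n,d},\mathbb{Z}_\ell)$ and the self-orthogonality of $\R\Gamma_c(\X_{n,d},\mathbb{F}_\ell)$ as the concrete partial steps it can actually prove.
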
	
Brou\'e  predicted that the cohomology complex $\R \Gamma_c(\X,\overline{\mathbb{Z}}_\ell )$ of a suitably chosen Deligne--Lusztig variety $\X$ should induce the derived equivalence. 
He stated a version of his conjecture for finite reductive groups and unipotent blocks, known as the ``geometric version'' of the conjecture. The varieties involved in the geometric version are a parabolic version of the classical Deligne--Lusztig varieties.
\smallskip

In order to induce a derived equivalence, the cohomology complex $D$ of the parabolic Deligne--Lusztig variety has to be partial-tilting, that is, to be a perfect complex such that all maps from $D$ to the shifted complex $D[n]$ are null-homotopic for any $n\neq 0$. Verifying this condition is one of the main difficulties in proving this conjecture.
\smallskip

In \cite{Malle-Michel-Broue}, Brou\'e, Malle and Michel made some progress on the geometric version of the abelian defect group conjecture for \emph{large} prime numbers $\ell$.
When $\ell$  is a prime number, larger than the \emph{Coxeter number}, the properties of $\ell$-blocks of $G$ depend only on the cyclotomic polynomial $\Phi_d(q)$ that $\ell$ divides (see \cite{l-sylow-conjugate}). This observation motivated the development of a theory of \emph{$\Phi_d$-blocks} of finite reductive groups, which are the generic blocks for any prime number $\ell$ dividing $\Phi_d(q)$  by Brou\'e, Malle and Michel in \cite{Malle-Michel-Broue}. 
\smallskip

If we assume that $\ell$ does not divide $q$ and $\ell \mid |G|$ then $\ell \mid \Phi_d(q)$ for some $d$. In addition if we assume that $\ell$ is larger than the Coxeter number (the largest integer $h$ such that $\Phi_{h}(q) \mid |G|$) then $d$ is unique and it is the order of $q$ in $\mathbb{F}_{\ell}^{\times}$. In other words, it is the smallest positive integer $d$ with $q^d \equiv 1 (\mathrm{mod} \ \ell)$. 
\smallskip

In this paper,  we assume that $\ell$ is a prime number which is large that divides the order of the group and $\ell \nmid q$. We focus on the finite reductive group $\Gl_n(q)$  We study the cohomology groups and cohomology complex of specific Deligne--Lusztig varieties, which are those varieties involved in the geometric version of Brou\'e's conjecture for unipotent $\Phi_d$-blocks of $\Gl_n(q)$. These varieties, which are denoted by $\X_{n,d}$ were explicitly defined in \cite{parabolicDM} and \cite{cohomology-of-DL_Dudas}.
 In the ordinary setting, the cohomology groups of these varieties have been explicitly described by Lusztig for $d = n$ in \cite{Lusztig-coxeter}, by Digne--Michel--Rouquier for the case $d = n-1$ in \cite{DMR} and in \cite{cohomology-of-DL_Dudas} by Dudas for general $d$. In this case, the unipotent representations of $\Gl_n(q)$  are labeled by the irreducible representations of its Weyl group $\mathfrak{S}_n$ which are in turn labeled by the partitions of $n$.
In \cite{cohomology-of-DL_Dudas} explicit formulas are given for $\h^{\bullet}_c(\X_{n,d},\mathbb{Q}_\ell)$ in terms of this parametrization. 
\smallskip

Our first aim is the generalization of that description in the modular setting. The first step towards this direction, and one our main results, is the following theorem (see \ref{torsion-free}):
\begin{mthm}
	If  $\ell \mid \Phi_m(q)$, where $m>d $, and  $m>n-d+1$, and $m > 6$, then $\h_c^{\bullet}(\X_{n,d},\mathbb{Z}_\ell)$ is torsion-free over $\mathbb{Z_\ell}$.
\end{mthm}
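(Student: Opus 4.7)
The plan is to apply the universal coefficients theorem: torsion-freeness of $\h_c^{\bullet}(\X_{n,d}, \mathbb{Z}_\ell)$ is equivalent to the equality $\dim_{\overline{\mathbb{F}}_\ell}\h_c^{\bullet}(\X_{n,d}, \overline{\mathbb{F}}_\ell) = \dim_{\overline{\mathbb{Q}}_\ell}\h_c^{\bullet}(\X_{n,d}, \overline{\mathbb{Q}}_\ell)$. Since the right-hand side is computed explicitly by Dudas in terms of unipotent characters indexed by partitions of $n$, the task is to control the modular cohomology from below and rule out extra composition factors.

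The first key observation is that the hypotheses force $2m > n+1$, hence $m > n/2$: from this, at most one $m$-hook fits in any partition of $n$, and the Sylow $\Phi_m$-subgroup of $\Gl_n(q)$ is cyclic (or trivial). Consequently every unipotent $\ell$-block of $\Gl_n(q)$ has cyclic defect, and by Fong--Srinivasan its Brauer tree is a straight line whose ordinary vertices are labeled by the partitions of $n$ sharing a fixed $m$-core. This reduces the modular analysis to a very restricted set of indecomposable modules.

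Next I would decompose $\rgc(\X_{n,d}, \mathbb{Z}_\ell)$ into its block components $b \cdot \rgc(\X_{n,d}, \mathbb{Z}_\ell)$ using the central idempotents of $\mathbb{Z}_\ell \Gl_n(q)$. Defect-zero blocks contribute projective, hence torsion-free, summands. For a block $b$ of cyclic defect, I would apply the Lusztig restriction $\dlr$ to an $m$-split Levi $\bbL$ of the form $\Gl_{n-m} \times \Gl_m$ (or a twisted $\Phi_m$-analog). Combining the block-disjointness of Lusztig functors with the compatibility of $\dlr$ and $\dli$ with cohomology of parabolic Deligne--Lusztig varieties (here the constraint $m > n-d+1$ guarantees that the Levi is compatible with the $\Phi_d$-structure underlying $\X_{n,d}$), the block component $b \cdot \rgc(\X_{n,d}, \mathbb{Z}_\ell)$ should be expressed as (derived) induction of the cohomology of a parabolic Deligne--Lusztig variety for a proper reductive subgroup. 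An induction on $n$, or equivalently on the $m$-weight of $b$, then reduces to base cases that are either defect zero or cuspidal $\Phi_m$-blocks, both of which are directly torsion-free.

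The main obstacle will be the bookkeeping for cyclic-defect blocks: one must verify that the ordinary characters appearing in $b \cdot \h_c^{\bullet}(\X_{n,d}, \overline{\mathbb{Q}}_\ell)$, read off Dudas' formulas, occupy positions on the Brauer tree such that no two \emph{adjacent} vertices can be in the same cohomological degree (this is exactly the condition that prevents a common simple composition factor, and thus torsion, from appearing). This combinatorial check amounts to tracking $m$-hook additions to the $m$-core partition labeling $b$ and comparing with the explicit degree formula for $\h_c^{\bullet}(\X_{n,d}, \overline{\mathbb{Q}}_\ell)$. The condition $m > 6$ is presumably what rules out small exceptional cores in which such an adjacency could accidentally occur; verifying this pattern of "gaps" across all blocks is the technical heart of the argument.
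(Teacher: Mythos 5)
Your proposal shares two surface observations with the paper (the universal-coefficient reformulation of torsion-freeness, and the fact that $m > d$ and $m > n-d+1$ force $m > n/2$, so every unipotent block has cyclic or trivial defect), but the route you sketch from there is not the paper's route, and two of your pivotal steps are gaps rather than outlines of proofs.

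First, the claim that block-cutting by $b$ followed by Lusztig restriction to an $m$-split Levi of the form $\Gl_{n-m}\times\Gl_1(q^m)$ expresses $b\cdot\R\Gamma_c(\X_{n,d},\mathbb{Z}_\ell)$ as (derived) induction of the cohomology of a parabolic Deligne--Lusztig variety for a proper subgroup is an unestablished compatibility statement. There is no general theorem saying Lusztig restriction to an $m$-split Levi turns $\R\Gamma_c(\X_{n,d},\mathbb{Z}_\ell)$ into the cohomology of a smaller $\X_{n',d'}$. The paper's only such compatibility (Theorem \ref{generallong}) is the corank-one \emph{Harish-Chandra} restriction to $\Gl_{n-1}(q)$, and it is derived from a concrete geometric stratification $\tilde\X_{n,d} = \tilde\X_{z_0}\sqcup\tilde\X_{z_1}$ of the Deligne--Lusztig variety, not from block theory. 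The paper's induction is therefore on $n$ via the triangle relating ${}^*\mathscr{R}^{\Gl_n}_{\Gl_{n-1}}\R\Gamma_c(\X_{n,d},R)$ to $\R\Gamma_c(\X_{n-1,d-1}\times\mathbb{G}_m,R)$ and $\R\Gamma_c(\X_{n-1,d},R)[-2](1)$, together with Lemma \ref{TF1} (any torsion in $\h^\bullet_c(\X_{n,d},\mathbb{Z}_\ell)$ killed by Harish-Chandra restriction to $\Gl_{n-1}$ would be cuspidal, hence could only live in middle degree, which is torsion-free by Proposition \ref{boun1}). You would need a replacement for this entire mechanism.

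Second, the combinatorial criterion you state --- that ``no two adjacent vertices can be in the same cohomological degree'' is exactly what prevents torsion --- is not a valid sufficient condition, and the paper's argument is not an adjacency check. After block-cutting, the hard piece is the principal-block summand, whose $\mathbb{Q}_\ell$-cohomology lives in two degrees carrying $\nabla(n)$ and $\nabla(\mu*(n-m))$ (non-adjacent on the line tree), yet torsion is not automatically ruled out. The paper excludes it by analyzing the connecting maps $\bar h$, $\bar g$ in the long exact sequence over $\mathbb{F}_\ell$: Step II uses Lemma \ref{simproj} (a statement about which modules can appear as $\Omega^2$ of a simple module in a perfect two-term complex), and Step III uses the vanishing $\h^2(\Gl_n(q),\mathbb{F}_\ell)=0$ to split a truncation of the complex and derive a contradiction with projectivity. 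Neither step resembles a Brauer-tree adjacency count. Relatedly, the hypothesis $m>6$ is not there to exclude ``small exceptional cores''; it is there precisely to exclude the pair $(n,d)=(6,4)$ in Lemma \ref{simproj}, as the paper notes afterward.

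So the plan is not a different-but-valid route: it is missing the actual induction mechanism (the corank-one restriction triangle) and replaces the boundary-map analysis with an incorrect combinatorial heuristic.
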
	
The  assumption on $m$ ensures that the principal $\ell$-block is a block with cyclic defect group; it also guarantees that the cohomology complex of $\X_{n,d}$ is a perfect complex.  The torsion-free result allows us to explicitly calculate each cohomology group $\h^i_c(\X_{n,d},\mathbb{Z}_\ell)$, along with the action of the Frobenius map. Given a partition $\lambda$ of $n$, define a $\mathbb{Z}_\ell \Gl_n(q)$-lattice $\nabla_{\mathbb{Z}_\ell}(\lambda)$ whose character is the unipotent character corresponding to $\lambda$ with the notation of \ref{def:Xnd}. We show that:
\begin{mthm} 
	 Assume $\ell \mid \Phi_m(q)$ with $m>d$ and $m>n-d+1$. If $\h^{\bullet}(X_{n,d},\mathbb{Z}_\ell)$ 
	is torsion-free over $\mathbb{Z}_\ell$ then
	the cohomology of $\X_{n,d}$ over $\mathbb{Z}_\ell$ is given by the following formulas:
	\begin{equation}
	\h^{x}(\X_{n,d},\mathbb{Z}_\ell)=\begin{cases}
	\nabla_{\mathbb{Z}_\ell}(\mu * x) \langle q^{x} \rangle, &  0\leq x<n-d \text{ and } x\leq d-1, \\
	\nabla_{\mathbb{Z}_\ell}(\mu * (x+1)) \langle q^{x+1}\rangle  &  n-d\leq x < d-1, \\
	\nabla_{\mathbb{Z}_\ell}(n) \langle q^n \rangle & x=2n-d-1, \\
	0 & \text{otherwise}.
	\end{cases}
	\end{equation}
\end{mthm}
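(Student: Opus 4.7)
The plan is to bootstrap from Dudas's rational computation and use the torsion-free hypothesis to extract integral information. First, the torsion-free assumption together with the universal coefficient theorem yields $\h_c^i(\X_{n,d}, \mathbb{Z}_\ell) \otimes_{\mathbb{Z}_\ell} \mathbb{Q}_\ell \simeq \h_c^i(\X_{n,d}, \mathbb{Q}_\ell)$ for every $i$, so each integral cohomology group is a $\mathbb{Z}_\ell \Gl_n(q)$-lattice sitting inside the rational cohomology. From \cite{cohomology-of-DL_Dudas}, the rational cohomology in each degree is a single irreducible unipotent character $\chi_\lambda$ with $\lambda = \mu * x$ (respectively $\mu*(x+1)$, or the trivial character in top degree) equipped with a known Frobenius eigenvalue of weight $q^x$ (resp.\ $q^{x+1}$, $q^n$); this immediately accounts for the character and the $\langle q^x\rangle$ weight twist appearing in the stated formula.

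The remaining task is to pin down the lattice, not just its character. Here I would rely on the definition of $\nabla_{\mathbb{Z}_\ell}(\lambda)$ given in \ref{def:Xnd}, which constructs a specific $\mathbb{Z}_\ell \Gl_n(q)$-lattice (typically via integral Harish-Chandra induction from a trivial module of a standard Levi subgroup) whose $\mathbb{Q}_\ell$-character is $\chi_\lambda$. Under the hypothesis $\ell \mid \Phi_m(q)$ with $m>d$ and $m>n-d+1$, the first theorem ensures that the principal unipotent $\ell$-block has cyclic defect; by the classical theory of blocks with cyclic defect, the indecomposable lattices with a prescribed character are classified by paths in the Brauer tree, reducing the identification to a finite, combinatorial check. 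To carry it out, I would exploit the recursive geometric structure already used in the proof of the torsion-free theorem (an open/closed stratification of $\X_{n,d}$ or a Bonnaf\'e--Rouquier style geometric induction) relating the cohomology of $\X_{n,d}$ to that of smaller varieties $\X_{n',d'}$; this is compatible with Harish-Chandra induction and so transports the $\nabla_{\mathbb{Z}_\ell}$-structure. Induction on $n$, with the base cases supplied by Lusztig's computation \cite{Lusztig-coxeter} for $d=n$ and Digne--Michel--Rouquier \cite{DMR} for $d=n-1$, then gives the formulas in all remaining cases.

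The main obstacle I anticipate is precisely the lattice identification step, not the numerical character computation. Over $\mathbb{Q}_\ell$ the answer is essentially read off from Dudas's formula, but verifying that the integral cohomology lattice matches the specific $\nabla_{\mathbb{Z}_\ell}(\mu*x)$ chosen in \ref{def:Xnd} requires tracking socle/head structure through the recursion and ruling out other lattices in the same rational representation. The cyclic defect hypothesis is crucial here, since it forces the Brauer tree to be a line and limits the possible lattices to a handful distinguished by easily-computable invariants (composition factors of the reduction mod $\ell$ and the Frobenius action), making the comparison feasible.
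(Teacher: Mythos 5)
Your high-level skeleton is partly right (torsion-free $+$ UCT gives a lattice inside Dudas's rational cohomology; decompose by eigenvalues of $F$; the $m$-core lemma handles most summands by trivial defect; the principal block needs Brauer-tree input), but there are two real problems.

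First, you have the wrong picture of what $\nabla_{\mathbb{Z}_\ell}(\lambda)$ is. It is \emph{not} an integral Harish-Chandra induction from a Levi; by Proposition \ref{nabla} it is the unique $\mathbb{Z}_\ell G$-lattice with character $\nabla(\lambda)$ such that $\operatorname{soc}\bigl(\mathbb{F}_\ell\otimes_{\mathbb{Z}_\ell}\nabla_{\mathbb{Z}_\ell}(\lambda)\bigr)\simeq S_{\mathbb{F}_\ell}(\lambda)$, constructed as $a_\lambda P\cap P$ inside the projective lift $P$ of $S_{\mathbb{F}_\ell}(\lambda)$. Since the whole point of the theorem is to identify a specific lattice, misidentifying the target is a genuine gap: your argument never actually verifies the socle condition that characterizes $\nabla_{\mathbb{Z}_\ell}(\mu * x)$. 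For the summands where $\mu*x$ is an $m$-core (Lemma \ref{nomhook}) this is harmless because the block has trivial defect and any lattice with that character works. But for the generalized $q^n$-eigenspace — the only piece lying in a block with nontrivial defect — this is exactly where the hard work is, and you offer no mechanism to verify the socle condition there.

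Second, the mechanism you do offer (induction on $n$ via the open/closed recursion, claiming it "transports the $\nabla_{\mathbb{Z}_\ell}$-structure") is not what the paper does and is not obviously sufficient: the recursion is used in the paper only to prove torsion-freeness, and you would have to check that it transports socles of $\ell$-reductions, which is not automatic. The paper's actual argument for the principal-block summand is direct and does not need the induction. Writing $M:=\h^{n-m}(C_{n,d}^{\mathbb{Z}_\ell})$, one observes that the $q^n$-eigenspace $D$ of $F$ on the mod-$\ell$ complex is a \emph{perfect} complex (Proposition \ref{boun1}) whose cohomology is concentrated in two degrees, with $\h^{\mathrm{top}}(D)\simeq S_{\mathbb{F}_\ell}(n)$; by Lemma \ref{stabomega}, this forces $M\otimes_{\mathbb{Z}_\ell}\mathbb{F}_\ell\simeq\Omega^{n-d+m}S_{\mathbb{F}_\ell}(n)$ in the stable category. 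One then walks around the Brauer tree of the principal $\Phi_m$-block (a line, by cyclic defect) to compute $\Omega^{n-d+m}\mathbb{F}_\ell$ explicitly, reads off its socle $S_{\mathbb{F}_\ell}(\mu*(n-m))$, and concludes $M\simeq\nabla_{\mathbb{Z}_\ell}(\mu*(n-m))$. The perfectness-plus-Heller-operator step is the essential idea that pins down the lattice, and it is missing from your proposal; without it, "classified by paths in the Brauer tree" does not tell you which path.
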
	
Under the same assumptions on $\ell$, we can find a representative of the cohomology complex of $\X_{n,d}$ as a bounded complex of finitely generated projective modules. The explicit description of this complex allows us to show that $\R\Gamma_c(\X_{n,d},\mathbb{F}_\ell)$ is a partial-tilting complex, hence satisfies  the self-orthogonality condition (see \ref{result-partial_tilting}):
\begin{mthm}
		If  $\ell \mid \Phi_m(q)$, where $m>d$, and  $m>n-d+1$, and $m>6$, then for all non-zero integers $a$, $$\Hom_{D^b(\mathbb{F}_\ell \Gl_n(q)\mmod)}(\R \Gamma_c \big(\X_{n,d},\mathbb{F}_\ell),\R \Gamma_c(\X_{n,d},\mathbb{F}_\ell)[a] \big)=0.$$
\end{mthm}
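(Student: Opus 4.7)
The plan is to pass to $\mathbb{Z}_\ell$-coefficients---where the first theorem provides torsion-freeness---and reduce the partial-tilting statement to an explicit computation of $\Ext$-groups between the lattices $\nabla_{\mathbb{Z}_\ell}(\lambda)$ produced by the second theorem. Under the standing hypotheses the principal unipotent $\ell$-block of $\Gl_n(q)$ has cyclic defect group, so this computation reduces to a tractable combinatorial problem on its Brauer tree.

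Set $C:=\R\Gamma_c(\X_{n,d},\mathbb{Z}_\ell)$. Since $C$ is perfect, we have
\[
\R\Hom_{\mathbb{F}_\ell \Gl_n(q)}\bigl(C\otimes^{L}\mathbb{F}_\ell,\,C\otimes^{L}\mathbb{F}_\ell\bigr)\simeq \R\Hom_{\mathbb{Z}_\ell \Gl_n(q)}(C,C)\otimes^{L}_{\mathbb{Z}_\ell}\mathbb{F}_\ell,
\]
and a universal-coefficient argument reduces the desired vanishing to showing that $\Hom_{D^b(\mathbb{Z}_\ell \Gl_n(q)\mmod)}(C,C[a])=0$ for every $a\neq 0$, together with the torsion-freeness of $\Hom_{D^b(\mathbb{Z}_\ell\Gl_n(q)\mmod)}(C,C)$. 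To tackle this I would use the convergent hyperext spectral sequence
\[
E_2^{p,q}=\prod_i\Ext^{p}_{\mathbb{Z}_\ell \Gl_n(q)}\bigl(\h^i(\X_{n,d},\mathbb{Z}_\ell),\,\h^{i+q}(\X_{n,d},\mathbb{Z}_\ell)\bigr)\Longrightarrow\Hom_{D^b}(C,C[p+q]),
\]
and kill its off-diagonal entries on the $E_2$-page. By the second theorem each $\h^i(\X_{n,d},\mathbb{Z}_\ell)$ is a single indecomposable lattice $\nabla_{\mathbb{Z}_\ell}(\lambda_i)$ on which the Frobenius acts by a scalar $q^{f(i)}$; the hypotheses $m>d$ and $m>n-d+1$ force every $\lambda_i$ to lie in the principal $\ell$-block $B_0$, whose Brauer tree is known to be linear. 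Each off-diagonal $\Ext^p_{\mathbb{Z}_\ell\Gl_n(q)}(\nabla(\lambda_i),\nabla(\lambda_j))$ can then be read off directly from the tree.

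Two ingredients will then close the argument. First, Frobenius-equivariance of every morphism in $D^b$ forces any nonzero contribution to satisfy the congruence $m\mid f(i+q)-f(i)$; second, the $\Ext$ between two such $\nabla$-lattices vanishes unless the tree-distance of the corresponding partitions is compatible with the homological degree. The main obstacle is the ensuing combinatorial check: for each pair $(i,i+q)$ with $p+q\neq 0$, one must verify that either the Brauer-tree distance kills $\Ext^p$, or the congruence on $f$ fails. The permissible values of $f$ prescribed by the second theorem lie in $\{0,1,\dots,n\}$, so the differences $f(i+q)-f(i)$ lie in a bounded interval of small integers; the condition $m>6$ is exactly the threshold that excludes any such difference from being a nonzero multiple of $m$ whenever the tree-distance would otherwise permit a surviving $\Ext$. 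Carrying out this case analysis, and ensuring that the bound $m>6$ is sufficient (and essentially sharp), is the technical heart of the proof.
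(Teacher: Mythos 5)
Your reduction to $\mathbb{Z}_\ell$ and the idea of attacking the problem with the hyperext spectral sequence is a reasonable starting move, but there are two substantive gaps that would make the argument as outlined fail.

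First, the claim that the hypotheses ``force every $\lambda_i$ to lie in the principal $\ell$-block'' is false, and is essentially the opposite of what happens. The paper's Lemma \ref{nomhook} shows that for $m>d$ and $m>n-d+1$ every $\mu * x$ is an $m$-core \emph{except} when $x=n-m$. An $m$-core labels a unipotent character in a block of trivial defect, so almost all of the cohomology groups $\h^i_c(\X_{n,d},k)$ are simple \emph{projective} modules, each in its own defect-zero block. Only the two groups with Frobenius eigenvalue congruent to $q^{2n-d-1}$ mod $\ell$ --- the trivial module $\nabla_k(n)$ in top degree and $\nabla_k(\mu*(n-m))$ in degree $\beta=3n-d-1-m$ --- lie in the principal block. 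The defect-zero summands contribute nothing to any $\Ext^p$ with $p>0$, and the different eigenvalues separate them, so that part is fine; but you cannot treat the whole cohomology as living on a single linear Brauer tree.

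Second, and more seriously, the strategy ``kill its off-diagonal entries on the $E_2$-page'' cannot succeed for the principal-block contribution. The block $B_0$ has non-trivial cyclic defect, so $\Ext^p_{kG}(k,k)$ and $\Ext^p_{kG}(\nabla_k(\mu*(n-m)),\,k)$ are periodic and non-zero in infinitely many degrees $p$; the $E_2$-page therefore has non-zero entries on every antidiagonal $p+q=a$. Moreover the Frobenius-congruence ``$m\mid f(i+q)-f(i)$'' cannot separate these two cohomology groups, because they lie in the \emph{same} generalized eigenspace of $F$ (their Frobenius eigenvalues are congruent mod $\ell$ by design). So the vanishing of $\Hom_{D^b}(C,C[a])$ for $a\neq 0$ has to come from the differentials of the spectral sequence cancelling these $E_2$ entries, not from their vanishing --- a qualitatively different and harder computation that your outline does not address. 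This is exactly why the paper instead produces, in Proposition \ref{Rickard3}, an explicit representative of the principal-block summand $D=\R\Gamma_c(\X_{n,d},k)_{q^{2n-d-1}}$ as a truncated minimal projective resolution of $k$ along the Brauer tree, then in Lemma \ref{classdiag} classifies the possible non-zero chain maps $E\to E[a]$, and in Theorem \ref{partit} constructs explicit null-homotopies for each class (with a separate treatment of $a=\pm 1$ using the nilpotence degree of the loop map $d_m\in\End(P_m)$). Finally, note that in the paper the hypothesis $m>6$ enters through the torsion-freeness theorem \ref{torsion-free} (to avoid $(n,d)=(6,4)$ in the induction, via Lemma \ref{simproj}), not as a bound controlling the combinatorics of the $\Ext$-groups; once torsion-freeness is available the partial-tilting argument for the principal block has no further dependence on $m>6$.
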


\section{Preliminaries}
\subsection{Some homological algebra}
Given $A$ a ring with unit, we denote by $A\mmod$ the category of finitely generated left $A$-modules and by $A\mMod$  the category of left $A$-modules. If $\mathcal{A}= A\mMod$ or $A \mmod$, we denote  by $K(\mathcal{A})$ the corresponding homotopy category and by $D(\mathcal{A})$ the corresponding derived category. We write $K^b(\mathcal{A})$ and $D^b(\mathcal{A})$ respectively for the full subcategories of $K(\mathcal{A})$ and $D(\mathcal{A})$ whose objects are bounded. We can define the usual derived bifunctors $\R \Hom_{A}(-,-)$ and $- \bigotimes_{A} -$
since $A \mmod$ and $A \mMod$ have enough projective objects. 

Recall that the stable category of finitely generated $A$-modules  which is denoted by $A\stab$ is defined as the category such that:
	\begin{itemize}
		\item The objects are finitely generated $A$-modules.
		\item For arbitrary objects $X$ and $Y$
		\begin{center}
			$\Hom_{A\stab}(X,Y)=\Hom_{A\mmod}(X,Y)/\approx$
		\end{center}	
		\noindent
		where $f \approx g$ if and only if
		the morphism $f-g$ factors through a projective module.
	\end{itemize}	
By definition, any projective module is a zero object in the stable category. We can endow $A\stab $ with  the structure of triangulated category. The suspension is given by the Heller operator which we recall the construction now. If $A$ is a finite dimensional $A$-algebra over field $R$ then any finitely generated $A$-module has a finitely generated projective cover ($A$ is said to be semiperfect). In particular, if $A$ is a symmetric algebra consequently any finitely generated $A$-module admits an injective resolution.
\smallskip

Let $M$ be a finitely generated $A$-module. The Heller operator
	 which is denoted by $\Omega$ is defined by
	\begin{center}
		$\Omega M:= \kerr(P_{M} \twoheadrightarrow M)$
	\end{center}	
	where $P_{M}$ is a projective cover of $M$. Then, one can define inductively $\Omega^{n}M:=\Omega(\Omega^{n-1}M)$ for any $n>1$. By convention $\Omega^{0}M$ is defined as the minimal submodule of $M$ such that $M / \Omega^{0}M$ is a projective module.
\begin{prop}[{\cite[Proposition 1.23]{OLDL}}] \label{stabb} 
	Let $M$ and $N$ be finitely generated  $A$-modules. 
	\begin{itemize}
		\item 	For every $n \geq1$
		$$
		\Hom_{A\stab}(\Omega^n M,N) \simeq \Ext_{A}^n(M,N).
		$$
		\item Let $M$ and $N$ be two indecomposable non-projective modules. If $M \simeq N$ in $A\stab$ then $M \simeq N$ in $A\mmod$.
	\end{itemize}
\end{prop}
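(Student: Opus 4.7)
The plan is to treat the two parts independently, both of which are standard facts about stable module categories over self-injective (in particular symmetric) algebras, which is the setting made explicit in the paragraphs preceding the proposition.

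For the first assertion, the plan is an induction on $n$ using dimension shifting. For $n\geq 2$, the short exact sequence $0\to\Omega^{n-1}M\to P\to\Omega^{n-2}M\to 0$ coming from a projective cover gives, upon applying $\Hom_A(-,N)$, a long exact sequence in $\Ext^\bullet$; since $P$ is projective, $\Ext^i_A(P,N)=0$ for $i\geq 1$, yielding $\Ext^n_A(M,N)\simeq\Ext^{n-1}_A(\Omega M,N)\simeq\cdots\simeq\Ext^1_A(\Omega^{n-1}M,N)$. It therefore suffices to establish the base case $n=1$, namely $\Hom_{A\stab}(\Omega M,N)\simeq\Ext^1_A(M,N)$. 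I would build the map explicitly: given $f\colon\Omega M\to N$, form the pushout of $0\to\Omega M\to P_M\to M\to 0$ along $f$ to produce a class in $\Ext^1_A(M,N)$; conversely, given an extension $0\to N\to E\to M\to 0$, lift the projective cover $P_M\twoheadrightarrow M$ to $P_M\to E$ (using projectivity of $P_M$), and the induced map on kernels $\Omega M\to N$ provides an inverse. The key well-definedness check is that $f$ gives the zero class iff $f$ factors through a projective. One direction is immediate (if $f$ extends to $P_M$, the pushout splits); the other uses that over a self-injective algebra every projective is injective, so a factorization $\Omega M\to Q\to N$ through a projective $Q$ extends along $\Omega M\hookrightarrow P_M$, yielding an extension of $f$ to $P_M$ and hence a splitting of the pushout.

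For the second assertion, I would lift a stable isomorphism to maps $f\colon M\to N$ and $g\colon N\to M$ in $A\mmod$ whose classes are inverse in $A\stab$, so that $\id_M-gf$ factors through a projective $Q$, say via $M\xrightarrow{\delta}Q\xrightarrow{\gamma}M$. Since $M$ is indecomposable and finitely generated over a finite-dimensional algebra, $\End_A(M)$ is a local ring. If $\gamma\delta$ were not in $\Rad(\End_A(M))$, it would be a unit, so $\id_M$ would factor through $Q$, making $M$ a direct summand of $Q$ and hence projective --- contradicting the hypothesis. Thus $\gamma\delta\in\Rad(\End_A(M))$, so $gf=\id_M-\gamma\delta$ is a unit in $\End_A(M)$, i.e.\ an automorphism of $M$. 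The symmetric argument shows $fg$ is an automorphism of $N$. A morphism admitting both a left and a right inverse is an isomorphism, so $f\colon M\xrightarrow{\sim}N$ in $A\mmod$.

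The only genuinely delicate step is the well-definedness claim in the base case of part (i): namely, that a map $\Omega M\to N$ factoring through a projective produces the split extension. This is the place where the self-injectivity of $A$ (guaranteed by the symmetry assumption recalled just above the proposition) is essential; without it, projectives need not be injective and the extension argument fails. Everything else reduces to standard homological bookkeeping or to the locality of endomorphism rings of indecomposables of finite length.
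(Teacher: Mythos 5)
Your proposal is correct, but note that the paper does not actually prove this proposition: it is quoted verbatim from the literature (\cite[Proposition 1.23]{OLDL}), so there is no internal argument to compare against. What you supply is the standard self-contained proof, and it holds up: for (i), dimension shifting along $0\to\Omega^{k}M\to P\to\Omega^{k-1}M\to 0$ reduces to the base case $\Ext^1_A(M,N)\simeq\Hom_{A\stab}(\Omega M,N)$, which you establish via the pushout/lifting correspondence; crucially, you identify correctly that the only nontrivial point is matching ``$f$ extends to $P_M$'' with ``$f$ factors through a projective'', and that this is exactly where self-injectivity (available here since the ambient setting is a finite-dimensional symmetric algebra, as recalled just before the statement) is used to extend a factorization $\Omega M\to Q\to N$ along $\Omega M\hookrightarrow P_M$. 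For (ii), the Fitting-lemma argument is the expected one: $\End_A(M)$ is local, $\gamma\delta$ cannot be a unit without forcing $M$ to be a summand of a projective, hence $gf=\id_M-\gamma\delta$ is invertible, and symmetrically $fg$ is invertible, so $f$ is an isomorphism. The only items you wave at as ``bookkeeping'' (independence of the chosen lift $P_M\to E$, additivity of the correspondence) are genuinely routine, so nothing essential is missing; in effect your write-up gives the reader the proof the paper outsources to \cite{OLDL}.
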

Let $M$ be a finitely generated $A$-module. We can also define $\Omega^{-1}M:=\coker(M \hookrightarrow I_{M})$ where $I_{M}$ is the injective hull of $M$. 
\begin{prop}
	Let  $0 \longrightarrow X \longrightarrow Y \longrightarrow Z \longrightarrow 0  $ be an exact sequence in $A\mmod$. This yields an exact sequence 
	\[
	\xymatrix{ X  \ar[r] & Y  \ar[r] & Z \ar[r] & \Omega^{-1} X &}
	\]
	in $A\stab$.  If $\Omega^{-1}$ is considered as the shift functor in a triangle, it gives us the structure of a triangulated category on $A\stab$.  
\end{prop}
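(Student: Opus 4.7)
The plan is to follow Happel's classical construction, exploiting that a symmetric algebra $A$ is Frobenius, so every finitely generated module admits both a projective cover and an injective hull and projective modules coincide with injective ones. First, I would make precise the connecting morphism appearing in the statement. Given a short exact sequence $0 \to X \xrightarrow{u} Y \xrightarrow{v} Z \to 0$ in $A\mmod$, fix an injective hull $\iota: X \hookrightarrow I_X$ (so $I_X/X = \Omega^{-1}X$) and form the pushout of $u$ along $\iota$. This produces a commutative diagram whose lower row is a short exact sequence $0 \to I_X \to P \to Z \to 0$ with $P/Y \cong I_X/X = \Omega^{-1}X$, yielding a canonical morphism $h: Z \to \Omega^{-1}X$. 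Since $A$ is symmetric, $I_X$ is projective, so the class of $h$ in $A\stab$ is independent of the choice of injective hull: any two choices differ by an automorphism of $I_X$, and any map through $I_X$ factors through a projective.

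Next, I would declare a triangle in $A\stab$ to be distinguished if it is isomorphic to a triangle of the form $X \to Y \to Z \to \Omega^{-1}X$ arising from some short exact sequence as above. Under this definition, (TR1) and (TR3) are straightforward: the identity triangle comes from the trivial sequence $0 \to X \xrightarrow{\mathrm{id}} X \to 0 \to 0$; any morphism $f: X \to Y$ in $A\stab$ lifts to $A\mmod$ and can be completed to a triangle by applying the pushout construction to $(f,-\iota): X \to Y \oplus I_X$; and the morphism axiom (TR3) follows from the universal property of pushouts applied to a morphism between two short exact sequences.

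The rotation axiom (TR2) and the octahedral axiom (TR4) are the substantive steps. For (TR2), I would show that $Y \to Z \to \Omega^{-1}X \xrightarrow{-\Omega^{-1}u} \Omega^{-1}Y$ is distinguished by extracting the short exact sequence $0 \to Y \to P \to \Omega^{-1}X \to 0$ from the pushout square above, and then recomputing its connecting morphism via a second pushout against the injective hull of $Y$, identifying the result with $-\Omega^{-1}u$ in $A\stab$. For (TR4), the octahedral diagram is produced by iterating the pushout construction on a composable pair $X \to Y \to Z$ and gluing the three resulting standard triangles by means of the snake lemma, which automatically delivers the fourth edge.

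The main obstacle is the bookkeeping for (TR2): one has to match two \emph{a priori} different connecting morphisms (that obtained by the rotation and that of the newly constructed short exact sequence) in $A\stab$, i.e.\ up to maps factoring through projectives, and fix sign conventions compatible with the shift functor $\Omega^{-1}$. The decisive simplification throughout is that any morphism factoring through an injective $=$ projective module is zero in $A\stab$, which collapses the correction terms produced by the pushout diagram chases and makes the verification tractable.
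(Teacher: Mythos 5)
The paper itself offers no proof of this proposition: it is stated as a known fact, namely Happel's classical theorem that the stable module category of a self-injective (in particular symmetric) algebra is triangulated with $\Omega^{-1}$ as suspension, so there is no in-paper argument to compare against. Your sketch follows exactly that classical route and is essentially correct: pushout of the short exact sequence along the injective hull $X\hookrightarrow I_X$, distinguished triangles declared to be those stably isomorphic to the standard ones, and all verifications collapsing because maps factoring through injective $=$ projective modules vanish in $A\stab$. You also correctly make explicit the hypothesis (self-injectivity of $A$) that the paper leaves implicit but genuinely needs. Two small points of care if you write this out in full. First, the connecting morphism $Z\to\Omega^{-1}X$ is not produced by the pushout square alone: you must either split the sequence $0\to I_X\to P\to Z\to 0$ (possible precisely because $I_X$ is injective) and compose the section with $P\to P/Y\simeq \Omega^{-1}X$, or equivalently extend $\iota$ along $u$ to a map $Y\to I_X$ and pass to cokernels; its stable well-definedness then rests on the independence of that choice of splitting or extension (two choices differ by a map factoring through $I_X$), not merely on the uniqueness of the injective hull. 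Second, for (TR4) the usual mechanism is not the snake lemma but the third isomorphism theorem applied to a composable pair of monomorphisms, obtained after replacing arbitrary lifts of the two morphisms by monomorphisms via adding injective direct summands; with that adjustment your outline matches the standard argument.
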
	
Recall that a perfect complex of $A$-modules is a complex which is quasi-isomorphic to a bounded complex of finitely generated projective modules. 

Using \cite{Rickard-stab}, 
the functor  $A\mmod \longrightarrow D^{b}(A\mmod)$ induces an equivalence of triangulated categories
\begin{equation}\label{perfect_complex}
A\stab \simto  D^{b}(A\mmod) /A\perf.
\end{equation}	
where $A\perf$ is the category of perfect complexes of $A$-modules.
\begin{lemma} \label{stabomega} \label{omegarick}
	Let $C$ be a perfect complex such that $\h^{i}(C)=0$ for $i \neq s,t$ where $s>t$. Then $\h^{t}(C) \simeq \Omega^{s-t+1}\h^{s}(C)$ in $A\stab$.
\end{lemma}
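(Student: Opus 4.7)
The plan is to exploit the equivalence \eqref{perfect_complex}: since $C$ is perfect, it becomes a zero object in $D^b(A\mmod)/A\perf$, and the desired relation in $A\stab$ will come out of a single truncation triangle for $C$ read in this quotient.

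First, I would form the standard cohomological truncation triangle
\[
\tau_{\leq t}C \longrightarrow C \longrightarrow \tau_{>t}C \longrightarrow (\tau_{\leq t}C)[1]
\]
in $D^b(A\mmod)$. Because $\h^i(C)$ vanishes outside $\{s,t\}$, the two truncations are quasi-isomorphic to single modules placed in one degree: $\tau_{\leq t}C \simeq \h^t(C)[-t]$ and $\tau_{>t}C \simeq \h^s(C)[-s]$. Hence one obtains a distinguished triangle
\[
\h^t(C)[-t] \longrightarrow C \longrightarrow \h^s(C)[-s] \longrightarrow \h^t(C)[-t+1].
\]

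Next, I would pass to the Verdier quotient $D^b(A\mmod)/A\perf$. Since $C\in A\perf$, its image is a zero object in the quotient, so the triangle above degenerates to an isomorphism
\[
\h^s(C)[-s] \;\simeq\; \h^t(C)[-t+1],
\]
equivalently $\h^t(C) \simeq \h^s(C)[-(s-t+1)]$ in $D^b(A\mmod)/A\perf$. Through Rickard's equivalence \eqref{perfect_complex}, the shift $[1]$ on the right-hand side corresponds to $\Omega^{-1}$ on the stable category (as recalled just before the lemma), so $[-(s-t+1)]$ corresponds to $\Omega^{s-t+1}$, and the isomorphism reads $\h^t(C) \simeq \Omega^{s-t+1}\h^s(C)$ in $A\stab$, as claimed.

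There is no real obstacle here; the only point requiring care is matching conventions: one must check that Rickard's equivalence sends the triangulated shift of $D^b/A\perf$ to $\Omega^{-1}$ (not $\Omega$), and that the sign in the exponent of $\Omega$ comes out right after taking the inverse shift. Once the bookkeeping on $[\,\cdot\,]$ versus $\Omega^{\pm\,\cdot\,}$ is fixed, everything reduces to the two-term truncation triangle and the fact that perfect complexes are killed in the quotient.
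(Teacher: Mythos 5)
Your proof is correct and follows essentially the same route as the paper: both use the two-term cohomological truncation triangle, pass to the Verdier quotient $D^b(A\mmod)/A\perf$ where the perfect complex $C$ dies, and invoke the Rickard equivalence to translate the resulting shift into powers of $\Omega$. The only cosmetic difference is that the paper shifts the triangle by $[s]$ before passing to the quotient, whereas you pass to the quotient first and then do the shift bookkeeping; the content is identical.
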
	
\begin{proof}
	We have a distinguished triangle  in $D^{b}(A\mmod)$
	\[
	\xymatrix{ \h^{t}(C)[-t] \ar[r] &  C \ar[r] & \h^{s}(C)[-s] \ar@{~>}[r] & . }
	\] 
	We can shift it to obtain the following triangle
	\[
	\xymatrix{ C[s] \ar[r] &  \h^{s}(C)[0] \ar[r] & \h^{t}(C)[s-t+1] \ar@{~>}[r] & . }
	\] 
	We conclude using the image of this triangle in $D^b(A\mmod)/A\perf$.
\end{proof}	
 The following theorem, stated by Rickard, gives conditions for a complex to induce a derived equivalence.
\begin{thm}[{Rickard, \cite[Chapter 1]{Keller-der2}}] \label{rick1}
	Let $K$ be a field. Let $A$ and $B$ be finite dimensional $K$-algebras. 
	The following conditions are equivalent:
	\begin{enumerate}
		\item  $ D^b(A\mmod) \simeq D^b(B\mmod)$.
		\item There exists a complex $T$ of $B$-modules such that the following conditions hold
		\begin{itemize}
			\item $T$ is perfect,
			\item $T$ generates $D^b(B\mmod)$ as a triangulated category with infinite direct sums,
			\item $\Hom_{D^{b}(B\mmod)}(T, T[n]) = 0$ for every non-zero integer $n $, 
			\item  $\End_{D^{b}(B\mmod)}(T)\simeq A$ as $K$-algebra.
		\end{itemize}
	\end{enumerate}
\end{thm}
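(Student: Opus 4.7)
The plan is to establish the equivalence of the two conditions by treating the implications separately. The forward direction $(1)\Rightarrow(2)$ is formal, while $(2)\Rightarrow(1)$ constitutes the substantive part of the theorem.

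For $(1)\Rightarrow(2)$, I would assume a triangulated equivalence $F : D^b(A\mmod) \simto D^b(B\mmod)$ and set $T := F(A)$, where $A$ is viewed as a stalk complex in degree zero. The four required properties of $T$ are then inherited from the analogous properties of $A$ as an object of $D^b(A\mmod)$: $A$ is perfect (being projective and finitely generated over itself), it generates $D^b(A\mmod)$ as a triangulated category closed under infinite direct sums, it satisfies $\Hom_{D^b(A\mmod)}(A, A[n]) = 0$ for $n \neq 0$ since it is concentrated in a single degree, and $\End_{D^b(A\mmod)}(A) \cong A$ as a $K$-algebra. Any triangulated equivalence preserves each of these properties, so they pass to $T$.

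For $(2)\Rightarrow(1)$, I would follow the DG-enhancement approach championed by Keller. First, replace $T$ by a quasi-isomorphic K-projective complex $P$ of $B$-modules, and form the endomorphism DG-algebra $\mathcal{E} := \End_B^\bullet(P)$. The self-orthogonality hypothesis together with $\End_{D^b(B\mmod)}(T) \cong A$ forces the cohomology of $\mathcal{E}$ to be concentrated in degree zero and equal to $A$; hence $\mathcal{E}$ is quasi-isomorphic as a DG-algebra to $A$ placed in degree zero, yielding $D(\mathcal{E}) \simeq D(A)$. The functor $\R\Hom_B(P, -) : D(B\mMod) \to D(\mathcal{E})$ then sends $P$ to $\mathcal{E}$ and, by self-orthogonality, is fully faithful on the thick triangulated subcategory generated by $T$. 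The generation assumption ensures that this subcategory exhausts $D(B\mMod)$, so after restricting to compact objects (equivalently, to perfect complexes) on each side one obtains the desired $D^b(A\mmod) \simeq D^b(B\mmod)$.

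The main obstacle is the passage from the ``big'' derived category $D(B\mMod)$ down to the bounded derived categories of \emph{finitely generated} modules. This is where perfectness of $T$ enters crucially: it ensures that $\R\Hom_B(P, -)$ commutes with arbitrary direct sums, hence preserves compact objects, and therefore restricts well to the bounded subcategories with finitely generated cohomology on both sides. Keeping track of this bookkeeping --- and verifying that the constructed quasi-inverse also restricts appropriately --- is the delicate point, and is what is packaged cleanly in \cite[Chapter 1]{Keller-der2}. For the applications in this paper it is enough to invoke Rickard's criterion as a black box, since the complex $T$ we will produce from $\R\Gamma_c(\X_{n,d}, \mathbb{F}_\ell)$ will be verified to satisfy exactly the four conditions listed in (2).
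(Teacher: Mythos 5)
The paper does not prove this theorem: it is stated as a cited result (attributed to Rickard, with a pointer to Keller's notes), so there is no ``paper's own proof'' to compare against. Evaluating your sketch on its own terms: the overall shape is correct, and since you chose to follow the DG-enhancement route, you are in fact aligned with the cited reference (Keller's proof via derived Morita theory for DG algebras, as opposed to Rickard's original 1989 argument).

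Two points deserve a caveat. First, in the forward direction you assert that $A$ generates $D^b(A\mmod)$; strictly speaking, the thick subcategory of $D^b(A\mmod)$ generated by $A$ is $A\perf = K^b(A\mproj)$, which equals $D^b(A\mmod)$ only when $A$ has finite global dimension. This imprecision is inherited from the loose formulation of the generation condition in the statement itself (``generates $D^b(B\mmod)$ as a triangulated category with infinite direct sums'' --- a category which in fact lacks infinite direct sums), so it is a matter of reading the statement charitably rather than a genuine error in your argument. Second, there is the perennial $\mathrm{op}$ bookkeeping: $\End_{D^b(A\mmod)}(A)$ is $A^{\mathrm{op}}$, not $A$, under the usual conventions, and the correct statement of the theorem involves this twist; you reproduce the statement's own omission. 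Beyond these, your reduction in $(2)\Rightarrow(1)$ --- truncating the endomorphism DG algebra $\mathcal{E}$ to show it is quasi-isomorphic to its $\h^0 \cong A$, using self-orthogonality, and then appealing to compactness of the perfect generator to control the restriction from the unbounded derived categories to $D^b(\textsf{-}\mathsf{mod})$ --- is exactly the standard argument, and you are right to flag the last restriction step as the delicate part that Keller's exposition handles. For the purposes of this paper the theorem is used as a black box anyway, so your level of detail is appropriate.
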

In the case where $B$ is a unipotent block of a finite reductive group, Brou\'{e} conjectured that $T$ can be chosen to be the cohomology complex of a suitably chosen Deligne--Lusztig variety.
\subsection{Finite reductive groups and Deligne--Lusztig varieties}
Let  $\ell \neq p$ be a prime number. Throughout this paper, we will work with an $\ell$-modular system $(K,\mathcal{O},k)$ such that $K$ is a finite extension of the field of $\ell$-adic numbers $\mathbb{Q}_\ell$, $\mathcal{O}$ is the integral closure of the ring of $\ell$-adic integers in $K$ and $k$ is the residue field of the local ring $\mathcal{O}$. We also assume that $K$ is large enough, that is, it contains all primitive $|G|$-th roots of unity. 
\smallskip

	Let $\mathbb{G}$ be a connected algebraic group over $\overline{\mathbb{F}}_p$, and   $F:\mathbb{G} \longrightarrow \mathbb{G}$ be a Frobenius endomorphism. Then $G:=\mathbb{G}^F$ (the set of fixed points of $\mathbb{G}$)  is called a finite reductive group or a finite group of Lie type.
\smallskip

Let $\bbT\in \bbB$ be a maximal torus of $\bbG$ contained in a  Borel subgroup, $W :=N_{\bbG}(\bbT) /\bbT$ and $S$ be a set of simple reflections associated to $\bbB$.
 Let $I$  be a subset of $S$ and  $\mathbb{P}_{I}$ be a standard parabolic subgroup of $\mathbb{G}$ containing  $\bbU$, and $\mathbb{U}_{I}$ be its unipotent radical. Let $\mathbb{L}_I$ be the standard Levi subgroup of $\mathbb{P}_I$ containing $\bbT$.
Let  $W_{I}$ be the  parabolic subgroup of $W$ generated by $I$ which is the Weyl group of $\bbL_I$.  Assume that $w$ is $I$-reduced (which means that $w$ has minimal length in the coset $W_I w$) and that $^{w}I = I$ and $\dot{w}$ is the  representative of $w \in W$ in $N_{\bbG}(\bbT)$. Then the parabolic Deligne--Lusztig varieties associated with the pair $(I, w)$ are defined by
\begin{align*}
&&	\tilde{\X}(I,\dot{w}F):&= \{g\mathbb{U}_I  \in \mathbb{G}/\mathbb{U}_I  \mid g^{-1}F(g) \in \mathbb{U}_I \dot{w}^F \mathbb{U}_{I}\}  &&\\
\text{and} &&  \X(I,wF):&=\{g \bbP_I  \in \mathbb{G} /\bbP_I  \mid g^{-1}F(g) \in \mathbb{P}_I w ^F \mathbb{P}_{I}\}.&&
\end{align*}
The finite group $G=\mathbb{G}^F$ acts on $\X(I,wF)$ and $\tilde{\X}(I,wF)$ by left multiplication and $\mathbb{L}_I^{\dot{w}F}$ acts on $\tilde{\X}(I,wF)$ by right multiplication. 
 The varieties $\X(I,wF)$ and $\tilde{\X}(I,\dot{w}F)$ are quasi-projective varieties and their dimension is $\ell(w) := \dim \mathbb{U}_I - \dim(\mathbb{U}_I \cap ^{wF}\mathbb{U}_I ) $  (see \cite[page 22]{parabolicDM}) and the map $\mathbb{G}/ \mathbb{U}_I \longrightarrow \mathbb{G}/ \mathbb{P}_I $ induces a $G$-equivariant isomorphism: 
\begin{equation} \label{eq:DLquo}
\tilde{\X}(I,\dot{w}F)/\mathbb{L}_I^{\dot{w}F} \simeq \X(I,wF).
\end{equation}	

\subsection{Cohomology of Deligne--Lusztig varieties}
\label{dlind.}
Let $R$ be any ring among the modular system $(K,\mathcal{O},k)$. The cohomology complexes of Deligne--Lusztig varieties $\X(I,wF)$ and $\tilde{\X}(I,\dot{w}F)$ with coefficients in $R$ are denoted by $\R\Gamma_c(\X(I,wF),R)$ and $\R\Gamma_c(\tilde{\X}(I,\dot{w}F),R)$ respectively. Recall from \cite{Deligne-etale} that $\R\Gamma_c(\X(I,wF),R)$ (resp. $\R\Gamma_c(\tilde{\X}(I,\dot{w}F),R)$) is a bounded complex of finitely generated $RG$-modules  (resp. $(RG,R\mathbb{L}^{\dot{w}F}_I)$-bimodules). Since $\bbL^{wF}_I$ acts freely on  $\tilde{\X}(I,\dot{w}F)$ (similar to \cite[Proposition 12.1.10]{Sl2}), by \cite[Proposition 6.4]{DL} and \eqref{eq:DLquo} we have an isomorphism
\begin{align} \label{cohomologyX}
\R\Gamma_{c}(\X(I,wF),R)\simeq \R\Gamma_c(\tilde{\X}(I,\dot{w}F),R)\otimes^L_{R\mathbb{L}_I^{\dot{w}F}} R.
\end{align}
in $D^b(\mathbb{L}_I^{\dot{w}F}\mmod)$.
\smallskip	
We define the triangulated functors between $D^{b}(R \bbL_I^{\dot{w}F} \mmod)$ and $D^{b}(R G \mmod)$:
\begin{align*}
&&\mathscr{R}_{\bbL_I}^{\bbG,w}(-)&:=\rgc(\tilde{\X}(I, \dot{w}F),R) \otimes^{L}_{ R \bbL_I^{\dot{w}F}}-
&&\\
\text{and} &&^* \! \mathscr{R}_{\bbL_I}^{\bbG,w}(-)&:=\R\Hom_{R G}(\rgc(\tilde{\X}(I, \dot{w}F),R),-). &&
\end{align*}	
These functors are called Deligne--Lusztig induction and restriction functors respectively. In the case $w=\dot{w}=1$, these become the Harish-Chandra induction and restriction respectively.
\smallskip

According to \cite[Proposition 3.4.19]{groth_HD}, these functors induce morphisms between the corresponding Grothendieck groups
$$\begin{aligned} \label{grotgroup1}
\R^{\bbG,w}_{\bbL_I}(-) : & \ K_0(R \bbL_I^{\dot{w}F}\mmod) \longrightarrow K_0(RG\mmod), \\
{}^*\R^{\bbG,w}_{\bbL_I}(-) : &\ K_0(RG\mmod) \longrightarrow K_0(R \bbL_I^{\dot{w}F}\mmod).
\end{aligned}$$
For simplicity of our notation, we  remove $w$ form the notations of the above functors and maps.
\begin{prop}\label{boun1}
	Assume that $\ell \nmid |\mathbb{L}_I^{\dot{w}F}|$. Then
	\begin{enumerate}
		\item  The bounded complex $\R\Gamma_c(\X(I,wF),R)$  is a perfect complex of $RG$-modules.
		\item For all $i< \ell(w) $,  $\h_c^i(\X(I,wF),R) = 0$.
		\item $\h_c^{\ell(w)}(\X(I,wF), \mathbb{Z}_\ell)$ is torsion-free.
	\end{enumerate}
\end{prop}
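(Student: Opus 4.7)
The plan is to prove the three assertions in order, leveraging the isomorphism \eqref{cohomologyX} that relates $\R\Gamma_c(\X(I,wF),R)$ to the cover $\tilde{\X}(I,\dot{w}F)$. The central observation for part (i) is that the hypothesis $\ell\nmid|\mathbb{L}_I^{\dot{w}F}|$ makes the trivial module $R$ projective over $R\mathbb{L}_I^{\dot{w}F}$: the element $\frac{1}{|\mathbb{L}_I^{\dot{w}F}|}\sum_{h} h$ is a well-defined idempotent exhibiting $R$ as a direct summand of the regular module. Consequently the derived tensor product in \eqref{cohomologyX} coincides with the ordinary one. I would then invoke the standard fact, available in \cite{Rouq-bonnDL}, that because $\mathbb{L}_I^{\dot{w}F}$ acts freely on $\tilde{\X}(I,\dot{w}F)$, the complex $\R\Gamma_c(\tilde{\X}(I,\dot{w}F),R)$ admits a representative as a bounded complex of $(RG,R\mathbb{L}_I^{\dot{w}F})$-bimodules that are projective on both sides. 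Tensoring such a bimodule $P$ with $R$ on the right realises $P\otimes_{R\mathbb{L}_I^{\dot{w}F}}R$ as a direct summand of $P\otimes_{R\mathbb{L}_I^{\dot{w}F}}R\mathbb{L}_I^{\dot{w}F}=P$ viewed as a left $RG$-module, hence as a finitely generated projective $RG$-module. The resulting bounded complex is the required perfect representative.

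For part (ii), I would combine the affineness of parabolic Deligne--Lusztig varieties with Artin--Poincaré duality. Under the running hypotheses that $w$ is $I$-reduced and ${}^{w}I=I$, the variety $\X(I,wF)$ is smooth and affine of pure dimension $\ell(w)$ (generalising the classical theorem of Haastert for the non-parabolic case). Artin's vanishing theorem for affine varieties then yields $\h^i(\X(I,wF),R)=0$ for $i>\ell(w)$, and Poincaré--Verdier duality for the smooth $\ell(w)$-dimensional variety translates this into the desired statement $\h^i_c(\X(I,wF),R)=0$ for $i<\ell(w)$.

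Part (iii) is then a formal consequence of (ii) via the universal coefficient theorem. For $R=\mathbb{Z}_\ell$ one has short exact sequences
\[
0\longrightarrow \h^i_c(\X(I,wF),\mathbb{Z}_\ell)\otimes_{\mathbb{Z}_\ell}\mathbb{F}_\ell \longrightarrow \h^i_c(\X(I,wF),\mathbb{F}_\ell) \longrightarrow \mathrm{Tor}_1^{\mathbb{Z}_\ell}\!\bigl(\h^{i+1}_c(\X(I,wF),\mathbb{Z}_\ell),\mathbb{F}_\ell\bigr)\longrightarrow 0.
\]
Setting $i=\ell(w)-1$ and applying part (ii) with $R=\mathbb{F}_\ell$ forces the first two terms to vanish, hence the $\mathrm{Tor}_1$ term vanishes too. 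Since $\mathbb{Z}_\ell$ is a discrete valuation ring, this is equivalent to $\h^{\ell(w)}_c(\X(I,wF),\mathbb{Z}_\ell)$ being torsion-free.

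I expect the principal obstacle to lie in part (ii): invoking the affineness of parabolic Deligne--Lusztig varieties under these combinatorial conditions on $(I,w)$ is a nontrivial geometric input that requires careful citation (it is by no means formal from the definitions). Once that input is granted, Artin vanishing, Poincaré--Verdier duality, and the universal coefficient theorem assemble the three conclusions with little additional effort, and part (i) reduces to the projectivity of the trivial module plus the free-action representability of $\R\Gamma_c(\tilde{\X}(I,\dot{w}F),R)$.
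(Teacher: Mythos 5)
For part (i) your strategy is essentially the paper's — realise $\R\Gamma_c(\X(I,wF),R)$ as a direct summand of a perfect complex via \eqref{cohomologyX} and the projectivity of the trivial module over $R\mathbb{L}_I^{\dot{w}F}$. However, you claim the bi-projective representative of $\R\Gamma_c(\tilde{\X}(I,\dot{w}F),R)$ exists \emph{because} $\mathbb{L}_I^{\dot{w}F}$ acts freely. Free action of $\mathbb{L}_I^{\dot{w}F}$ only yields projectivity on the $R\mathbb{L}_I^{\dot{w}F}$ side. For projectivity as left $RG$-modules one must control the $G$-stabilizers, and that is exactly what the paper proves: any $G$-stabilizer of a point $x\bbU_I$ sits inside $(x\bbU_I x^{-1})^F$, hence is a $p$-group, hence of order invertible in $R$; only then does \cite[Corollary 2.3]{OLDL} give the perfect (bi-projective) representative. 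Your step slides past this and so is incomplete as written, although the result you invoke is indeed in Bonnafé--Rouquier and the repair is short.

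Part (ii) is where your argument genuinely diverges and has a real gap. You assume $\X(I,wF)$ is smooth and affine of pure dimension $\ell(w)$ in order to run Artin vanishing and Poincaré--Verdier duality. Affineness of Deligne--Lusztig varieties is not known in general: even in the non-parabolic case Haastert's theorem requires $q$ larger than the Coxeter number, the general case remains conjectural, and for parabolic varieties I am not aware of any affineness theorem at all, and the paper certainly does not supply one. So the "nontrivial geometric input" you flag as needing citation is not just nontrivial — it is unavailable. The paper takes a completely different route: using the decomposition from \cite[Proposition 8.22]{parabolicDM} it shows that $\R\Gamma_c(\X(I,wF),R)$ is a direct summand of $\R\Gamma_c(\X(wF),R)$ for the \emph{non-parabolic} variety (again using $\ell\nmid|\mathbb{L}_I^{\dot{w}F}|$ to split off the trivial summand of $R[\mathbb{L}_I^{wF}/(\mathbb{B}\cap\mathbb{L}_I)^{wF}]$), and then cites the known vanishing $\h_c^i(\X(wF),R)=0$ for $i<\ell(w)$ from \cite[Section 8]{Rouq-bonnDL}, which is established without any affineness hypothesis. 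Your argument would need to be replaced by this reduction.

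Part (iii) — the universal coefficient theorem in degree $\ell(w)-1$ together with part (ii) over $\mathbb{F}_\ell$ — matches the paper exactly and is fine.
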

\begin{proof}
	Let $x\bbU_I$ be an element of $\tilde{\X}(I,\dot{w}F)$ then 
	$$ \mathrm{Stab}_{G}(x\bbU_I)=\{g \in G \mid gx\bbU_I=x\bbU_I \} \subset  (x\bbU x^{-1})^F. $$
	Since $\bbU_I$ is a unipotent group then $(x\bbU_Ix^{-1})^F$ is $p$-group (see \cite[Section 2.1]{Malle-Test}). With the assumption $\ell \neq p$, the order of $\mathrm{Stab}_{G}(x)$ is invertible in $R$. Consequently it follows from \cite[Corollary 2.3]{OLDL} that $\R\Gamma_c(\tilde{\X}(I,\dot{w}F),R)$  is a perfect complex of $RG$-modules.
	
	By \ref{cohomologyX}, if $\ell \nmid |\bbL^{\dot{w}F}|$ then $\R \Gamma_{c}(\X(I,wF),R)$ is a direct summand of the complex  $\R \Gamma_c(\tilde{\X}(I,\dot{w}F),R)$  and therefore $\R \Gamma_{c}(\X(I,wF),R)$ is also a perfect complex in that case which proves $(i)$.
	\smallskip
	
	For (ii), we relate the parabolic Deligne--Lusztig varieties to the non-parabolic case. Let $I\subseteq J$ be two subsets of simple reflections. By \cite[Proposition 8.22]{parabolicDM},
	for the elements $w \in W$ and $v \in W_{I}$, we have an isomorphism of $\mathbb{G}^F$-varieties-$\mathbb{L}_{J}^{\dot v \dot{w}F}$ 
	\begin{center}
		$\tilde{\X}_{\bbG}(I,wF) \times_{\mathbb{L}_{I}^{wF}} \tilde{\X}_{\mathbb{L}_I}(J,v(wF)) \simeq \tilde{ \X}_{\mathbb{G}}(J,vwF)$.
	\end{center}	
	We note that for the variety $\tilde{\X}_{\mathbb{G}}(I,wF)$, we have considered the action of Frobenius map $F$ and for $\tilde{\X}_{\mathbb{L}_{I}}(J,vF) $ and we have considered the action of $wF$.
	Moding out by the finite group $\bbL_J^{\dot v \dot wF}$ we obtain
	\begin{center}
		$\tilde{\X}_{\mathbb{G}}(I,wF) \times_{\mathbb{L}_{I}^{wF}} \X_{\mathbb{L}_I}(J,v(wF)) \simeq  \X_{\mathbb{G}}(J,vwF)$.
	\end{center}	
	Let us assume $J=\emptyset$ and $v=1$. Then we have
	\begin{center}
		$\tilde{\X}_{\mathbb{G}}(I,wF) \times_{\mathbb{L}_{I}^{wF}} \X_{\mathbb{L}_I}(wF) \simeq  \tilde{\X}_{\mathbb{G}}(I,wF) \times_{\mathbb{L}_{I}^{wF}} \mathbb{L}_{I}^{wF}/ (\mathbb{B}\cap \mathbb{L}_{I})^{wF} \simeq   \X_{\mathbb{G}}(wF)$.
	\end{center}	
	Now, if we look at the cohomology of these varieties with the assumptions $\ell \nmid |\mathbb{L}_{I}^{wF}|$ then
	\begin{center}
		$\R\Gamma_c (\tilde{\X}_{\mathbb{G}}(I,wF),R)  \otimes_{R \mathbb{L}_{I}^{wF}}  R[ \mathbb{\mathbb{L}}_{I}^{wF}/  (\mathbb{B}\cap \mathbb{L}_l)^{wF}] \simeq  \R\Gamma_c(\X_{\mathbb{G}}(wF),R)$.
	\end{center}	
	On the other side,
	\begin{center}
		$R [\mathbb{\mathbb{L}}_{I}^{wF}/  (\mathbb{B}\cap \mathbb{L}_I)^{wF}] \simeq R \oplus \text{other terms}$.
	\end{center}
	Thus, $\R \Gamma_c({\X}(I,wF),R) \simeq \R \Gamma_c(\tilde{\X}(I,wF),R) \otimes_{R \mathbb{L}_I^{wF}} R$ is a direct summand of $\R\Gamma_c(\X_{\mathbb{G}}(wF),R)$. Therefore  $\h^i(\X(I,wF),R)$ is a direct summand of $\h^i(\X(wF))$ and if $i<\ell(w)$ then $\h^i(\X(I,wF),R)=0$ (see \cite[Section 8]{Rouq-bonnDL}).
	\smallskip
	
	By the universal coefficient theorem and \ref{boun1}, if $w$ is $I$-reduced and $\ell \nmid |\bbL_I^{\dot{w}F}|$ then $\h^{\ell(w)}(\X(I,wF),\mathbb{Z}_\ell)$ is torsion-free over $\mathbb{Z}_\ell$.
\end{proof}

\subsection{$\Phi_d$-blocks} \label{not_sure} 
Assume that $\ell$ does not divide $q$ and $G$ is finite reductive group. Let $\ell \mid |G|$ and $\ell$ be large number, it divides a unique cyclotomic polynomial $\Phi_d(q)$. Furthermore, the Sylow $\ell$-subgroups of $G$ are abelian in that case \cite[Theorem 3.2]{l-sylow-abelian}. It turns out that many aspects of unipotent $\ell$-blocks depend only on $d$. In \cite{Malle-Michel-Broue}, Brou\'{e}, Malle and Michel developed a theory of generic blocks for unipotent blocks of finite reductive groups with respect to the integer $d$ rather than the integer $\ell$.

\begin{dfn}
	Let $d$ be a positive integer and  $\Phi_{d}$ be the corresponding cyclotomic polynomial;
	\begin{itemize}
		\item A \emph{$\Phi_{d}$-torus} is an $F$-stable algebraic torus $\mathbb{S}$ such that $| \mathbb{S}^{F} |= \Phi_{d}(q)^{a}$ for some non-negative integer $a$.
		\item A \emph{Sylow $\Phi_{d}$-subgroup} of $\mathbb{G}$ is a maximal $\Phi_{d}$-torus among  $\Phi_d$-tori.
		\item A \emph{$d$-Levi subgroup}  $\mathbb{M}$ of $\mathbb{G}$ is the centralizer of a $\Phi_{d}$-torus.
	\end{itemize}	
\end{dfn}
\begin{eg}
	When $d=1$, the $\Phi_1$-tori are the split tori and the $1$-split Levi subgroups are the $F$-stable complements of the $F$-stable parabolic subgroups. These are the Levi subgroups which are use in Harish-Chandra induction and restriction.
\end{eg}
\begin{thm}[{\cite{l-sylow-conjugate}}] \label{Malle1}
	Assume that   $\Phi_{d}(q)$ divides $ |G|$. Then there exists a Sylow $\Phi_d $-subgroup $\mathbb{T}$ such that $|T|=\Phi_{d}(q)^{a_d}$. Furthermore, every two Sylow $\Phi_d $-subgroups are conjugate under $G$. 
\end{thm}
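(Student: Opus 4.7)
The plan is to reduce the statement to combinatorial facts about the Weyl group $W=N_\bbG(\bbT)/\bbT$ acting on the cocharacter lattice $Y(\bbT)$, via the standard Lang--Steinberg parametrization of $G$-conjugacy classes of $F$-stable maximal tori of $\bbG$ by $F$-conjugacy classes in $W$. Under this parametrization, the $F$-stable maximal torus $\bbT_w$ corresponding to $w$ satisfies $|\bbT_w^F|=\det(q\,\varphi w-1)$, where $\varphi$ denotes the finite-order part of $F$ acting on $Y(\bbT)\otimes\mathbb{R}$. The Steinberg generic order formula expresses $|G|$ as a polynomial in $q$ with a cyclotomic factorization, and I would \emph{define} $a_d$ to be the multiplicity of $\Phi_d(q)$ in $|G|$.

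For existence, I would invoke Springer's theory of regular elements in $W$. An element $w\in W$ is called $d$-regular (with respect to $F$) if $\varphi w$ admits an eigenvector for the eigenvalue $\zeta_d$, a primitive $d$-th root of unity, which does not lie on any reflecting hyperplane. A comparison between the characteristic polynomial of $\varphi w$ on $Y(\bbT)\otimes\mathbb{C}$ and the cyclotomic factorization of $|\bbT_w^F|$ shows that the multiplicity of $\zeta_d$ as an eigenvalue of $\varphi w$ is at most $a_d$, with equality precisely when $w$ is $d$-regular. Springer's existence theorem guarantees such $w$ whenever $a_d\geq 1$, which is exactly the hypothesis $\Phi_d(q)\mid|G|$. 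The intersection of $Y(\bbT)$ with the $\zeta_d$-isotypic component of $\varphi w$ then defines an $F$-stable subtorus $\mathbb{S}\subseteq\bbT_w$, and a direct computation shows $|\mathbb{S}^F|=\Phi_d(q)^{a_d}$; by the degree count this is maximal, so $\mathbb{S}$ is a Sylow $\Phi_d$-subgroup.

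For conjugacy, let $\mathbb{S}'$ be any Sylow $\Phi_d$-subgroup. I would pass to the centralizer $C_\bbG(\mathbb{S}')$, a $d$-split Levi subgroup, and choose inside it an $F$-stable maximal torus $\bbT_{w'}$; then $\mathbb{S}'$ is forced to be the canonical $\Phi_d$-part of $\bbT_{w'}$. Maximality of $\mathbb{S}'$ among $\Phi_d$-tori forces the $\zeta_d$-eigenspace of $\varphi w'$ to have dimension $a_d$, so $w'$ is again $d$-regular. Springer's uniqueness theorem states that all $d$-regular elements of $W$ form a single $W$-conjugacy class, hence $\bbT_w$ and $\bbT_{w'}$ are $G$-conjugate by some $g\in G$. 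Any such $g$ must carry the $\zeta_d$-isotypic component of one to that of the other, giving $g\mathbb{S}g^{-1}=\mathbb{S}'$.

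The heart of the argument, and its main obstacle, is Springer's theorem on regular elements: both the existence of a $d$-regular class whenever $\Phi_d(q)\mid|G|$, and the uniqueness of such a class up to $W$-conjugacy. This is a purely Weyl-group-theoretic statement whose proof rests on a careful analysis of the action of $W$ on regular eigenspaces and is largely independent of the finite group itself. A secondary subtlety is that a Sylow $\Phi_d$-subgroup is not in general contained in a \emph{unique} maximal torus, so the reduction to maximal tori must be routed through the centralizer construction rather than an arbitrary embedding; this is where the language of $d$-Levi subgroups from Section~\ref{not_sure} intervenes naturally.
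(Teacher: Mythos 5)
This statement is cited from Brou\'e--Malle \cite{l-sylow-conjugate} and not proved in the paper, so I will assess your sketch on its own terms. It relies critically on Springer's theory of regular elements, but both applications of that theory are incorrect, and the resulting gap is not repairable within the framework you set up.

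\textbf{The existence step fails.} You assert that Springer's existence theorem produces a $d$-regular element whenever $a_d \geq 1$, and that $\dim V(\varphi w, \zeta_d) = a_d$ holds \emph{precisely when} $w$ is $d$-regular. Neither claim is correct. Springer's theorem says a $d$-regular element exists iff $d$ is a \emph{regular number}, which is strictly stronger than $a_d\geq1$ (for Weyl groups the criterion is that $d$ divides the same number of degrees as codegrees). Take $W = \mathfrak{S}_5$, the Weyl group of $\Gl_5(q)$, and $d=3$: the degrees are $2,3,4,5$ so $a_3=1$ and $\Phi_3(q)$ divides $|\Gl_5(q)|$, but the regular numbers for $\mathfrak{S}_5$ are $1,2,4,5$ only. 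Indeed the $\zeta_3$-eigenvector of the $3$-cycle $(123)$ in the reflection representation lies on the hyperplane $x_4=x_5$, so it is not regular — yet $\dim V((123),\zeta_3)=1=a_3$. Thus an element achieving the maximum eigenspace dimension is not necessarily $d$-regular, and a $d$-regular element need not exist even when the Sylow $\Phi_d$-torus does.

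\textbf{The conjugacy step fails for the same reason.} In the same example both $(123)$ and $(123)(45)$ have a one-dimensional $\zeta_3$-eigenspace, hence both achieve the maximum $a_3=1$, yet they lie in distinct $\mathfrak{S}_5$-conjugacy classes. So the maximal tori of types $(3,1,1)$ and $(3,2)$ in $\Gl_5(q)$ both contain a Sylow $\Phi_3$-torus, but the tori themselves are \emph{not} $G$-conjugate. Your argument, which deduces $g\mathbb{S}g^{-1}=\mathbb{S}'$ from $G$-conjugacy of ambient maximal tori via Springer's uniqueness theorem, therefore proves nothing in this case: the Sylow $\Phi_3$-tori are conjugate (the theorem is true), but not because the $w$'s are $W$-conjugate. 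The centralizer construction you invoke does not rescue this, since $C_\bbG(\mathbb{S}')$ itself contains maximal tori of several non-conjugate types.

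The correct proof in \cite{l-sylow-conjugate} does use the bound $\dim V(\varphi w,\zeta_d)\leq a_d$ and the parametrization of tori by $F$-conjugacy classes of $W$, but it establishes existence of a $w$ saturating the bound (and conjugacy of the resulting tori) by a separate combinatorial and inductive argument about generic groups and $d$-split Levi subgroups, not by invoking regularity. You would need to replace the appeal to Springer's regular element theory by that machinery.
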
	
Recall from \ref{dlind.} that for every $F$-stable Levi subgroup $\bbL$ of $\bbG$ we have the  Deligne--Lusztig induction and restriction maps $\dli$ and $\dlr$ between the Grothendieck groups of $KL\mmod$ and $KG\mmod$.
\begin{dfn} \label{cupair1} \leavevmode
	\begin{itemize}	
		\item A unipotent character $\rho$ of $G$ is called \emph{$d$-cuspidal} if $\dlr(\rho)=0$ for every proper $d$-Levi subgroup $\bbL$ of $\mathbb{G}$.
		\item A \emph{$d$-cuspidal pair} is a pair $(\bbL,\rho)$ where $\bbL$ is a $d$-Levi subgroup and $\rho$ is a $d$-cuspidal character of $\bbL$.
		\item Given a cuspidal pair $(\bbL,\rho)$, the set of constituents of the virtual character $\dli(\rho)$ is called a $\Phi_d$-block.
	\end{itemize}
\end{dfn}	
If $\bbL$ is a $d$-Levi subgroup, we will denote by $(Z(\mathbb{L})^{\circ})_{\Phi_{d}}$ the unique  Sylow $\Phi_d $-subgroup of the torus $Z(\mathbb{L})^{\circ}$.
\begin{thm} [{\cite[Chapter 22]{Cabanes-rep-of-red}}]    \label{cupair2}
	Assume that $\ell > h$ (the Coxeter number). Let $d$ be the order of $q$ in $k^{\times}$. 
	There is a bijection
	$$(\bbL,\rho) \longmapsto B(\bbL,\rho) $$
	between the set of $d$-cuspidal pairs (up to conjugation) and the set of unipotent $\ell$-blocks of $G$ such that 
	\begin{itemize}
		\item The unipotent characters in $B(\mathbb{L},\rho)$ are exactly the irreducible constituents of $\R^{\mathbb{G}}_{\mathbb{L}}(\rho)$ (hence they form a $\Phi_d$-block).
		\item The defect group of the block $B(\mathbb{L},\rho)$ is  the Sylow $\ell$-subgroup of $((Z(\mathbb{L})^{\circ})_{\Phi_{d}})^{F}$.
		\item  Let $\mathbb{S}$ be a Sylow $\Phi_d $-subgroup of $\bbG$ and $\bbL=C_{\bbG}(\mathbb{S})$. Then the principal block is  $B(\mathbb{L},1_{L})$.
	\end{itemize}	
\end{thm}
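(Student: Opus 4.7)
The plan is to combine the $d$-Harish-Chandra theory of Broué-Malle-Michel with Brauer's central-character criterion for $\ell$-blocks, exploiting the hypothesis $\ell > h$ throughout. The first step is to invoke the parameterization of \cite{Malle-Michel-Broue}: the unipotent characters of $G$ partition into $d$-Harish-Chandra series, each indexed by a $G$-conjugacy class of $d$-cuspidal pairs $(\bbL,\rho)$, with the series attached to $(\bbL,\rho)$ equal to the set of irreducible constituents of $\dli(\rho)$. This already produces the candidate assignment $(\bbL,\rho) \mapsto B(\bbL,\rho)$ and makes property (i) true as a statement about $\Phi_d$-blocks.

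The core step is to upgrade this from $\Phi_d$-blocks to genuine $\ell$-blocks: I would show that each $d$-Harish-Chandra series is contained in a single unipotent $\ell$-block, and that distinct series lie in distinct $\ell$-blocks. The tool is Brauer's central-character criterion, namely that $\chi$ and $\chi'$ share an $\ell$-block if and only if $\omega_\chi \equiv \omega_{\chi'}$ modulo a maximal ideal above $\ell$ on every class sum. Using Lusztig's uniform-function expansions of unipotent characters, these central-character values become rational functions in $q$ whose cyclotomic factors $\Phi_i(q)$ all satisfy $i \leq h$; since $\ell > h$, the only $\Phi_i(q)$ that $\ell$ can divide is $\Phi_d(q)$ itself, so the congruence collapses to a condition that cleanly separates the $d$-Harish-Chandra series. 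The Mackey formula for Deligne-Lusztig induction then guarantees that constituents of a single $\dli(\rho)$ share the same $\ell$-block.

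For the defect group in (ii), I would compute the $\ell$-part of $|G|/\chi(1)$ for a height-zero $\chi$ in $B(\bbL,\rho)$ using the degree formula $\dli(\rho)(1) = \pm [G : \bbL^F]_{p'}\,\rho(1)$ together with the fact that $\bbL = C_{\bbG}(\mathbb{S})$ for some $\Phi_d$-torus $\mathbb{S}$: the hypothesis $\ell > h$ forces $|\bbL^F|_\ell = |((Z(\bbL)^{\circ})_{\Phi_{d}})^F|_\ell$, so the defect is precisely the Sylow $\ell$-subgroup of $((Z(\bbL)^{\circ})_{\Phi_{d}})^F$. For (iii), the trivial character $1_G$ occurs in $\dli(1_{\bbL})$ whenever $\bbL = C_{\bbG}(\mathbb{S})$ for a Sylow $\Phi_d$-subgroup $\mathbb{S}$, and $1_{\bbL}$ is $d$-cuspidal by maximality of $\mathbb{S}$ (any proper $d$-Levi of $\bbL$ would centralize a strictly larger $\Phi_d$-torus); since the principal $\ell$-block is characterized by containing $1_G$, it is $B(\bbL, 1_{\bbL})$.

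The main obstacle is the central-character comparison: one must control the $\ell$-adic valuations of every rational function in $q$ that arises, and verify that all cyclotomic denominators are invertible in $\mathcal{O}$. This is exactly where the bound $\ell > h$ becomes indispensable, and it is the step in which one genuinely uses Lusztig's classification of unipotent characters rather than only formal properties of $\dli$.
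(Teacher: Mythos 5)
The paper does not prove this statement at all: it is quoted as a known theorem of $d$-Harish-Chandra theory with a citation to \cite[Chapter 22]{Cabanes-rep-of-red} (the result goes back to Brou\'e--Malle--Michel and Cabanes--Enguehard), so there is no in-paper proof to compare yours against. Judged on its own, your sketch assembles the right ingredients in outline --- the partition of unipotent characters into $d$-series indexed by $d$-cuspidal pairs, the observation that $\ell>h$ forces $d$ to be the unique $i\leq h$ with $\ell\mid\Phi_i(q)$, the degree formula $\dli(\rho)(1)=\pm[G:\bbL^F]_{p'}\rho(1)$, and the argument for (iii) via $d$-cuspidality of $1_{\bbL}$ and the occurrence of $1_G$ in $\dli(1_{\bbL})$ --- but the central step is not actually carried by the tools you name.

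Concretely, two gaps. First, the claim that ``the Mackey formula for Deligne--Lusztig induction then guarantees that constituents of a single $\dli(\rho)$ share the same $\ell$-block'' is not correct: the Mackey formula decomposes compositions of Lusztig inductions and restrictions and says nothing about $\ell$-block linkage. What the actual proofs use here is the Brou\'e--Michel compatibility of Lusztig induction with the sums of block idempotents attached to $\ell$-elements, combined with a subpair/Brauer-second-main-theorem analysis (this is the substance of the generic-blocks paper and of Chapter 22--23 of Cabanes--Enguehard); your appeal to Brauer's central-character criterion via ``uniform-function expansions'' is likewise unsubstantiated, since $\omega_\chi$ evaluated on arbitrary class sums is not visibly a rational function in $q$ with cyclotomic factors bounded by $h$, and separating distinct $d$-series into distinct $\ell$-blocks is precisely the hard part that this hand-wave skips. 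Second, for (ii) computing the $\ell$-part of $|G|/\chi(1)$ at best pins down the defect (an order), not the defect group: identifying the defect group as the Sylow $\ell$-subgroup of $((Z(\bbL)^{\circ})_{\Phi_d})^F$ requires a Brauer-correspondent/subpair argument (e.g.\ locating a maximal $B(\bbL,\rho)$-subpair), which your outline does not supply. So the proposal is a reasonable roadmap of the standard proof but leaves its two load-bearing steps unproved.
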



\subsection{Modular representation theory of $\Gl_n(q)$}

\subsubsection{Partitions and $\beta$-sets}
A partition of $n$ is a non-increasing sequence of non-negative integers $(\lambda_1,\lambda_2,...,\lambda_r)$ such that $\lambda_1 \geq \lambda_2 \geq \cdots \geq \lambda_r$ and $\displaystyle \sum_{i} \lambda_{i}=n$.  A $\beta$-set of $\lambda$ is a finite set of decreasing numbers of the form $\{\lambda_1 +r -1 , \lambda_2 +r -2,\cdots,\lambda_r\}$.
\smallskip

Note that adding a zero in the partition $\lambda$ has the effect of changing the $\beta$-set
 to $\{\lambda_1 +r , \lambda_2+r-1. \cdots \lambda_r +1,0 \}$. In the sequel we will often add sufficiently many zeros to get large enough $\beta$-sets, see \ref{not_sure} . 
\smallskip

Let $d \in \{0,1,\cdots,n\}$ and $\mu$ be a partition of $n-d$.  Let $X = \{x_1 < x_2 < \cdots < x_s \}$ be a $\beta$-set of $\mu$ which is large enough such that it contains $\{0,1,...,d -1 \}$.  An addable $d$-hook is a pair $(x, x + d)$ where $x \in X$ and $x + d \notin X$. Adding a $d$-hook to $\mu$ corresponds to moving 
a bead one position left to an empty position on $d$-abacus (recall that the $r$-runner abacus is an abacus with $r$  horizontal runners, numbered by $ 0,\ldots,r -1$ from top to bottom. For each $j$, the runner $j$ has marked positions labeled by the non-negative integers congruent to $j$ modulo $r$ increasing down the runner). Removing a $d$-hook from the partition $n$ corresponds to moving 
a bead one position right to an empty position on $d$-abacus. We say that $\mu$ is a $d$-core if it has no $d$-hook. 
\smallskip

Let $X'$ be the subset of X defined by $X'=\{x \in x | x+d \notin X \} $. 
We denote by $\mu*x$ the partition which has the $\beta$-set $(X \setminus \{x\}) \cup \{x+d\}$.

\subsubsection{$\Phi_d$-blocks of $\Gl_n(q)$}
The unipotent characters of $\Gl_n(q)$ are labeled by partitions of $n$. Given a partition $\lambda$ of $n$ we will denote by $\nabla(\lambda)$ the unipotent character of $\Gl_{n}(q)$ corresponding to $\lambda$ with the convention that $\nabla(1,1,...,1) = \mathrm{St}_G$ is the Steinberg character of $\Gl_n(q)$. By abuse of notation it will also denote a chosen $\mathbb{Q}_\ell \Gl_n(q)$-module with character $\nabla(\lambda)$.
\smallskip

Set $\bbL_I= (\Gl_1(\overline{\mathbb{F}}_p))^d\times \Gl_{n-d}(\overline{\mathbb{F}}_p)$ and $\bbL_I^{\dot{w}F} = \Gl_1(q^d)\times \Gl_{n-d}(q)$. 
If $\mu$ is any partition of $n-d$ then it is shown in \cite{Malle-Michel-Broue} that  
$$ \R^{\bbG}_{\bbL_I}(\nabla(\mu)) = \sum_{x} \epsilon_x \nabla(\mu*x)$$
where $x$ is running over the addable $d$-hooks
of $\mu$ and $\epsilon_x=\pm 1$. Consequently, if $\mu$ is a $d$-core then the characters in  the $\Phi_d$-block $B(\bbL_I,\nabla(\mu))$ are labeled by the partitions which are obtained by adding one $d$-hook to $\mu$. 

\subsubsection{Modular unipotent representations of $\Gl_n(q)$}
 
By \cite{Dipper-J}, 
 the simple $\mathbb{F}_\ell \Gl_n(q)$-modules which lie in a unipotent block are also labeled by  partitions of $n$. More precisely, given $\lambda $ a partition of $n$ there exists a simple $\mathbb{F}_\ell \Gl_n(q)$-module $S_{\mathbb{F}_\ell}(\lambda)$  such that for every partition $\mu$ of $n$, the image of $\nabla(\mu)$ via decomposition map $\mathsf{dec}$ lies in the Grothendieck group of $\mathbb{F}_\ell \Gl_n(q)\mmod$ denoted by $K_0(\mathbb{F}_\ell \Gl_n(q)\mmod)$  and it is given by
\begin{equation}\label{eq:multiofcharacter}
\mathsf{dec}(\nabla(\mu))=[S_{\mathbb{F}_\ell}(\mu)] + \sum_{\mu \lhd \lambda} d_{\mu,\lambda} [S_{\mathbb{F}_\ell}(\lambda)]
\end{equation}
where  $d_{\mu,\lambda}$ are non-negative integer which are the entries of decomposition matrix of $\Gl_n(q)$ (page 28 of \cite{OLDL}). 
\smallskip

If $\nabla(\lambda)$ is in a block which has trivial defect (hence is alone in its block) then there is a unique lattice (i.e. a $\mathbb{Z}_\ell \Gl_n(q)$-module which is free as $\mathbb{Z}_\ell$-module) with character $\nabla(\lambda)$ and the $\ell$-reduction of this lattice is the simple module $S_{\mathbb{F}_\ell}(\lambda)$. It is no longer true if $\nabla(\lambda)$ lies in a block with non-trivial defect. However we can guarantee the uniqueness by imposing some conditions on the lattice, as shown in the following proposition.
\begin{prop}\label{nabla}
	Let $G=\Gl_n(q)$ and $\lambda$ be a partition of $n$. There exists a unique (up to isomorphism) $\mathbb{Z}_\ell G$-lattice $\nabla_{\mathbb{Z}_\ell}(\lambda)$ such that 
	\begin{enumerate}
		\item$\nabla_{\mathbb{Z}_\ell}(\lambda)$  has character $\nabla(\lambda)$.
		\item The socle of $\mathbb{F}_\ell \otimes_{\mathbb{Z}_\ell} \nabla_{\mathbb{Z}_\ell}(\lambda)$ is the simple module $S_{\mathbb{F}_\ell}(\lambda)$.
	\end{enumerate}	
\end{prop}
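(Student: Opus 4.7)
The plan is to realise $\nabla_{\mathbb{Z}_\ell}(\lambda)$ as a \emph{saturated} sublattice of the projective cover of $S_{\mathbb{F}_\ell}(\lambda)$, so that the decomposition-number equality $d_{\lambda,\lambda}=1$ from \eqref{eq:multiofcharacter} supplies both existence and uniqueness. Denote by $\tilde P(\lambda)$ the unique lift to $\mathbb{Z}_\ell G\mmod$ of the indecomposable projective cover $P(\lambda)$ of $S_{\mathbb{F}_\ell}(\lambda)$. Since $\mathbb{F}_\ell G$ is symmetric, $P(\lambda)$ coincides with the injective hull $I(\lambda)$ and in particular has simple socle $S_{\mathbb{F}_\ell}(\lambda)$. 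By \eqref{eq:multiofcharacter} the $KG$-module $K\otimes_{\mathbb{Z}_\ell}\tilde P(\lambda)$ has character $\sum_\mu d_{\mu,\lambda}\,\nabla(\mu)$ with $d_{\lambda,\lambda}=1$, so there is a $KG$-embedding $\iota\colon \nabla(\lambda)\hookrightarrow K\otimes_{\mathbb{Z}_\ell}\tilde P(\lambda)$, unique up to a nonzero scalar.

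For existence, I would set $\nabla_{\mathbb{Z}_\ell}(\lambda):=\iota^{-1}(\tilde P(\lambda))$, manifestly a $\mathbb{Z}_\ell G$-lattice of character $\nabla(\lambda)$. Saturation of this sublattice inside $\tilde P(\lambda)$ is immediate from the fact that $\iota(\nabla(\lambda))$ is a $K$-subspace of $K\otimes_{\mathbb{Z}_\ell}\tilde P(\lambda)$: if $\iota(x)=\ell y$ with $y\in\tilde P(\lambda)$, then $\iota(x/\ell)=y\in\tilde P(\lambda)$, so $x/\ell\in\nabla_{\mathbb{Z}_\ell}(\lambda)$. Consequently the mod-$\ell$ reduction of $\iota$ is an injection $\overline{\nabla_{\mathbb{Z}_\ell}(\lambda)}\hookrightarrow P(\lambda)$, and because $\soc P(\lambda)=S_{\mathbb{F}_\ell}(\lambda)$ is a simple essential submodule, the same holds for the image, so condition (ii) is satisfied.

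For uniqueness, let $L$ be a lattice satisfying (i) and (ii). The equality $\soc\overline L=S_{\mathbb{F}_\ell}(\lambda)=\soc I(\lambda)$ combined with the injectivity of $I(\lambda)$ gives an $\mathbb{F}_\ell G$-map $\phi\colon \overline L\to P(\lambda)$ extending the inclusion of socles; being injective on the essential socle, $\phi$ is injective. The critical step is to lift $\phi$ to a $\mathbb{Z}_\ell G$-linear map $\tilde\phi\colon L\to\tilde P(\lambda)$. Dualising by $L\mapsto L^\ast:=\Hom_{\mathbb{Z}_\ell}(L,\mathbb{Z}_\ell)$, condition (ii) becomes the statement that $\overline{L^\ast}$ has simple head $S_{\mathbb{F}_\ell}(\lambda)^\ast$; the projective cover of $S_{\mathbb{F}_\ell}(\lambda)^\ast$, identified with the $\mathbb{Z}_\ell$-dual of $\tilde P(\lambda)$ via the symmetric $\mathbb{Z}_\ell$-order structure of $\mathbb{Z}_\ell G$, then surjects onto $L^\ast$ by the standard lift of the surjection onto $\overline{L^\ast}$, and redualising yields the injection $\tilde\phi$ (injective because its reduction is and $L$ is $\mathbb{Z}_\ell$-torsion-free). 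Its image is a saturated sublattice of $\tilde P(\lambda)$ whose $K$-span realises $\nabla(\lambda)$ through an embedding proportional to $\iota$; hence it coincides with $\iota^{-1}(\tilde P(\lambda))=\nabla_{\mathbb{Z}_\ell}(\lambda)$ up to isomorphism.

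The main obstacle is the lifting step for $\tilde\phi$: it ultimately rests on the self-duality of the symmetric $\mathbb{Z}_\ell$-order $\mathbb{Z}_\ell G$ and on the unique-lift property for projective covers from $\mathbb{F}_\ell G$ to $\mathbb{Z}_\ell G$. The remaining pieces—saturation, essentiality of the socle, and the scalar indeterminacy of $\iota$—are formal once this lifting is in place.
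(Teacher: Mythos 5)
Your proof is correct and follows essentially the same strategy as the paper: realise $\nabla_{\mathbb{Z}_\ell}(\lambda)$ as the saturated sublattice of the lift $\tilde P(\lambda)$ of the projective cover of $S_{\mathbb{F}_\ell}(\lambda)$ cut out by the $\nabla(\lambda)$-isotypic component, and derive both conditions from saturation plus the fact that $\tilde P(\lambda)$ reduces to the injective hull of $S_{\mathbb{F}_\ell}(\lambda)$. The existence half (your $\iota^{-1}(\tilde P(\lambda))$ is exactly the paper's $a_\lambda P\cap P$, and your saturation argument is the same torsion computation) is identical in substance. The only place you deviate is the lifting step in uniqueness: the paper proves directly that the reduction map
\[
\Hom_{\mathbb{Z}_\ell G}(L,\tilde P(\lambda))\otimes_{\mathbb{Z}_\ell}\mathbb{F}_\ell\;\longrightarrow\;\Hom_{\mathbb{F}_\ell G}(\overline L,P(\lambda))
\]
is an isomorphism, by reducing (via additivity in $P$) to the case $P=\mathbb{Z}_\ell G$ and then invoking the symmetric-algebra isomorphism $\Hom_{\mathbb{Z}_\ell G}(L,\mathbb{Z}_\ell G)\simeq\Hom_{\mathbb{Z}_\ell}(L,\mathbb{Z}_\ell)$, while you instead dualise $L$, lift the surjection from the projective cover of the simple head of $\overline{L^\ast}$, and dualise back, using self-duality of projectives over the symmetric order. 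These are two packagings of the same mechanism (both ultimately rest on $\mathbb{Z}_\ell G$ being a symmetric $\mathbb{Z}_\ell$-order), so the buy is purely expository: the paper's version is a one-step $\Hom$ computation, yours is closer to the standard ``lift a projective cover and dualise'' manipulation familiar from the theory of $\mathcal{O}G$-lattices. One small thing worth making explicit in your write-up: after redualising, you need the reduction of $\tilde\phi$ to be injective. This follows because the surjection of lattices you lift has $\mathbb{Z}_\ell$-free kernel, so the short exact sequence is $\mathbb{Z}_\ell$-split; the dual sequence is then also split over $\mathbb{Z}_\ell$, so its reduction stays exact. That is the input you use to conclude that $\tilde\phi(L)$ is a saturated sublattice, after which the final identification with $\nabla_{\mathbb{Z}_\ell}(\lambda)$ is as you say.
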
	
\begin{proof}
	Let  $P_{\mathbb{F}_\ell}(\lambda)$ be a projective cover of $S_{\mathbb{F}_\ell}(\lambda)$ and $P_{\mathbb{Z}_\ell}(\lambda)$ be its unique lift as projective $\mathbb{Z}_\ell \Gl_n(q)$-module.
	The character of the module $P_{\mathbb{F}_\ell}(\lambda)$ i.e its image in the Grothendieck group lies in $K_0(\mathbb{Q}_\ell \Gl_n(q)\mmod)$. By Brauer reciprocity (see for example \cite[Proposition 4.1]{OLDL}) we have 
	\begin{equation*} e\big([P_{\mathbb{F}_l}(\lambda)]\big)= [ \mathbb{Q}_l \otimes_{\mathbb{Z}_l} P_{\mathbb{Z}_l}(\lambda) ]= \nabla(\lambda)+ \sum_{\mu \lhd \lambda} d_{\mu,\lambda} \nabla(\mu).
	\end{equation*}
	 By \eqref{eq:multiofcharacter}, $\nabla(\lambda)$ occurs with multiplicity one in the character of $P$. 
	Now, let $a_{\lambda}$ be the central idempotent of $\mathbb{Q}_{\ell}G$ associated with the irreducible character $\nabla(\lambda)$ (recall $a_{\lambda}=\frac{\dim \nabla(\lambda)} { \mid \Gl_n(q) \mid} \Sigma_{g \in G} \rho_{\lambda}(g) g^{-1} $). 
	We define a $\mathbb{Z}_\ell G$-lattice as a submodule of $P$ by: 
	\begin{center}
		$\nabla_{\mathbb{Z}_\ell}(\lambda):= a_{\lambda} P \bigcap P
		$.
	\end{center}
	This $\mathbb{Z}_\ell G$-lattice has irreducible character $\Delta(\lambda)$ over $\mathbb{Q}_\ell$
	\begin{center}
		$\mathbb{Q}_\ell \otimes_{\mathbb{Z}_\ell}  (a_{\lambda}P \bigcap P) \simeq \nabla(\lambda)$
	\end{center}	
	which proves (i). For proving (ii) we consider 
	the short exact sequence as follows
	\[
	\xymatrixcolsep{0.6cm}
	\xymatrixrowsep{0.6cm}
	\xymatrix{0 \ar[r] & \nabla_{\mathbb{Z}_\ell}(\lambda) \ar[r] & P \ar[r] & P / \nabla_{\mathbb{Z}_\ell}(\lambda) \ar[r] & 0}.
	\]
	Tensoring by $\mathbb{F}_\ell$ yields the following exact sequence
	\begin{small}
		\[
		\xymatrixcolsep{0.32cm}
		\xymatrixrowsep{0.34cm}
		\xymatrix{ 0 \ar[r] & \tor^{1}_{\mathbb{Z}_\ell}(P / \nabla_{\mathbb{Z}_\ell}(\lambda),\mathbb{F}_\ell)  \ar[r]  & \nabla_{\mathbb{F}_\ell}(\lambda) \otimes_{\mathbb{Z}_\ell} \mathbb{F}_\ell  \ar[r] & P \otimes_{\mathbb{Z}_\ell} \mathbb{F}_\ell\ar[r] &  P / \nabla_{\mathbb{Z}_\ell}(\lambda) \otimes_{\mathbb{Z}_\ell} \mathbb{F}_\ell \ar[r] & 0}.
		\]
	\end{small}
	If $\tor^{1}_{\mathbb{Z}_\ell}(P / \nabla_{\mathbb{Z}_\ell}(\lambda),\mathbb{F}_\ell)=0$ then we have 
	\begin{center}
		$ \nabla_{\mathbb{F}_\ell}(\lambda) \otimes_{\mathbb{Z}_\ell} \mathbb{F}_\ell \hookrightarrow P \otimes_{\mathbb{Z}_\ell} \mathbb{F}_\ell\ \simeq  P_{\mathbb{F}_\ell}(\lambda)$.
	\end{center}	
	We shall prove that $P / \nabla_{\mathbb{Z}_\ell}(\lambda)$  is a torsion-free module over $\mathbb{Z}_\ell$.  For simplicity,  let us set $e:=a_{\lambda}$. Let us consider $x \in P$ such that $rx \in P \cap eP$ for some $r \in \mathbb{Z}_\ell$. We have  $rx=ey$ for some $y \in P$. Then 
	\begin{center}
		$erx=e\cdot ey=e^2 y=ey=rx$.
	\end{center}	
	Hence $r(ex)=rx$. Since $P$ is a torsion-free module, we deduce that $r(x-ex)=0$ and that $x=ex$. So $x$ should be in $P \bigcap eP$. This shows  that $ \nabla_{\mathbb{F}_\ell}(\lambda) \otimes_{\mathbb{Z}_\ell} \mathbb{F}_l$ embeds in  $P \otimes_{\mathbb{Z}_\ell} \mathbb{F}_\ell\ \simeq  P_{\mathbb{F}_\ell}(\lambda)$. Since the socle of  $P_{\mathbb{F}_\ell}(\lambda)$ is $S_{\mathbb{F}_\ell}(\lambda)$, then so is the socle of  $\nabla_{\mathbb{F}_\ell}(\lambda) \otimes_{\mathbb{Z}_\ell} \mathbb{F}_\ell$ which proves (ii).
	
	\smallskip 
	It is left to prove the uniqueness. For simplicity of the notation we use the notation $\nabla$ instead of $\nabla_{\mathbb{Z}_\ell}(\lambda)$. Let us consider a $\mathbb{Z}_\ell G$-lattice $\nabla^{\prime}$ satisfying the conditions (i) and (ii). We shall prove that $\nabla\simeq \nabla^{\prime}$. 
	We claim that the map $\Hom_{\mathbb{Z}_\ell G\mmod}(\nabla^{'},P) \otimes_{\mathbb{Z}_\ell} \mathbb{F}_\ell \longrightarrow \Hom_{kG\mmod}(\mathbb{F}_\ell \nabla^{'},\mathbb{F}_\ell P)$ is an isomorphism of $\mathbb{F}_\ell $-vector spaces. By naturality, we only need to show it when $P=\mathbb{Z}_\ell G$.  Since $\mathbb{F}_\ell G$ and $\mathbb{Z}_\ell G$ are symmetric algebras then we have the following diagram 
	\[
	\xymatrix{\Hom_{(\mathbb{Z}_\ell G\mmod)}(\nabla^{'},\mathbb{Z}_\ell G) \otimes_{\mathbb{Z}_\ell }\mathbb{F}_\ell  \ar[r]^{\hspace{0.16cm} h^{'}} \ar[d]^{\simeq}  & \Hom_{(\mathbb{F}_\ell G\mmod)}(\mathbb{F}_\ell \nabla^{'},\mathbb{F}_\ell G) \ar[d]^{\simeq} \\
		\Hom_{(\mathbb{Z}_\ell \mmod)}(\nabla^{'},\mathbb{Z}_\ell ) \otimes_{\mathbb{Z}_\ell }\mathbb{F}_\ell  \ar[r]^{h} &  \Hom_{(\mathbb{F}_\ell \mmod)}(\mathbb{F}_\ell \nabla^{'},\mathbb{F}_\ell )}
	\]
	Since $\nabla^{'}$ is $\mathbb{Z}_\ell $-free module of finite rank then $h$ is an isomorphism. Therefore $h'$ is an isomorphism as claimed.
	
	\smallskip
	We deduce that there exists a map $f:\nabla^{'} \longrightarrow P$ such that $\bar{f}=f \otimes_{\mathbb{Z}_\ell }\mathbb{F}_\ell  $ is non-zero. We note that $\mathbb{F}_\ell  \nabla^{'}$ and $P_{\mathbb{F}_\ell}(\lambda)$ have the same socle therefore $\bar{f}$ is injective. Now we prove that $f$ is also injective. We have $\kerr(f) \subseteq \nabla^{'}$. Since $\mathbb{Q}_\ell \nabla^{'}$ is simple, $\kerr(f)=0$ and hence $f$ is injective. Now we can write the short exact sequence 
	\[
	\xymatrixcolsep{0.6cm}
	\xymatrixrowsep{0.6cm}
	\xymatrix{0 \ar[r] & \nabla^{\prime} \ar[r]^{f} & P \ar[r] & P / f(\nabla^{\prime}) \ar[r] & 0}.
	\]
	This yields the exact sequence 
	\[
	\xymatrixcolsep{0.6cm}
	\xymatrixrowsep{0.6cm}
	\xymatrix{0 \ar[r] &  \tor^{1}_{\mathbb{Z}_\ell}(P/f(\nabla^{'}),\mathbb{F}_\ell)  \ar[r] &  \mathbb{F}_\ell \nabla^{\prime} \ar[r]^{\bar f} & \mathbb{F}_\ell  P \ar[r] & \mathbb{F}_\ell P / \mathbb{F}_\ell  f(\nabla^{\prime}) \ar[r] & 0}.
	\]
	Since $\bar{f}$ is injective, $P/f(\nabla^{'})$ is a torsion-free module.  
	Now, we will prove that $\nabla \simeq \nabla^{'}$. We proved that both $P/\nabla$ and $P/f(\nabla^{'})$ are torsion-free modules. The modules $\mathbb{Q}_\ell f(\nabla^{'})$ and $\mathbb{Q}_\ell \nabla$ are two pure submodules of  $\mathbb{Q}_\ell P$ and the character $\nabla(\lambda)$ only occurs with multiplicity one, therefore $\mathbb{Q}_\ell f(\nabla^{'})=\mathbb{Q}_\ell \nabla$. We also have $ \nabla \subseteq
	f(\nabla^{'})$. If $f(\nabla^{'}) \oplus A =P$ and $\nabla \oplus B =P$ and $x \in \nabla$ then we can write $x=a_1 +a_2$ with $a_1 \in f(\nabla^{'})$ and $a_2 \in A$. There exists $\theta \in \mathbb{Q}_\ell$ such that $\theta x \in \nabla$, but $\theta x=\theta a_1 + \theta a_2$ forces $a_2=0$. Hence $x=a_1 \in f(\nabla^{'})$, this shows that $\nabla \subseteq f(\nabla^{'})$. With the same argument, we can show that $f(\nabla^{'}) \subseteq \nabla$ and therefore $f(\nabla^{'})= \nabla$. We deduce that $f$ induces an isomorphism  $\nabla' \simto \nabla$.
\end{proof}	
\begin{notation}
	If $R$ is any ring among $\ell$-modular system $(\mathbb{Q}_\ell,\mathbb{Z}_\ell,\mathbb{F}_\ell)$ we define
	$$\nabla_R(\lambda):=\nabla_{\mathbb{Z}_\ell}(\lambda) \otimes_{\mathbb{Z}_\ell} R.$$
	With this notation we have $\nabla_{\mathbb{Q}_\ell}(\lambda) \simeq \nabla(\lambda)$.
\end{notation}	


\section{Cohomology of the Deligne--Lusztig variety $\X_{n,d}$} \label{def:Xnd} 
\subsection{Preliminaries}
We start to state some general properties of the cohomology of parabolic Deligne--Lusztig varieties in the case of $\Gl_n(q)$.
\begin{thm} \label{1,st1}
	Let $G=\Gl_n(q)$.  Let $(I,wF)$ be such that $w \in \mathfrak{S}_n$ is $I$-reduced and ${}^{w}I=I$.
	\begin{enumerate}
		\item The trivial representation $\nabla(n)$ only occurs in $\h^{i}_c(\X(I,wF),\mathbb{Q}_\ell)$ for $i=2\ell(w)$ and it occurs with multiplicity one. 
		\item  The Steinberg representation  $\nabla(1^n)$ can only occur in $\h^{i}_c(\X(I,wF),\mathbb{Q}_\ell)$ if $I=\emptyset$ and $i=\ell(w)$. In that case it occurs with multiplicity one.
		\item Assume  $\ell \nmid |\mathbb{L}_I^{\dot{w}F}|$. The simple module  $S_{\mathbb{F}_\ell}(1^n)$ can only be a composition factor of $\h^{i}_c(\X(I,wF),\mathbb{F}_\ell)$ where $i=\ell(w)$.
	\end{enumerate}
\end{thm}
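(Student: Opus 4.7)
The plan is to establish each of the three parts in sequence, leveraging the direct summand decomposition obtained in the proof of Proposition~\ref{boun1}: under the relevant hypothesis on $\ell$, the complex $\rgc(\X(I,wF),R)$ is a direct summand of $\rgc(\X(wF),R)$ for the Borel-type Deligne--Lusztig variety $\X(wF)$.

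For (i), the variety $\X(I,wF)$ is smooth and geometrically irreducible of dimension $\ell(w)$, so $\h^{2\ell(w)}_c(\X(I,wF),\mathbb{Q}_\ell) \simeq \mathbb{Q}_\ell$ as a $G$-module (up to a Tate twist), yielding the multiplicity-one claim in top degree. For $i < 2\ell(w)$, the direct summand reduction places the question in the classical case, where the trivial character $\nabla(n)$ does not appear in $\h^i_c(\X(wF),\mathbb{Q}_\ell)$ by purity: it contributes Frobenius weight $2\ell(w)$, which by Deligne's theorem can only appear in the top compactly-supported cohomology of a smooth variety of dimension $\ell(w)$.

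For (ii), the same direct summand reduction combined with the classical Deligne--Lusztig disjointness result (Steinberg appears in $\h^\bullet_c(\X(wF),\mathbb{Q}_\ell)$ only in middle degree $\ell(w)$) concentrates the possible degrees to $i=\ell(w)$. To pin down the multiplicity, I would compute the alternating sum via Frobenius reciprocity:
\[
\sum_i (-1)^i \langle \nabla(1^n), \h^i_c(\X(I,wF),\mathbb{Q}_\ell)\rangle_G = \langle \mathrm{St}_G, \R_{\bbL_I}^{\bbG}(\mathbb{Q}_\ell)\rangle_G = \langle {}^*\R_{\bbL_I}^{\bbG}(\mathrm{St}_G), \mathbb{Q}_\ell\rangle_{\bbL_I^{\dot{w}F}} = \langle \mathrm{St}_{\bbL_I^{\dot{w}F}}, \mathbb{Q}_\ell\rangle_{\bbL_I^{\dot{w}F}},
\]
using that Deligne--Lusztig restriction carries the Steinberg of $G$ to the Steinberg of $\bbL_I^{\dot{w}F}$. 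The last inner product equals $1$ when $\bbL_I^{\dot{w}F}$ is a torus (i.e.\ $I=\emptyset$) and $0$ otherwise; since Steinberg can only appear in one degree, this coincides with the actual multiplicity there.

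For (iii), the assumption $\ell \nmid |\bbL_I^{\dot{w}F}|$ allows the direct summand decomposition to be transported to $\mathbb{F}_\ell$-coefficients, reducing the problem to the analogous statement for $\h^i_c(\X(wF),\mathbb{F}_\ell)$. My approach is via the projective cover $P_{1^n}$ of $S_{\mathbb{F}_\ell}(1^n)$: by Brauer reciprocity, since $1^n$ is minimal in the dominance order---so that $S_{\mathbb{F}_\ell}(1^n)$ is a composition factor only of $\nabla(1^n)$---the projective $\mathbb{Z}_\ell G$-lift $\tilde{P}_{1^n}$ satisfies $\tilde{P}_{1^n}\otimes_{\mathbb{Z}_\ell}\mathbb{Q}_\ell \simeq \nabla(1^n)$. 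Writing the composition multiplicity of $S_{\mathbb{F}_\ell}(1^n)$ in $\h^i_c(\X,\mathbb{F}_\ell)$ as $\dim\Hom_{\mathbb{F}_\ell G}(P_{1^n},\h^i_c(\X,\mathbb{F}_\ell))$ and using the projectivity of $\tilde{P}_{1^n}$, this equals the $i$-th cohomology of the $\mathbb{Z}_\ell$-complex $\Hom_{\mathbb{Z}_\ell G}(\tilde{P}_{1^n},\rgc(\X,\mathbb{Z}_\ell)) \otimes^L_{\mathbb{Z}_\ell} \mathbb{F}_\ell$, whose rationalisation $\Hom_{\mathbb{Q}_\ell G}(\mathrm{St}_{\mathbb{Q}_\ell},\rgc(\X,\mathbb{Q}_\ell))$ is concentrated in degree $\ell(w)$ by part (ii). The main obstacle will be ruling out $\mathbb{Z}_\ell$-torsion contributions in neighbouring cohomological degrees through the universal coefficient sequence; I expect this to be the technical heart of the argument, handled by block-theoretic considerations---the simple $S_{\mathbb{F}_\ell}(1^n)$ lies in the unipotent block of $\mathrm{St}_G$, whose defect structure is governed by the $d$-core of $1^n$---together with the perfect-complex structure of $\rgc(\X,\mathbb{Z}_\ell)$ granted by Proposition~\ref{boun1}.
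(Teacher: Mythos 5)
Your treatments of (i) and (ii) are plausible and take a genuinely different route from the paper, which simply cites \cite[Corollary 8.38]{parabolicDM} for both statements. The purity argument for (i) and the Lefschetz-character / Frobenius-reciprocity computation for (ii) are standard and should work, though you need to know that the trivial summand carries Frobenius eigenvalue of weight $2\ell(w)$ and that the middle degree $\ell(w)$ of $\X(I,wF)$ matches that of the ambient $\X(wF)$ under the direct-summand embedding from the proof of Proposition~\ref{boun1}.

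Part (iii) contains a genuine gap, one you yourself flag as ``the technical heart.'' The reduction to the complex $C:=\Hom_{\mathbb{Z}_\ell G}(\tilde P_{1^n},\R\Gamma_c(\X,\mathbb{Z}_\ell))$ is correct, and $C\otimes_{\mathbb{Z}_\ell}\mathbb{Q}_\ell$ is concentrated in degree $\ell(w)$ by part (ii). (A small inaccuracy: $\tilde P_{1^n}\otimes\mathbb{Q}_\ell$ need not equal $\nabla(1^n)$, since the projective cover of $S_{\mathbb{F}_\ell}(1^n)$ may carry non-unipotent constituents; the conclusion survives because only the unique unipotent constituent $\nabla(1^n)$ pairs with the unipotent complex $\R\Gamma_c(\X,\mathbb{Q}_\ell)$.) But concentration of the rationalisation does not give concentration of $C\otimes^L_{\mathbb{Z}_\ell}\mathbb{F}_\ell$: any $\mathbb{Z}_\ell$-torsion in $\h^i(C)$ for $i\neq\ell(w)$ survives reduction mod $\ell$ and, via $\mathrm{Tor}_1$, spreads over two degrees. ``Block-theoretic considerations'' do not rule this out; the known arguments for such torsion-freeness or concentration statements use geometric input (quasi-affineness of the Deligne--Lusztig variety together with projectivity of the Gelfand--Graev representation, or an equivalent device), which you have not supplied. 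The paper's own proof of (iii) does not redo any of this: it cites \cite{Gelfand} for $I=\emptyset$ and transports the statement to arbitrary $I$ by exactly the direct-summand decomposition you opened with. Your proposal for (iii) therefore stops precisely where the substantive work begins.
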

\begin{proof}
	The properties (i) and (ii) are proved in \cite[Corollary 8.38]{parabolicDM}. Property (iii) is proved in \cite{Gelfand} in the case where $I=\emptyset$.  Using a same argument as in the proof of \ref{boun1}, we can generalize it to any $I$.
\end{proof}	
By \cite{cusp_gln}, a unipotent simple $\mathbb{F}_\ell \Gl_n(q)$-module $S_{\mathbb{F}_\ell}(\lambda)$ is cuspidal if and only if $\lambda=(1^n)$ and $n=m$ where $m$ is the order of $q$ in $\mathbb{F}_\ell^{\times}$.
\begin{cro} \label{middle_degreee}
	Assume  $\ell \nmid |\mathbb{L}_I^{\dot{w}F}|$. Cuspidal $\mathbb{F}_\ell \Gl_n(q)$-modules can only occur as a composition factor in the middle degree of the cohomology of $\X(I,wF)$.
\end{cro}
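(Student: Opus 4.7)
The plan is to deduce this corollary directly from Theorem~\ref{1,st1}(iii) combined with the classification of unipotent cuspidal simples recalled just before the statement. The argument proceeds in two short steps: first show that any cuspidal composition factor of the cohomology is forced to be unipotent, then apply the degree-concentration statement of Theorem~\ref{1,st1}(iii).

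For the first step, I would argue that under the hypothesis $\ell \nmid |\bbL_I^{\dot{w}F}|$ every simple $\mathbb{F}_\ell \Gl_n(q)$-module occurring as a composition factor of $\h^i_c(\X(I,wF),\mathbb{F}_\ell)$ is unipotent. Since $|\bbL_I^{\dot{w}F}|$ is invertible in $\mathbb{F}_\ell$, the trivial module is a direct summand of $\mathbb{F}_\ell \bbL_I^{\dot{w}F}$, and by \eqref{cohomologyX} the complex $\R\Gamma_c(\X(I,wF),\mathbb{F}_\ell)$ is a direct summand of $\R\Gamma_c(\tilde{\X}(I,\dot{w}F),\mathbb{F}_\ell)$ as a complex of $\mathbb{F}_\ell G$-modules. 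The analogous statement holds over $\mathcal{O}$ and hence in characteristic zero, where the constituents are already known to be unipotent characters of $G$; reducing the integral lift modulo $\ell$ then shows that every composition factor in $\mathbb{F}_\ell$-coefficients is unipotent in the modular sense.

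For the second step, the cited result of \cite{cusp_gln} says that the only unipotent cuspidal simple $\mathbb{F}_\ell \Gl_n(q)$-module is $S_{\mathbb{F}_\ell}(1^n)$, and that it exists only when $n=m$. Combined with the first step, every cuspidal composition factor of $\h^i_c(\X(I,wF),\mathbb{F}_\ell)$ must equal $S_{\mathbb{F}_\ell}(1^n)$, and by Theorem~\ref{1,st1}(iii) this simple appears only in degree $i=\ell(w)=\dim \X(I,wF)$, which is the middle degree of the variety. The main (and essentially only) subtlety is the reduction to the unipotent case in the first step: this is where the hypothesis $\ell \nmid |\bbL_I^{\dot{w}F}|$ is genuinely used, since without it taking the quotient by the Levi in characteristic $\ell$ could in principle introduce composition factors from non-principal Lusztig series.
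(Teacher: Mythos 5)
Your proof is correct and matches the paper's intended argument: the corollary is stated there without a separate proof, as the immediate consequence of Theorem~\ref{1,st1}(iii) together with the preceding remark that the only cuspidal unipotent simple $\mathbb{F}_\ell\Gl_n(q)$-module is $S_{\mathbb{F}_\ell}(1^n)$, occurring only when $n=m$. Your preliminary step, reducing to unipotent composition factors so that the classification from \cite{cusp_gln} applies, is left implicit in the paper but is a needed and sound observation; the cleanest phrasing is that the perfect $\mathbb{Z}_\ell$-complex has unipotent $\mathbb{Q}_\ell$-character and hence lies in the unipotent blocks, so all composition factors of its $\mathbb{F}_\ell$-cohomology (including those coming from torsion) are modular unipotent simples for $\Gl_n(q)$.
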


\subsection{The Deligne--Lusztig variety $\X_{n,d}$} 
Recall from \cite[Theorem 1.2]{Digne-Michel} that $(W,S)$ is a Coxeter system.  Let $\mathbf{S}$ be a set in bijection with $S$. Then we can define the Artin-Tits braid monoid by the following presentation
\begin{align} \label{tit}
B_{W}^{+}=\langle \mathbf{s} \in \mathbf{S} \mid  \underbrace{\mathbf{sts} \cdots}_{m_{s,t}\emph{\text{ terms}}}  =\underbrace{\mathbf{tst} \cdots}_{m_{s,t}\emph{\text{ terms}}}  \rangle_{\text{mon}}.
\end{align}
Following \cite{parabolicDM}, we can extend the definition of parabolic Deligne--Lusztig varieties to the elements of the braid monoid. Let $\mathbf{I} \subset \mathbf{S}$  and $\mathbf{b} \in B^{+}_{W}$ such that 
\begin{itemize}
	\item $\mathbf{b}$ has no prefix in $B^{+}_{W_I}$(analog of being $I$-reduced) and
	\item for all $\mathbf{s} \in \mathbf{I}$, $\mathbf{b}F(\mathbf{s})\mathbf{b}^{-1} \in  \mathbf{I}$.
\end{itemize}
Then there is a corresponding parabolic Deligne--Lusztig variety $\X(\mathbf{I},\mathbf{b}F)$ such that  $\X(\mathbf{I},\mathbf{b}F) \simeq \X(I,wF)$ whenever $\mathbf{b}$ is the lift to $B^{+}_{W}$ of an element $w \in W$. We already defined the parabolic Deligne--Lusztig varieties $\X(I,wF)$ and $\tilde{\X}(I,\dot{w}F)$  associated to a pair $(I,wF)$, where $w \in W$  is $I$-reduced and  $^{wF} I =I$. Here, we define the variety $\X_{n,d}$, the variety whose cohomology we aim to study, using a specific pair $(I,w)$. We mentioned that the pair $(I,w)$ can be extend to general case $(\mathbf{I},\mathbf{b})$. In \cite{cohomology-of-DL_Dudas}, for a special pair $(\mathbf{I},\mathbf{b})$, the corresponding Deligne--Lusztig varieties are defined for $\Gl_n(q)$ for studying $\Phi_d$-blocks of $\Gl_n(q)$. This definition is due to \cite{cohomology-of-DL_Dudas} and \cite{parabolicDM}.
\begin{dfn}
	Let $1 \leq d \leq n$. Consider the following pairs
	\begin{align*}
	&& \mathbf{v}_d=\mathbf{s}_1\mathbf{s}_2 \ldots  \mathbf{s}_{n-1-[d/2]} \mathbf{s}_{n-1}\mathbf{s}_{n-2} \ldots \mathbf{s}_{[d+1/2]} \in B^{+} &&\\
	\text{and} && \mathbf{J}_d=\{\mathbf{s}_i \mid [d+1/2]+1 \leq i \leq n-1-[d/2]\} \subset \mathbf{S}.
	\end{align*}		
	Then 
	$$\X_{n,d}:=\X(\mathbf{J}_{d},\mathbf{v}_d F).$$
	The Galois covering $\X_{n,d}$ will be denoted by $ \tilde{\X}_{n,d}$
\end{dfn}	
Note that for $d > 1$, the element $\mathbf{v}_d$ is reduced. Therefore, it is the lift of an element $v_d \in W$ and therefore we can work with the variety $\X(J_d,v_d F)$ instead. By \cite[Lemma 11.7, 11.8]{parabolicDM}, the pair $(\mathbf{J}_d,\mathbf{v}_d)$ is $d$-periodic
 so that it makes sense to study the cohomology of $\X_{n,d}$. 
\smallskip

The group $\Gl_n(q)$ acts on the Deligne--Lusztig variety $\tilde{\X}_{n,d}$ by left multiplication and the Levi subgroup  $\mathbb{L}_{I}^{\dot{w}F}  \simeq \Gl_1(q^d) \times \Gl_{n-d}(q)$ acts by right multiplication. Also $\Gl_n(q)$ acts on $\X_{n,d}$ by left multiplication.

\subsection{Cohomology of $\X_{n,d}$ over $\mathbb{Q}_\ell$}

Let $\mu$ be a partition of $n-d$ with 
 the corresponding unipotent  character $\nabla(\mu)$ of $\Gl_{n-d}(q)$. It defines a local system on the variety $\X_{n,d}$ such that 
\begin{center}
	$\h^{\bullet}_c(\X_{n,d},\nabla(\mu)) \simeq \h^{\bullet}_c(\tilde{\X}_{n,d},\mathbb{Q}_\ell)_{\nabla(\mu)}$. 
\end{center}	
\begin{thm} [{\cite[Theorem 2.1]{cohomology-of-DL_Dudas}}] \label{localsys}
	Let $\mu$ be a partition of $n-d$. Write 
	$$ ^*\! \R^{\Gl_{n-d}} _{\Gl_{n-d-1}}(\nabla(\mu)) = \sum_i \nabla(\mu^{(i)})$$
	where the $\mu^{(i)}$'s are partitions of $n-d-1$. Then there exists a distinguished triangle in $D^{b}(\mathbb{Q}_\ell \Gl_{n-1}(q) \times \langle F\rangle\mmod)$
	\begin{small}
		\[
		\xymatrixcolsep{0.4cm}
		\xymatrixrowsep{0.4cm}
		\xymatrix{\R\Gamma_c(\mathbb{G}_m \times \X_{n-1,d-1},\nabla(\mu))  \ar[r] & {}^*\mathscr{R}_{\Gl_{n-1}}^{\Gl_n} \big( \R \Gamma_c(\X_{n,d},\nabla(\mu)) \big)\ar[r] & \displaystyle
			\bigoplus_i \R \Gamma_c(\X_{n-1,d},\nabla(\mu^{(i)})) [-2](1) \ar@{~>}[r] & }	\] 
	\end{small}
	where (1) is the Tate twist.
\end{thm}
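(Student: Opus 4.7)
The plan is to interpret the middle term geometrically and then apply a Bruhat stratification. First, by the definition of $^*\mathscr{R}^{\Gl_n}_{\Gl_{n-1}}$ as an $\R\Hom$ (or, dually, as a tensor product with the dual of the restriction complex) and the identification \eqref{cohomologyX}, one rewrites $^*\mathscr{R}^{\Gl_n}_{\Gl_{n-1}}\bigl(\rgc(\X_{n,d},\nabla(\mu))\bigr)$ as the $\nabla(\mu)$-isotypic component of the compactly supported cohomology of a quotient of the product variety $\tilde{\X}(\Gl_{n-1}) \times \tilde{\X}_{n,d}$ by the diagonal action of $\Gl_n(q)$. Using the parabolic braid-variety formalism of \cite{parabolicDM}, this quotient is itself a parabolic Deligne--Lusztig variety for $\Gl_{n-1}$, whose defining pair is obtained by concatenating the braid of $^*\mathscr{R}^{\Gl_n}_{\Gl_{n-1}}$ with $(\mathbf{J}_d,\mathbf{v}_d)$.

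Next, I would stratify this fiber-product variety by Bruhat cells indexed by $(W_{\mathbf{J}_d}, W_{\Gl_{n-1}})$-double cosets in $\mathfrak{S}_n$. For the specific element $\mathbf{v}_d$, which contains the simple reflection $\mathbf{s}_{n-1}$ exactly once, only two double cosets contribute: a closed stratum, where $\mathbf{s}_{n-1}$ is absorbed into the Weyl group of $\Gl_{n-1}$, and its open complement. Using the braid identities from \eqref{tit} and the $d$-periodicity of $(\mathbf{J}_d,\mathbf{v}_d)$ recorded in \cite[Lemma 11.7, 11.8]{parabolicDM}, one identifies the closed stratum with $\mathbb{G}_m \times \tilde{\X}_{n-1,d-1}$ (the $\mathbb{G}_m$-factor records the splitting $\Gl_1(q^d) \leftrightarrow \Gl_1(q) \times \Gl_1(q^{d-1})$ forced by moving $\mathbf{s}_{n-1}$ into $\Gl_{n-1}$), while the open stratum is an affine-line bundle over $\tilde{\X}_{n-1,d}$, which accounts for the shift $[-2]$ and the Tate twist $(1)$ in the target term.

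The open-closed excision distinguished triangle then produces the asserted triangle, after taking the $\nabla(\mu)$-isotypic component with respect to the right action of $\Gl_1(q^d) \times \Gl_{n-d}(q)$. The rightmost term inherits a local system from the open stratum, which by construction fibers over $\tilde{\X}_{n-1,d}$ with fibers along which $\nabla(\mu)$ descends via the Harish--Chandra restriction from $\Gl_{n-d}(q)$ to $\Gl_1(q) \times \Gl_{n-d-1}(q)$. Decomposing this restriction yields exactly $\bigoplus_i \nabla(\mu^{(i)})$ as in the formula for $^*\R^{\Gl_{n-d}}_{\Gl_{n-d-1}}(\nabla(\mu))$, producing the claimed direct sum.

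The main obstacle is the identification carried out in the second paragraph: showing that only two Bruhat strata contribute, and matching each of them with the claimed Deligne--Lusztig variety (up to the $\mathbb{G}_m$-factor or the affine-line bundle). This requires explicit manipulations of the braid $\mathbf{v}_d = \mathbf{s}_1 \cdots \mathbf{s}_{n-1-[d/2]} \mathbf{s}_{n-1} \cdots \mathbf{s}_{[d+1/2]}$ via the Artin--Tits relations and keeping careful track of how the Levi structure behaves under the concatenation of braids, which is where the bulk of the technical work lies.
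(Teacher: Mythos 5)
The paper does not reprove this statement; it cites Dudas's Theorem 2.1 and instead proves its integral-coefficient extension \ref{generallong} by the same method: decompose $\tilde{\X}_{n,d}$ into two pieces $\tilde{\X}_{z_0}\sqcup\tilde{\X}_{z_1}$ with $\tilde{\X}_{z_0}$ \emph{open}, compute $\rgc(U\backslash\tilde{\X}_{z_i})$ for each using the quotient result \cite[Proposition 3.2]{quotient}, and assemble them via the open-closed excision triangle, using that ${}^*\mathscr{R}^{\Gl_n}_{\Gl_{n-1}}$ is realized by taking $U$-fixed points. Your plan is recognizably the same strategy (two-stratum decomposition plus excision, with the local system on one stratum decomposing according to Harish-Chandra restriction of $\nabla(\mu)$), though you phrase the middle term via a fiber-product interpretation of the restriction functor rather than directly as $U$-fixed points, and you propose to identify the strata by hand through braid manipulations rather than by appealing to the quotient proposition. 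Those are legitimate alternative routes to the same identifications.

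There is, however, one concrete error: you have the open/closed assignment of the two strata backwards, and this is not a matter of convention. The excision triangle for compactly supported cohomology runs $\rgc(\text{open})\longrightarrow\rgc(\text{total})\longrightarrow\rgc(\text{closed})\longrightarrow[1]$, so the stratum feeding the first term of the asserted triangle must be the open one. In the statement, the first term is $\rgc(\mathbb{G}_m\times\X_{n-1,d-1},\nabla(\mu))$ and the third, $[-2](1)$-shifted term is $\bigoplus_i\rgc(\X_{n-1,d},\nabla(\mu^{(i)}))[-2](1)$; correspondingly the paper takes $\tilde{\X}_{z_0}$ (which produces $\mathbb{G}_m\times\X_{n-1,d-1}$) to be \emph{open} and $\tilde{\X}_{z_1}$ (whose $U$-quotient is the affine-bundle piece responsible for the $[-2](1)$ shift) to be \emph{closed}. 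You claim the opposite: that $\mathbb{G}_m\times\tilde{\X}_{n-1,d-1}$ is the closed stratum and the affine-line bundle over $\tilde{\X}_{n-1,d}$ is its open complement. With that labeling the excision triangle would place $\rgc(\X_{n-1,d})[-2](1)$ in the first slot and $\rgc(\mathbb{G}_m\times\X_{n-1,d-1})$ in the third, i.e.\ the outer terms swapped relative to the theorem, so the argument as written would produce the wrong triangle. Before setting up excision you should determine which stratum is actually open (the shape of the target triangle already dictates it), and then the rest of your outline goes through.
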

In \cite{cohomology-of-DL_Dudas}, a formula is given for computing the cohomology groups of $\X_{n,d}$ with coefficients in $\nabla(\mu)$. 
The $\nabla(\lambda)$'s occurring in $\h^{\bullet}(\X_{n,d},\nabla(\mu))$ are associated to the partitions obtained from $\mu$ by adding a $d$-hook.   
\begin{thm}[{\cite{cohomology-of-DL_Dudas}}] \label{n-d} \label{Xnd1}
	Let $\mu$ be a partition of $n-d$ and $X$ be a $\beta$-set of $\mu$  and  let $X^{\prime}=\{x \in X \mid x+d \notin X  \}$. Given $x \in X'$, define
	\begin{center}
		$\pi_{d}(X,x)=2(n-1+x - \# \{y \in X \mid y< x \}) - \# \{y \in X \mid x <y<x+d \}$
	\end{center}	
	and 
	\begin{center}
		$\gamma_d(X,x)=n+x- \# X$.
	\end{center}	
	Then $\nabla(\mu*x)$ occurs in $\h^{i}_c(\X_{n,d},\nabla(\mu))$ for $i=\pi_{d}(X,x)$ only. Furthermore, it occurs with multiplicity one and with eigenvalue of $F$ equal to $q^{\gamma_d(X,x)}$. In other words, 
	$$\R\Gamma_{c}(\X_{n,d},\nabla(\mu))\simeq \bigoplus_{x \in X^{\prime}} \nabla(\mu *x)[-\pi_{d}(X,x)] \langle q^{\gamma_d(X,x)}\rangle $$
	in  $D^b(\mathbb{Q}_\ell \Gl_n(q) \times \langle F\rangle )$-modules.
\end{thm}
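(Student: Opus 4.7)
The plan is to prove the theorem by induction on $n$, with base case $d=n$ (the Coxeter variety, Lusztig \cite{Lusztig-coxeter}), and Theorem \ref{localsys} driving the inductive step. For $d<n$, the two outer terms of the triangle involve $\X_{n-1,d-1}$ and $\X_{n-1,d}$, both of which are accessible by the inductive hypothesis since $n-1<n$. One checks separately that in the base case Lusztig's formula matches the $\beta$-set formulas for $\pi_d$ and $\gamma_d$ given here.

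For the inductive step, I would compute the two outer terms of the triangle using the induction hypothesis, together with the computation $\R\Gamma_c(\mathbb{G}_m,\mathbb{Q}_\ell)\simeq \mathbb{Q}_\ell[-1]\oplus\mathbb{Q}_\ell[-2](1)$ and Künneth. In characteristic zero every bounded complex in $D^b(\mathbb{Q}_\ell\Gl_n(q)\mmod)$ is a direct sum of shifts of its cohomology, so the triangle reduces to an equality of Grothendieck classes with Frobenius weights in each cohomological degree, yielding an explicit formula for ${}^*\mathscr{R}^{\Gl_n}_{\Gl_{n-1}}\R\Gamma_c(\X_{n,d},\nabla(\mu))$.

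To recover $\R\Gamma_c(\X_{n,d},\nabla(\mu))$ itself from its Harish--Chandra restriction, I would invoke the $\Phi_d$-block theory of Theorem \ref{cupair2} together with Theorem \ref{1,st1}: the only unipotent characters that can occur in the cohomology are the $\nabla(\mu*x)$ for $x$ an addable $d$-hook of $\mu$. On the subspace of the Grothendieck group spanned by these characters, Harish--Chandra restriction to $\Gl_{n-1}$ is injective, because from the multiset of partitions obtained by removing a single box from $\mu*x$ one can reconstruct $\mu*x$, and hence $x$. The Grothendieck class in each cohomological degree is therefore uniquely determined, and by semisimplicity of $\mathbb{Q}_\ell\Gl_n(q)\mmod$ so is the complex itself.

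The main obstacle will be the combinatorial matching: verifying that the formula produced by the triangle coincides on the nose with the claimed expression $\bigoplus_{x\in X'}\nabla(\mu*x)[-\pi_d(X,x)]\langle q^{\gamma_d(X,x)}\rangle$. This amounts to checking how the $\beta$-set statistics $\pi_d$ and $\gamma_d$ transform when one passes from $\mu$ to a partition $\mu^{(i)}$ obtained by removing a box, and how the overcounting---since a single addable $d$-hook of $\mu$ may be reached from several $\mu^{(i)}$---cancels exactly against the contribution of $\R\Gamma_c(\mathbb{G}_m\times\X_{n-1,d-1},\nabla(\mu))$, both in cohomological degree and in Frobenius weight. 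This bookkeeping with $\beta$-sets, matching the Harish--Chandra branching rule (removal of one box) with $d$-hook addition, is where the real work of the proof lies.
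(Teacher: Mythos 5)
This statement is reproduced verbatim from \cite{cohomology-of-DL_Dudas}; the present paper gives no proof of it, only the citation, so there is no ``paper's own proof'' to compare against. Your outline --- induction on $n$ with the Coxeter case $d=n$ as base, driven by the distinguished triangle of Theorem~\ref{localsys}, followed by a recovery step via Harish--Chandra restriction and the $\beta$-set combinatorics --- does reflect the architecture of Dudas's argument, and you correctly identify the degree/weight bookkeeping as the locus of the real work.

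There is, however, a concrete gap in the middle of the sketch: the passage from the distinguished triangle to ``an equality of Grothendieck classes with Frobenius weights in each cohomological degree.'' Semisimplicity of $\mathbb{Q}_\ell \Gl_n(q)\mmod$ makes each of the three complexes formal (a direct sum of shifts of its cohomology), but a distinguished triangle $A \to B \to C \to A[1]$ between formal complexes does \emph{not} split degree by degree: it yields a long exact sequence whose connecting maps $\h^{i}(C) \to \h^{i+1}(A)$ need not vanish, and only the $F$-graded Euler characteristic is automatically additive across the triangle. In fact, the overcounting cancellation you identify --- a single addable $d$-hook of $\mu$ being reached from several $\mu^{(i)}$ --- is implemented precisely by nonzero connecting maps against the $\mathbb{G}_m \times \X_{n-1,d-1}$ term; note that $\R\Gamma_c(\mathbb{G}_m,\mathbb{Q}_\ell)\simeq \mathbb{Q}_\ell[-1]\oplus\mathbb{Q}_\ell(1)[-2]$ contributes the virtual class $q-1$ in consecutive degrees rather than a single honest summand. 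So to extract each $\h^i$ one must argue, $F$-eigenvalue by $F$-eigenvalue, that the contributions either land in disjoint degrees (so the boundary maps vanish for weight reasons) or pair off in consecutive degrees and cancel. That degree-and-weight analysis is exactly what you have deferred, and it cannot be bypassed by reading the answer off the triangle as if it were a split short exact sequence; it is an essential part of the inductive step, not just an afterthought of combinatorial matching.
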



Let $\mu$ be the trivial partition $(n-d)$. Then $\nabla(\mu)$ is the trivial representation and corresponds to the trivial local system $\mathbb{Q}_{\ell}$ on the variety $\X_{n,d}$  and its restriction  $^*\! \R^{\Gl_{n-d}} _{\Gl_{n-d-1}}(\nabla(\mu))$ corresponds to the trivial local system on $\X_{n-1,d}$. Consequently \ref{localsys} can be written as follows in that specific case.
\begin{cro} 
	There is a distinguished triangle in $D^b(\mathbb{Q}_\ell \Gl_{n-1}(q) \times \langle F\rangle\mmod)$
	\begin{small}
		\[
		\xymatrixcolsep{0.5cm}
		\xymatrixrowsep{0.5cm}
		\xymatrix{\R\Gamma_c(\X_{n-1,d-1} \times \mathbb{G}_m,\mathbb{Q}_\ell) \ar[r] &  ^* \! \mathscr{R}^{\Gl_{n}}_{\Gl_{n-1}} \R \Gamma_c(\X_{n,d},\mathbb{Q}_\ell) \ar[r] & 
			\R \Gamma_c(\X_{n-1,d},\mathbb{Q}_\ell )[-2](1) \ar@{~>}[r] & }.	\] 
	\end{small}
\end{cro}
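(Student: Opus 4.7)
The plan is to specialize Theorem \ref{localsys} to the trivial partition $\mu=(n-d)$ of $n-d$. With this choice, $\nabla(\mu)=1_{\Gl_{n-d}(q)}$ is the trivial character, which extends to the trivial character of the full Levi $\mathbb{L}_I^{\dot wF}\simeq \Gl_1(q^d)\times \Gl_{n-d}(q)$. The trivial-isotypic part of $\R\Gamma_c(\tilde{\X}_{n,d},\mathbb{Q}_\ell)$ is the subcomplex of $\mathbb{L}_I^{\dot wF}$-invariants, which by \eqref{eq:DLquo} coincides with $\R\Gamma_c(\X_{n,d},\mathbb{Q}_\ell)$. The same observation applies to the local system $\nabla(\mu)$ occurring on $\X_{n-1,d-1}$, so the left-hand term of the triangle in Theorem \ref{localsys} specializes to $\R\Gamma_c(\X_{n-1,d-1}\times \mathbb{G}_m,\mathbb{Q}_\ell)$.

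Next I would identify the partitions $\mu^{(i)}$ that index the right-hand sum in Theorem \ref{localsys}, namely the irreducible constituents of ${}^*\R^{\Gl_{n-d}}_{\Gl_{n-d-1}}(1)$. By the biadjunction between Harish-Chandra induction and restriction, the multiplicity of $\nabla(\mu^{(i)})$ in this restriction equals the multiplicity of the trivial character $\nabla((n-d))$ in $\R^{\Gl_{n-d}}_{\Gl_{n-d-1}}(\nabla(\mu^{(i)}))$. Applying the $d$-hook formula for Harish-Chandra induction in the special case $d=1$ (that is, the formula $\R^{\bbG}_{\bbL_I}(\nabla(\mu))=\sum_x \epsilon_x \nabla(\mu*x)$ recalled earlier, applied to partitions of $n-d-1$), this induction decomposes as a signed sum of $\nabla(\lambda)$ with $\lambda$ obtained from $\mu^{(i)}$ by adding a single box; the partition $\lambda=(n-d)$ arises in exactly one way, namely from $\mu^{(i)}=(n-d-1)$. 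Consequently ${}^*\R^{\Gl_{n-d}}_{\Gl_{n-d-1}}(1)=1_{\Gl_{n-d-1}(q)}$, and the sum indexed by the $\mu^{(i)}$ collapses to the single term $\R\Gamma_c(\X_{n-1,d},\mathbb{Q}_\ell)$.

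Substituting these identifications into the distinguished triangle of Theorem \ref{localsys} yields exactly the stated triangle. There is no serious obstacle here: the corollary is a direct specialization of Theorem \ref{localsys}, and the only substantive input is the elementary classical computation that the Harish-Chandra restriction of the trivial character is again the trivial character.
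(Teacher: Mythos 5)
Your proposal is correct and follows exactly the same route as the paper: specialize Theorem \ref{localsys} to the trivial partition $\mu=(n-d)$ and observe that both $\nabla(\mu)$ and its Harish-Chandra restriction ${}^*\R^{\Gl_{n-d}}_{\Gl_{n-d-1}}(\nabla(\mu))$ are trivial, so the sum on the right collapses to a single term. The paper merely asserts this last fact, whereas you supply the branching-rule justification via adjunction and the $d=1$ hook-adding formula (where, note, the signs $\epsilon_x$ are all $+1$, so ``signed sum'' is a harmless overstatement); this is a correct and reasonable filling-in of detail.
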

We are now going to describe explicitly the various invariants introduced before in the case where $\mu$ is the trivial partition $(n-d)$. We take $X=\{n,d-1,\cdots,1,0 \}$ to be a $\beta$-set for $\mu$ by adding $d$ zeros to the partition $(n-d)$. It is large enough for adding and removing $d$-hooks. Depending on $n$ and $d$, there are two possibilities for the set $X^{\prime}$ of addable $d$-hooks
\begin{equation}
X^{\prime}=\begin{cases}
X & \text{if }  n-d \geq d,\\
X \setminus \{n-d\} &  \text{otherwise}.
\end{cases}
\end{equation} \label{Xprime}
Let $x \in X'$. We have 
\begin{equation*} 
\# \{y \in X \mid y< x \}=\begin{cases}
d & \text{if }   x=n,\\
x & \text{if }  x<d,
\end{cases}
\end{equation*}
and
\begin{equation*} 
\# \{y \in X \mid x <y<x+d \}=\begin{cases}
0 & \text{if }  x=n,\\
d-x-1 & \text{if }   x \leq  n-d,\\
d-x & \text{if }  n-d<x<n.
\end{cases}
\end{equation*}
We deduce that 
\begin{equation} 
\pi_{d}(X,x)=\begin{cases}
2(2n-d-1) & \text{if }   x=n,\\
2n-1-d+x & \text{if }   x \leq  n-d,\\
2n-2-d+x & \text{if }   n-d <x <n.
\end{cases}
\end{equation}
Also $\gamma_d(X,x)=n+x-d-1$.
\smallskip

For $x \in X^{\prime}$, recall that $\mu*x$ is the partition of $n$ obtained from $\mu$ by adding the $d$-hook $(x,x+d)$. Depending on $x$ it is given by
\begin{equation} \label{mu}
\mu*x=\begin{cases}
(n) & \text{if }   x=n,\\
(n-d,x+1,1^{d-x-1}) & \text{if }   x  < n-d \text{ and } x\leq d-1, \\
(x,n-d+1,1^{d-x-1}) &\text{if }   n-d \leq x < d-1,\\
0 & \text{otherwise}.
\end{cases}
\end{equation}
This can be seen using  the $1$-abacus of $\mu$. For example let us consider the case where $x < n-d$ and $x \leq d-1$. The $1$-abacus corresponding to $x$ for the trivial partition $(n-d) \vdash n-d$ is
\begin{small}
	\xymatrixcolsep{0,7cm}
	\[\xymatrix{
		\cdots \ar@{-}[r] & \nod{} \ar@{-}[r] &	\node{n} \ar@{-}[r]  & \nod{n-1} \ar@{-}[r] &\nod{n-2} \ar@{-}[r]  & \nod{} \cdots \hspace{-1cm} & \nod{}  \ar@{-}[r] & \node{d-1} \cdots \hspace{-1cm} &\node{x+1} \ar@{-}[r]  & \node{x} \cdots \hspace{-1cm} & \node{2}  \ar@{-}[r]  & \node{1} \ar@{-}[r] & \node{0}
	}\]
\end{small} 
\noindent The black bead $x$ is located  between $0$ and $d-1$. Adding the $d$-hook corresponding to $x$ amounts to move it $d$ steps to the left. It will be between $d-1$ and $n$. 
In that case the number of white beads between $n$ and $x+d$ is $n-(x+d)-1$. The number of white beads between $x+d$ and $d-1$ is $(x+d)-(d-1)-1=x$. Finally, there is one white bead at $x$. Recall that the number of white beads to the right side of each black bead gives us the partition $\lambda$. The number of white beads in the right side of the bead $n-1$ is $(n-(x+d)-1) + x + 1 = n-d$; the number of white beads to the right of $x+d$ is $x+1$. Finally, there is only at most one white bead to the right of the remaining black beads. This gives
\begin{center}
	$\lambda=(n-d,x+1,1^{d-x-1})$.
\end{center}	
\smallskip

We can now spell out \ref{Xnd1}  for the trivial partition $\mu=(n-d)$ using the previous computations. This makes the  cohomology groups of the Deligne--Lusztig variety $\X_{n,d}$ over $\mathbb{Q}_\ell$ explicit.
\begin{thm} \label{Xnd}
	Let  $ 1\leq d \leq n$. Then the cohomology of $\X_{n,d}$ over $\mathbb{Q}_\ell$ is explicitly given by 
	$$
	\begin{small}
	\begin{array}{c|cccccc} i & 2n-1-d & 2n-d & 2n-d+1 & \cdots &  3n-2d-2  \\\hline \\[0.08cm]
	\h^{i}_c(\X_{n,d},\mathbb{Q}_\ell)& \nabla(n-d,1^d)  & \nabla(n-d,21^{d-2})  & \nabla(n-d,31^{d-3}) & \cdots &\nabla((n-d)^2,1^{2d-n})   \\[0.3cm]
	F & q^{n-d-1} & q^{n-d} & q^{n-d+1} & \cdots & q^{2n-2d-2}&\\[0.3cm]
	\end{array}
	\end{small}
	$$
	\\
	$$
	\begin{small}
	\begin{array}{c|ccc}
	i & 3n-2d-1 &  3n-2d \\\hline \\[0.08cm]
	\h^{i}_c(\X_{n,d},\mathbb{Q}_\ell) &\nabla((n-d+1)^2,1^{2d-n-2}) & \nabla(n-d+2,n-d+1,1^{2d-n-3}) \\[0.3cm]
	F &q^{2n-2d}  & q^{2n-2d+1}   \\[0.3cm]
	\end{array}
	\end{small}
	$$
	\\
	$$
	\begin{small}
	\begin{array}{c|cccc}
	i &  3n-2d+1& \cdots& 2n-4  \\\hline \\[0.08cm]
	\h^{i}_c(\X_{n,d},\mathbb{Q}_\ell) &\nabla(n-d+3,n-d+1,1^{2d-n-4})  & \cdots & \nabla(d-2,n-d+1,1) \\[0.3cm]
	F &q^{2n-2d+2} & \cdots  & q^{n-3}  & \\[0.3cm]
	\end{array}
	\end{small}
	$$
	\\
	$$
	\begin{small}
	\begin{array}{c|ccccc}
	i & 2n-3 & 2n-2 & \cdots &4n-2d-2 \\\hline \\[0.08cm]
	\h^{i}_c(\X_{n,d},\mathbb{Q}_\ell)  & \nabla(d-1,n-d+1) & 0& 0 & \nabla(n) \\[0.3cm]
	F  &  q^{n-2}  & 0 & 0  & q^{2n-1-d} \\[0.3cm]
	\end{array}
	\end{small}
	$$
\end{thm}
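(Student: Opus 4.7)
The plan is to obtain this statement as a direct specialization of Theorem \ref{Xnd1} to the trivial partition $\mu=(n-d)$ of $n-d$, using the explicit computations of $X'$, $\pi_d(X,x)$, $\gamma_d(X,x)$, and $\mu*x$ performed immediately before the statement. There is no new idea here: all the work has already been isolated, and what remains is a careful bookkeeping, so I would present this more as a ``reading off the table'' than as an argument.

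First I would fix the $\beta$-set $X=\{n,d-1,d-2,\dots,1,0\}$ for $\mu=(n-d)$, which was chosen to be large enough (we added $d$ zeros) to be stable under adding any $d$-hook. Then from the description of $X'$ given in \eqref{Xprime}, every $x\in X$ is addable except possibly $x=n-d$, which must be removed when $n-d<d$. I would then tabulate, for each $x\in X'$, the three quantities $\pi_d(X,x)$, $\gamma_d(X,x)$, and the resulting partition $\mu*x$ using the formulas \eqref{mu} established above via the $1$-abacus visualization. This gives the correspondence $x\mapsto(\pi_d(X,x),\mu*x,q^{\gamma_d(X,x)})$, namely:
\begin{itemize}
\item $x=n$ contributes $\nabla(n)$ in degree $2(2n-d-1)$ with Frobenius eigenvalue $q^{2n-d-1}$ (this matches the last column);
\item $x\leq \min(n-d,d-1)$ contributes $\nabla(n-d,x+1,1^{d-x-1})$ in degree $2n-1-d+x$ with Frobenius eigenvalue $q^{n-d-1+x}$;
\item $n-d\leq x<d-1$ (only relevant when $n<2d-1$) contributes $\nabla(x+1,n-d+1,1^{d-x-2})$ in degree $2n-2-d+x$ with Frobenius eigenvalue $q^{n-d-1+x}$ (note the shift in the notation $\nabla(\mu*(x+1))$ used in the final mod-$\ell$ theorem is consistent with this reindexing).
\end{itemize}
Once these three families are written out in increasing order of $x$, which corresponds to increasing degree $i$, one obtains the four partial tables displayed in the statement.

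Finally, I would verify that the intermediate degrees $i$ not appearing in any of the three families give $\h^i_c(\X_{n,d},\mathbb{Q}_\ell)=0$: by Theorem \ref{Xnd1} the cohomology is concentrated in degrees of the form $\pi_d(X,x)$ for $x\in X'$, and our three families above list all such values; in particular between degree $2n-3$ (the $x=d-2$ contribution) and degree $2(2n-d-1)$ (the $x=n$ contribution) no other $x\in X'$ produces a degree, so the cohomology vanishes in that range, which matches the zeros in the final row. The only mild obstacle is checking that the two possible values of $X'$ in \eqref{Xprime} really give the same table (in the case $n-d\geq d$, the element $x=n-d$ is included and lies in the first family, while in the case $n-d<d$ it is excluded but no partition $\mu*x$ would have been produced for it anyway since $x=n-d$ sits on the boundary between the two regimes of \eqref{mu}); this is a small case check that I would dispatch by direct inspection of the abacus.
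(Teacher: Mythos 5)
Your strategy is exactly the one the paper uses: specialize Theorem~\ref{Xnd1} to the trivial partition $\mu=(n-d)$ and read the table off the explicit computations of $X'$, $\pi_d(X,x)$, $\gamma_d(X,x)$ and $\mu * x$ that precede the statement. The first and second bullets are correct. But the third bullet contains a genuine off-by-one inconsistency that would produce a wrong table. You have re-indexed so that $x$ now denotes the degree offset rather than the $\beta$-set element (hence $\mu*(x+1)=(x+1,n-d+1,1^{d-x-2})$, as you note), yet you left the degree and eigenvalue formulas in the un-shifted variable. If the $\beta$-set element is $x+1$, then the degree must be $\pi_d(X,x+1)=2n-2-d+(x+1)=2n-1-d+x$ and the Frobenius eigenvalue $q^{\gamma_d(X,x+1)}=q^{n-d+x}$; you wrote $2n-2-d+x$ and $q^{n-d-1+x}$ instead. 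For instance, at $x=n-d$ your formulas place $\nabla((n-d+1)^2,1^{2d-n-2})$ in degree $3n-2d-2$ with eigenvalue $q^{2n-2d-1}$, whereas the correct degree is $3n-2d-1$ and the correct eigenvalue is $q^{2n-2d}$. This shift propagates: the last contribution of your third family lands in degree $2n-4$ rather than $2n-3$, and your vanishing range would begin at $2n-3$ rather than $2n-2$, which disagrees with the stated table. The fix is straightforward (either keep $x$ as the $\beta$-set element throughout, with partition $(x,n-d+1,1^{d-x-1})$ and range $n-d<x\leq d-1$, or shift all three formulas consistently), but as written the proof does not yield the claimed tables. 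A minor secondary point: writing the first family as ``$x\leq \min(n-d,d-1)$'' includes $x=n-d$ when $n-d<d$; this is harmless since $n-d\notin X'$, but the formula $(n-d,x+1,1^{d-x-1})$ does not even give a partition at $x=n-d$, so the strict inequality $x<n-d$ (together with $x\leq d-1$) is the safe way to state it.
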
	
We note that the first zero term occurs in degree $2n-2$ and the last zero term occurs in degree  $4n-2d-3$. The number of zero terms in the table is $2n-2d$. In particular, when $n=d$ only (the Coxeter case) all the cohomology groups between middle and top degrees are non-zero. 
\smallskip

For the sake of the reader, we also give the cohomology groups  of the Deligne--Lusztig variety $\X_{n,d}$ in terms of $ \mu * x$. We distinguish the cases depending on what $X'$ is.

\noindent
$\bullet$ Assume that $n+1<2d$. Then for 
$x \in \{d-2,\dots,2,1,0\} \cup \{2n-d-1\}$:
\begin{equation}\label{mu1}
\h_c^{2n-d-1+x}(\X_{n,d},\mathbb{Q}_\ell)=\begin{cases}
\nabla(\mu * x) \langle q^{n-d-1+x} \rangle &  x<n-d,\\
\nabla(\mu * (x+1)) \langle q^{n-d+x}\rangle &  n-d\leq x < d-1, \\
\nabla(n) \langle q^x\rangle & x=2n-d-1, \\
0 &\text{otherwise}.
\end{cases}
\end{equation}
\noindent
$\bullet$ Assume that $n +1 >2d$. Then $x \in \{d-1,\dots,2,1,0\} \cup \{2n-d-1\}$
\begin{equation}
\h_c^{2n-d-1+x}(\X_{n,d},\mathbb{Q}_\ell)=\begin{cases}
\nabla(\mu * x) \langle q^{n-d-1+x}\rangle &  x\leq d-1, \\
\nabla(n) \langle q^x \rangle & x=2n-d-1,
\\
0 & \text{otherwise}.
\end{cases}
\end{equation}
\noindent
$\bullet$ Assume that $n+1=2d$. Then for $x \in \{d-2,\dots,2,1,0\}\cup \{2n-d-1\}$
\begin{equation}
\h_c^{2n-d-1+x}(\X_{n,d},\mathbb{Q}_\ell)=\begin{cases}
\nabla(\mu * x) \langle q^{n-d-1+x}\rangle  &  x\leq d-2,\\
\nabla(n) \langle q^x\rangle & x=2n-d-1,\\
0 & \text{otherwise}.
\end{cases}
\end{equation}
One can actually  group the three cases together as follows:
	For $x \in \{d-1,\ldots,2,1,0\} \cup \{2n-d-1\}$,
	$$  \h_c^{2n-d-1+x}(\X_{n,d},\mathbb{Q}_\ell)=\begin{cases}
	\nabla(\mu * x) \langle q^{n-d-1+x}\rangle  &  x<n-d, \text{ and } x\leq d-1, \\
	\nabla(\mu * (x+1)) \langle q^{n-d+x}\rangle  &  n-d\leq x < d-1,  \\
	\nabla(n) \langle q^x \rangle  & x=2n-d-1,\\
	0  & \text{otherwise}.
	\end{cases} $$

\subsection{Cohomology of $\protect \xnd$ over $\mathbb{Z}_\ell$}
We will determine explicitly  the cohomology groups of the Deligne--Lusztig varieties $\X_{n,d}$ over $\mathbb{Z}_\ell$. We will work under the assumption that $\ell \mid \Phi_m(q)$ where $m>n-d+1$ and $m>d$ and $m>6$. In that situation the cohomology complex is perfect and we will show that its cohomology is torsion-free.
\subsubsection{Statement of the main results}

We start by extending \ref{localsys} to any ring $R$ among the modular system $(\mathbb{Q}_\ell,\mathbb{Z}_\ell,\mathbb{F}_\ell)$:
\begin{thm}\label{Cnd}  \label{generallong}
	There is a distinguished triangle in $D^b(R \Gl_{n-1}(q) \times \langle F\rangle\mmod)$
	\begin{small}
		\[
		\xymatrixcolsep{0.5cm}
		\xymatrixrowsep{0.5cm}
		\xymatrix{\R\Gamma_c(\X_{n-1,d-1} \times \mathbb{G}_m,R) \ar[r] &  ^* \! \mathscr{R}^{\Gl_{n}}_{\Gl_{n-1}} \R \Gamma_c(\X_{n,d},R) \ar[r] & 
			\R \Gamma_c(\X_{n-1,d},R )[-2](1) \ar@{~>}[r] & }.	\] 
	\end{small}
\end{thm}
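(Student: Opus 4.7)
The plan is to extend the geometric argument behind Theorem \ref{localsys} (Dudas's result over $\mathbb{Q}_\ell$) to arbitrary coefficients $R \in \{\mathbb{Q}_\ell, \mathbb{Z}_\ell, \mathbb{F}_\ell\}$. The key observation is that the distinguished triangle in question is produced from an open/closed decomposition of an auxiliary variety, and the excision triangle in $\ell$-adic \'etale cohomology is valid over any of our coefficient rings. Hence no modification of the underlying geometric construction is needed; only a verification that each step is characteristic-independent.

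First, I would realize ${}^*\mathscr{R}^{\Gl_n}_{\Gl_{n-1}}$ geometrically. Since $\Gl_{n-1}$ is the Levi of the standard maximal parabolic $\bbP_{n-1} \subset \Gl_n$ with unipotent radical $\bbU_{n-1}$, the Harish-Chandra restriction amounts, at the derived level, to computing $\rgc$ of a suitable quotient by $\bbU_{n-1}(q)$. Concretely, ${}^*\!\mathscr{R}^{\Gl_n}_{\Gl_{n-1}} \rgc(\X_{n,d}, R)$ is represented by the cohomology of a variety obtained by pulling back $\X_{n,d}$ along the projection $\bbG/\bbP_{\mathbf{J}_d} \to \bbG/\bbP_{n-1} = \mathbb{P}^{n-1}$. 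This realization is purely geometric and therefore works with coefficients in any $R$.

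Next, I would stratify this auxiliary variety using the Bruhat-type decomposition associated with the inclusion $\bbP_{\mathbf{J}_d} \subset \bbP_{n-1}$. Combinatorially, the letter $\mathbf{s}_{n-1}$ appearing in the middle of $\mathbf{v}_d = \mathbf{s}_1 \cdots \mathbf{s}_{n-1-[d/2]}\mathbf{s}_{n-1}\mathbf{s}_{n-2}\cdots \mathbf{s}_{[d+1/2]}$ governs which cells intersect the generic locus. I would identify the open stratum with $\X_{n-1,d-1} \times \mathbb{G}_m$ (the $\mathbb{G}_m$-factor arising from the generic position of a $1$-dimensional subspace and the $\X_{n-1,d-1}$ factor coming from the truncated braid element $\mathbf{v}_{d-1}$ on the smaller group $\Gl_{n-1}$), and the closed stratum with an $\mathbb{A}^1$-bundle over $\X_{n-1,d}$, where the $\mathbb{A}^1$-fiber is responsible for the shift $[-2]$ and the Tate twist $(1)$ in the statement. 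The excision distinguished triangle attached to this open/closed decomposition immediately yields the desired triangle in $D^b(R\Gl_{n-1}(q)\times\langle F\rangle\mmod)$.

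The main obstacle will be the precise identification of the two strata as $\X_{n-1,d-1}\times \mathbb{G}_m$ and an $\mathbb{A}^1$-bundle over $\X_{n-1,d}$. This requires tracking carefully how the pair $(\mathbf{J}_d, \mathbf{v}_d)$ transforms when one factors out the letter $\mathbf{s}_{n-1}$, and how the $d$-periodicity of $(\mathbf{J}_d,\mathbf{v}_d)$ (via \cite[Lemma~11.7, 11.8]{parabolicDM}) lines up with the $d-1$-periodicity of $(\mathbf{J}_{d-1},\mathbf{v}_{d-1})$ in the smaller group. Once this combinatorial matching is pinned down over $\overline{\mathbb{F}}_p$, the extension from $\mathbb{Q}_\ell$ to $R$ is automatic since excision, Tate twists and the K\"unneth formula for constant sheaves all hold over $\mathbb{Z}_\ell$ (and hence over $\mathbb{F}_\ell$ by change of coefficients).
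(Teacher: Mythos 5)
Your strategy shares the spirit of the paper's proof: both rerun Dudas's open/closed decomposition and then note that the resulting excision triangle in \'etale cohomology is insensitive to the coefficient ring. Two details need to be corrected, though. First, ${}^*\!\mathscr{R}^{\Gl_n}_{\Gl_{n-1}}\rgc(\X_{n,d},R)$ is not ``a pullback of $\X_{n,d}$ along $\bbG/\bbP_{\mathbf{J}_d}\to\bbG/\bbP_{n-1}$''; rather, since the finite unipotent radical $U$ is a $p$-group and $\ell\neq p$, taking $U$-invariants is exact and ${}^*\!\mathscr{R}^{\Gl_n}_{\Gl_{n-1}}\rgc(\X_{n,d},R)\simeq\rgc(U\backslash\X_{n,d},R)$. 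Second, and more substantially, the paper does not stratify $U\backslash\X_{n,d}$ directly. It decomposes the Galois covering $\tilde\X_{n,d}$ into a $U$-stable and $L$-stable open piece $\tilde\X_{z_0}$ and its closed complement $\tilde\X_{z_1}$, identifies $\rgc(U\backslash\tilde\X_{z_0},R)$ and $\rgc(U\backslash\tilde\X_{z_1},R)$ via \cite[Proposition 3.2]{quotient}, and only then mods out by $L$ on the right. That last step uses that $L=\Gl_1(q^d)\times\Gl_{n-d}(q)$ is an $\ell'$-group, so that $-\otimes^{L}_{RL}R$ behaves as ordinary coinvariants. Your proposal stays on the base $\X_{n,d}$, never invokes the cover, and never mentions the $\ell'$-condition; but it is precisely in the descent from the cover to the base --- not in the braid combinatorics of $(\mathbf{J}_d,\mathbf{v}_d)$, which is taken over verbatim from Dudas --- that the passage from $\mathbb{Q}_\ell$ to $\mathbb{Z}_\ell$- and $\mathbb{F}_\ell$-coefficients requires an argument.
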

\begin{proof}
	Let $\bbU$ be the unipotent radical of an $F$-stable parabolic subgroup of $\bbG$ with Levi complement $\Gl_{n-d}(\overline{\mathbb{F}}_p)\times \Gl_1(\overline{\mathbb{F}}_p)$. We also set $L = \Gl_{n-1}(q)\times\Gl_1(q^d)$. 
	We follow the proof of \cite[Theorem 2.1]{cohomology-of-DL_Dudas}. We can decompose the variety $\tilde{\X}_{n,d}$ as $\tilde{\X}_{n,d}=\tilde{\X}_{z_0} \sqcup \tilde{\X}_{z_1}$ where $\tilde{\X}_{z_0}$ is an open subvariety of $\tilde{\X}_{n,d}$ stable by the action of $U$ on the left and $L$ on the right. By  \cite[Proposition 3.2]{quotient} we have 
	$$ \rgc(U \backslash \tilde{\X}_{z_1},R) \simeq \rgc(\tilde{\X}_{n-1,d}/U',R)[-2](1) $$ 
	where $U'$ is some unipotent  subgroup of $L$. Since $L$ is an $\ell'$-group we deduce that 
	$$ \rgc(U \backslash \tilde{\X}_{z_1}/L,R) \simeq \rgc(\X_{n-1,d},R)[-2](1).$$ 
	For the second piece we use \cite[Proposition 3.2]{quotient}. There exists $N \subset L$ and $N' \subset L':=\Gl_{n-d}(q)\times\Gl_1(q^{d-1})$ such that  
	$L/N \simeq L'/N'$ and
	$$ \rgc(U \backslash \tilde{\X}_{z_0},R) \simeq \rgc(\bbG_m \times  \tilde{\X}_{n-1,d-1}/N',R).$$ 
	Moding out by $L$ we obtain 
	$$ \rgc(U \backslash \tilde{\X}_{z_0}/L,R) \simeq \rgc(\bbG_m \times  \X_{n-1,d-1},R).$$ 
	We conclude using open-closed theorem 
	 for the decomposition 
	$U\backslash\X_{n,d}=U\backslash\tilde{\X}_{z_0}/L \sqcup U\backslash\tilde{\X}_{z_1}/L$ and the fact that $\sr$ is given by taking the fixed points under $U$.
\end{proof}	
Since distinguished triangles induce long exact sequences by open-closed theorem
 we obtain the following corollary which we are going to use to compute inductively the cohomology.
\begin{cro} \label{dm2}
There is a long exact sequence
\begin{small}
		\[
		\xymatrixcolsep{0.27cm}
		\xymatrixrowsep{0.27cm}
		\xymatrix{
			\cdots \ar[r] & \h_c^{i-1}(\X_{n-1,d-1},R) \oplus \h_c^{i-2}(\X_{n-1,d-1},R)(1) \ar[r] & ^* \! \mathscr{R}^{\Gl_n}_{\Gl_{n-1}}(\h_c^{i}(\X_{n,d},R)) \ar[r] & \h_c^{i-2} (\X_{n-1,d},R)(1) \ar `r[d]  `[l]  `[dlll] `[ll] [lld] \\
			& \h_c^{i}(\X_{n-1,d-1},R) \oplus \h_c^{i-1}(\X_{n-1,d-1},R) (1) \ar[r] & ^* \! \mathscr{R}^{\Gl_{n}}_{\Gl_{n-1}}(\h_c^{i+1}(\X_{n,d},R)) \ar[r] & \h_c^{i-1}(\X_{n-1,d},R)(1) \ar[r] & \cdots}
		\]
	\end{small}
\end{cro}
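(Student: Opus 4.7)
The plan is to apply the cohomology functor to the distinguished triangle
$$\rgc(\X_{n-1,d-1} \times \mathbb{G}_m, R) \longrightarrow \sr \rgc(\X_{n,d}, R) \longrightarrow \rgc(\X_{n-1,d}, R)[-2](1) \rightsquigarrow$$
provided by Theorem~\ref{Cnd}, and then identify each of its three terms in cohomological degree $i$ with the corresponding entry in the stated long exact sequence.

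For the leftmost term I would invoke the Künneth formula in étale cohomology with compact support. The complex $\rgc(\mathbb{G}_m, R)$ has cohomology concentrated in degrees $1$ and $2$, equal to $R$ and $R(-1)$ respectively, where the Tate twist encodes the Frobenius eigenvalue $q$. Combining this with Künneth yields
$$\h_c^i(\X_{n-1,d-1} \times \mathbb{G}_m, R) \simeq \h_c^{i-1}(\X_{n-1,d-1}, R) \oplus \h_c^{i-2}(\X_{n-1,d-1}, R)(1),$$
which matches the first entry of the long exact sequence.

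For the middle term, I would appeal to the description of $\sr$ used at the end of the proof of Theorem~\ref{Cnd}: Harish-Chandra restriction is realised by taking $\bbU$-fixed points for the unipotent radical of an $F$-stable parabolic subgroup. Because $\bbU^F$ is a $p$-group and $\ell \neq p$, averaging by $|\bbU^F|$ makes this fixed-point functor exact on $R$-module categories, and hence it commutes with taking cohomology of complexes. Consequently $\h^i\bigl(\sr \rgc(\X_{n,d},R)\bigr) \simeq \sr\bigl(\h_c^i(\X_{n,d},R)\bigr)$. The third term is the easiest: the shift by $[-2]$ together with the Tate twist immediately gives $\h_c^{i-2}(\X_{n-1,d}, R)(1)$.

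Assembling these three identifications into the standard long exact sequence of a distinguished triangle produces the sequence claimed in the corollary. The argument is essentially formal and I expect no substantial obstacle; the only step deserving a line of justification is the exactness of $\sr$ on $R$-modules, which relies on the standing hypothesis $\ell \neq p$.
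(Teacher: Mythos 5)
Your proof is correct and takes essentially the same route as the paper: the paper simply states that the distinguished triangle of Theorem~\ref{Cnd} induces a long exact sequence in cohomology, and you have spelled out the three identifications (K\"unneth for $\mathbb{G}_m$, exactness of Harish--Chandra restriction since $\ell\neq p$, and the shift/twist for the third term) that the paper leaves implicit.
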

For simplicity, we will work with a shifted complex  instead of the cohomology complex of $\X_{n,d}$. We define
\begin{center}
	$C_{n,d}^{R} :=\R \Gamma_c(\X_{n,d},R)[-2n+d+1](-n+d+1)$.
\end{center}	
Then \ref{Cnd} translates into the following distinguished triangle in $D^b(R\Gl_{n-1}(q) \times \langle F\rangle\mmod)$
\[
\xymatrix{^* \! \mathscr{R}^{\Gl_n}_{\Gl_{n-1}} (C_{n,d}^R) \ar[r] & C_{n-1,d}^R \ar[r] & C_{n-1,d-1}[1] \oplus C_{n-1,d-1}^R(1) \ar@{~>}[r] & }.
\]
In addition,  we can rewrite theorem \ref{Xnd} for $C_{n,d}^{\mathbb{Q}_\ell}$. Namely if  $\mu=(n-d)$  is the trivial partition of  $n-d$ and  $x \in \{d-1,\cdots,2,1,0\} \cup \{2n-d-1\}$, we have 
\begin{equation}
\h^{x}(C_{n,d}^{\mathbb{Q}_\ell})=\begin{cases}
\nabla(\mu * x) \langle q^{x}\rangle  &  x<n-d \text{ and } x\leq d-1, \\
\nabla(\mu * (x+1)) \langle q^{x+1}\rangle  &  n-d\leq x < d-1, \\
\nabla(n) \langle q^n \rangle  & x=2n-d-1,\\
0 &\text{otherwise}.
\end{cases}
\end{equation}

The strategy  for computing the cohomology of $C_{n,d}^R$ is to break it into the various generalized eigenspaces of $F$ and to study each summand separately. In the case where $\ell \mid \Phi_m(q)$ with $m>d$ and $m>n-d+1$ and $m>6$ then we will see that each summand corresponds to different block. The most complicated summand is the one corresponding to the principal block which under our assumption on $\ell$ is the only unipotent block with non-trivial cyclic defect group. For this one we shall use the explicit knowledge of the tree.

\begin{thm} \label{expXnd}
	Let $R$ be any ring among $\mathbb{Z}_\ell$  and $\mathbb{F}_\ell$. Assume $\ell \mid \Phi_m(q)$ with $m>d$ and $m>n-d+1$. If $\h^{\bullet}(C_{n,d}^{\mathbb{Z}_\ell})$ 
	is torsion-free over $\mathbb{Z}_\ell$ then
	the cohomology of $C^R_{n,d}$ is given by the following formulas:
	\begin{equation}
	\h^{x}(C_{n,d}^{R})=\begin{cases}
	\nabla_R(\mu * x) \langle q^{x} \rangle, &  0\leq x<n-d \text{ and } x\leq d-1, \\
	\nabla_R(\mu * (x+1)) \langle q^{x+1}\rangle  &  n-d\leq x < d-1, \\
	\nabla_R(n) \langle q^n \rangle & x=2n-d-1, \\
	0 & \text{otherwise}.
	\end{cases}
	\end{equation}
\end{thm}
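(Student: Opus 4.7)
The plan is to identify, for each degree $x$, the $\mathbb{Z}_\ell\Gl_n(q)$-lattice $\h^x(C_{n,d}^{\mathbb{Z}_\ell})$ with $\nabla_{\mathbb{Z}_\ell}(\mu * x)$ (or $\nabla_{\mathbb{Z}_\ell}(n)$ in the top degree) by checking the two defining properties of Proposition \ref{nabla}. The torsion-freeness hypothesis and the universal coefficient theorem yield natural isomorphisms
\[
R \otimes_{\mathbb{Z}_\ell} \h^x(C_{n,d}^{\mathbb{Z}_\ell}) \simeq \h^x(C_{n,d}^R)
\]
for $R \in \{\mathbb{Q}_\ell,\mathbb{F}_\ell\}$, so it suffices to treat $R=\mathbb{Z}_\ell$. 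Applied with $R=\mathbb{Q}_\ell$ this isomorphism together with the explicit description of $\h^\bullet(C_{n,d}^{\mathbb{Q}_\ell})$ (the reformulation of Theorem \ref{Xnd} given just before the theorem) immediately verifies condition (i) of Proposition \ref{nabla}: each $\h^x(C_{n,d}^{\mathbb{Z}_\ell})$ is a lattice of the predicted character, with the predicted Frobenius eigenvalue.

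What remains is the socle condition (ii). The hypotheses $m>d$ and $m>n-d+1$ force $m>n/2$, so $\lfloor n/m\rfloor\leq 1$; hence at most the principal $\Phi_m$-block of $\Gl_n(q)$ has non-trivial defect, and that defect is cyclic. Projecting $\h^x(C_{n,d}^{\mathbb{Z}_\ell})$ onto its block components, any summand sitting in a defect-zero block is the unique $\mathbb{Z}_\ell$-lattice (up to isomorphism) with the prescribed character, so by Proposition \ref{nabla} it must coincide with $\nabla_{\mathbb{Z}_\ell}(\mu*x)$ and condition (ii) is automatic there.

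The remaining work is concentrated in the principal block. Under the hypotheses, only the trivial character $\nabla(n)$ (appearing in the top degree $x=2n-d-1$), together possibly with a few low-degree $\nabla(\mu*x)$ whose $m$-core coincides with that of $(n)$, can lie there. To identify these summands I would combine the explicit Brauer tree of the principal block (from which one reads off, for each ordinary character on a vertex, the simple socle of the unique uniserial lattice with that character) with an inductive argument on the pair $(n,d)$, driven by the long exact sequence of Corollary \ref{dm2}. That sequence expresses ${}^*\!\mathscr{R}^{\Gl_n}_{\Gl_{n-1}}\h^\bullet(C_{n,d}^R)$ in terms of $\h^\bullet(C_{n-1,d}^R)$ and $\h^\bullet(C_{n-1,d-1}^R)$, which are known by induction, and together with the tree it pins down the socle of the mod-$\ell$ reduction.

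The hard part will be this last step: matching the $\mathbb{F}_\ell$-socle of the principal-block summand of $\h^x(C_{n,d}^{\mathbb{F}_\ell})$ to the prescribed simple $S_{\mathbb{F}_\ell}(\mu*x)$. This requires both an explicit description of the relevant Brauer tree and careful inductive bookkeeping to verify that Harish-Chandra restriction is compatible with the lattices $\nabla_{\mathbb{Z}_\ell}(\lambda)$ at each stage of the recursion, which is the technical heart of the argument.
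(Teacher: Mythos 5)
Your high-level framework is right: use torsion-freeness and the universal coefficient theorem to reduce to $R=\mathbb{Z}_\ell$, then verify the two conditions of Proposition~\ref{nabla} one block at a time, with the defect-zero blocks being automatic (via Lemma~\ref{nomhook}: the partitions $\mu*x$ for $x\neq n-m$ are $m$-cores, so each such lattice is rigid). The obstruction is entirely concentrated in the principal block, which you correctly identify.

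However, your strategy for the principal block is different from the paper's, and as stated it has a gap. You propose an induction on $(n,d)$ driven by Corollary~\ref{dm2}; but that long exact sequence only controls the Harish-Chandra restriction ${}^*\!\mathscr{R}^{\Gl_n}_{\Gl_{n-1}}\h^x(C_{n,d})$, not $\h^x(C_{n,d})$ itself, and Harish-Chandra restriction is far from injective on modules. Pinning down the socle of $\h^{n-m}(C_{n,d}^{\mathbb{F}_\ell})$ from its restriction would require further work you have not supplied; moreover your phrase ``the unique uniserial lattice with that character'' is not quite right — a non-exceptional vertex in a Brauer tree generally carries \emph{two} uniserial lattices (one for each adjacent edge), and Proposition~\ref{nabla} singles out the one with the prescribed socle. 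The whole difficulty is deciding which of the two occurs. (The induction via Corollary~\ref{dm2} is what the paper uses for Theorem~\ref{torsion-free}, i.e.\ for establishing the torsion-free \emph{hypothesis}; Theorem~\ref{expXnd} takes that as given and is meant to be a one-shot argument.)

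The paper's argument for the principal-block piece $D$ is direct and avoids this issue: since $D$ is perfect and has exactly two non-zero cohomology degrees, Lemma~\ref{omegarick} gives $M\otimes\mathbb{F}_\ell \simeq \Omega^{n-d+m}S_{\mathbb{F}_\ell}(n)$ in the stable category, hence (by indecomposability of both sides) in the module category. One then walks around the Brauer tree to compute $\Omega^{i}\mathbb{F}_\ell$ for $0\leq i<2m$ and reads off its lift character; the numerical constraint $m<n-d+m<2m$ (from $d<m\leq n$) combined with the known character $\rho_{\mu*(n-m)}$ of $M$ singles out $\Omega^{n-d+m}\mathbb{F}_\ell\simeq\begin{pmatrix}T\\S(\mu*(n-m))\end{pmatrix}$, which has exactly the required socle. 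This use of the perfect complex to derive the Heller-translate identity, and the resulting degree bound on the Heller power, is the key idea missing from your proposal.
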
	
We start with the following lemma:
\begin{lemma} \label{nomhook}
	Let $m$ be such that $m>d$.
	\begin{enumerate}
		\item If $x < n-d$ and $x \leq d-1$ then the partition $(n-d,x+1,1^{d-x-1})$ is an $m$-core unless $x=n-m$.
		\item If $n-d \leq x <d-1$ then the partition $(x,n-d+1,1^{d-x-1})$ is an $m$-core.
	\end{enumerate}	
\end{lemma}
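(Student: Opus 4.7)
The plan is to compute the $\beta$-set of each partition explicitly and then apply the criterion that a partition is an $m$-core precisely when its $\beta$-set has no element $y \ge m$ such that $y - m$ is a hole (that is, a non-negative integer not in the $\beta$-set). In both cases it is convenient to take $r = d - x + 1$ parts, so that the computation $\lambda_i + r - i$ for $i = 1, \ldots, r$ writes down the $\beta$-set directly.

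For case (i), the direct calculation shows that the $\beta$-set of $(n-d,\, x+1,\, 1^{d-x-1})$ is
\[
\{\,1, 2, \ldots, d - x - 1\,\} \;\cup\; \{\,d,\ n - x\,\}.
\]
Since $m > d$, the only element that can possibly be $\geq m$ is $y = n - x$. If $n - x < m$ there is nothing to check. If $n - x = m$, that is, $x = n - m$, then $y - m = 0$ is not in the $\beta$-set, producing the excluded $m$-hook. In the remaining case $n - x > m$ one has $n - x - m \geq 1$, and the standing hypothesis $m > n - d + 1$ of this section (so $m \geq n - d + 2$) yields $n - x - m \leq d - x - 2$. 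Thus $n - x - m \in \{1, 2, \ldots, d - x - 1\}$ lies in the $\beta$-set, and no $m$-hook is produced.

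For case (ii), the same kind of computation gives the $\beta$-set of $(x,\, n - d + 1,\, 1^{d - x - 1})$ as
\[
\{\,1, 2, \ldots, d - x - 1\,\} \;\cup\; \{\,n - x,\ d\,\}.
\]
The assumption $n - d \leq x$ forces $n - x \leq d$, so every element of the $\beta$-set is at most $d$. Because $m > d$, no element can be $\geq m$, and the partition is automatically an $m$-core, without any excluded case.

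The argument is essentially bookkeeping rather than conceptually difficult; the only thing to monitor is the behaviour at the boundary values $x = d - 1$ and $x = d - 2$, where the set $\{1, \ldots, d - x - 1\}$ is empty or a singleton. In those extreme situations the sub-case analysis in (i) either collapses (no element $\geq m$) or matches exactly the exception $x = n - m$, so nothing new is required.
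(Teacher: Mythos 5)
Your proof is correct, and it takes a somewhat different route than the paper. The paper's proof argues directly with hook lengths: it identifies two hooks of lengths $n-x$ and $d$, observes that $d < m$, and concludes that the only hook that could equal $m$ is the $(1,1)$-hook $n-x$, which equals $m$ precisely when $x=n-m$. Your proof instead writes down the full $\beta$-set $\{1,\dots,d-x-1\}\cup\{d,\,n-x\}$ and checks the $m$-core criterion element by element. The two approaches are close in spirit, but yours is more careful on one point: when $n-x > m$, the paper's argument implicitly requires that no hook other than the $(1,1)$-hook can reach $m$, and its claim that ``the two largest hooks are $n-x$ and $d$'' is not literally true in general (the hook at position $(1,2)$ has length $n-d$ or $n-d-1$, which can exceed $d$ when $n>2d$). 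The conclusion still holds because $n-d < m$ under the standing hypothesis, but the paper does not spell this out, whereas your case $n-x>m$ handles it explicitly by showing $n-x-m\in\{1,\dots,d-x-1\}$ lies in the $\beta$-set. You are also right to flag that this step genuinely uses $m>n-d+1$, which is a standing hypothesis in the section (it appears in Theorem~\ref{expXnd}, where the lemma is invoked) even though the lemma statement only records $m>d$. In short: correct, a little more rigorous than the source, and a useful clarification of exactly which inequality on $m$ is doing the work.
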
	
\begin{proof}
	In case (ii) the largest hook has length $x+1+(d-x-1)=d$. In case (i), the two largest hooks
	have length equal to $(n-d)+1+(d-x-1)=n-x$ and $x+1+d-x-1=d$ respectively. Note that $n-x=m$ can occur only if $m> n-d$.
\end{proof}	

\begin{proof}[\textsc{proof of the theorem}]
	If $m>n$ then $\ell \nmid |\Gl_n(q)|$, therefore the theorem follows easily from \ref{Xnd}. Therefore we shall now assume that $m \leq n$.
	
	\smallskip
	
	Let $\mu=(n-d)$ to be the trivial partition of $n-d$.  We cut the complex $C_{n,d}$ at different eigenvalues of $F$. Let $0 \leq x \leq n$. Assume $q^x \not\equiv q^n (\mathrm{mod}\hspace{0.1cm} \ell)$. By assumption on $m$ we have $2m=m+m>d+(n-d+1)> n$. Therefore, the condition on $x$ is equivalent to $x \neq n $ and $x \neq n-m$.
	In that case the cohomology of $C_{n,d}$  cut by the eigenvalue  $q^x$  can be non-zero in 5 at most one degree and the corresponding representation has character $\rho_{\mu *x}$. By \ref{nomhook}, $\mu*x$ is an $m$-core therefore  $\nabla_{R}(\mu *x)$ is the unique $R\Gl_n(q)$-module which lifts  to a $\mathbb{Q}_\ell \Gl_n(q)$-module with character $\rho_{\mu *x}$. 
\\
	Cutting the complex $C_{n,d}$ at the eigenvalue $q^n$ gives us two non-zero cohomology groups, one corresponding to the trivial representation with  eigenvalue $q^n$ and another one corresponding to the eigenvalue $q^{n-m}$ since $m \leq n$. Since $n-m<n-d$ this last cohomology group occurs in degree $n-m$ and it has character $\rho_{\mu *(n-m)}$. Let $M := \h^{n-m}(C_{n,d}^{\mathbb{Z}_\ell}) $. We note that the module $M \otimes_{\mathbb{Z}\ell} \mathbb{Q}_{\ell}$ is indecomposable and by assumption $M$ is torsion-free module over $\mathbb{Z}_{\ell}$, therefore $M$ is also indecomposable. Thus, it is indecomposable module with character $\rho_{\mu *(n-m)}$.
	Since the cohomology of $C_{n,d}^{\mathbb{Z}_\ell}$
	is torsion-free then $M \otimes_{\mathbb{Z}_\ell} \mathbb{F}_\ell \simeq \h(C_{n,d}^{\mathbb{F}_\ell})$. 
	Let $D $ be the generalized $q^n$-eigenspace of $F$ on $C_{n,d}^{\mathbb{F}_\ell}$. Then we have a distinguished triangle in $D^{b}(\mathbb{F}_\ell \Gl_n(q)\mmod)$ given by
	\[
	\xymatrix{
		M \otimes_{\mathbb{Z}_\ell} \mathbb{F}_\ell  [m-n] \ar[r] & D \ar[r] & S_{\mathbb{F}_\ell}(n)[d-2n+1] \ar@{~>}[r] &}
	\]
	in the stable category of $\mathbb{F}_\ell \Gl_n(q)$. 
	Since $M$ is indecomposable, we actually have $M \otimes_{\mathbb{Z}_\ell} \mathbb{F}_\ell \simeq \Omega^{n-d+m}  S_{\mathbb{F}_\ell}(n)$ as $\mathbb{F}_\ell \Gl_n(q)$-module. 
	 To conclude we only need to show that the socle of $M \otimes_{\mathbb{Z}_\ell} \mathbb{F}_\ell $ is isomorphic to  $S_{\mathbb{F}_\ell}(\mu *(n-m))$. For that purpose we use the Brauer tree (look at \cite[Section 5]{OLDL} for understanding how we can use the Brauer tree) of the $\Phi_m$-block, given by
	\begin{center}
		\begin{pspicture}(11,0.5)
		\psset{linewidth=1pt}
		\cnode[fillstyle=solid,fillcolor=black](0,0){5pt}{A}
		\cnode(1.5,0){5pt}{B}
		\cnode(3,0){5pt}{C}
		\cnode(6.5,0){5pt}{D}
		\cnode(8,0){5pt}{E}
		\cnode(10,0){5pt}{F}
		\cnode(11.5,0){5pt}{G}
		\ncline[nodesep=0pt]{A}{B}\ncput*[npos=0.5]{$S_m$}
		\ncline[nodesep=0pt,linestyle=dashed]{B}{C}\naput[npos=0]{$\vphantom{\big)}$}
		\ncline[nodesep=0pt]{C}{D}\ncput*[npos=0.5]{$S(\mu*(n-m))$}
		\ncline[nodesep=0pt]{D}{E}\ncput*[npos=0.5]{$T$}
		\ncline[nodesep=0pt,linestyle=dashed]{E}{F}\naput[npos=0]{$\vphantom{\big)}$}
		\ncline[nodesep=0pt]{F}{G}\ncput*[npos=0.5]{$S_1$}
		\end{pspicture}
	\end{center}
	\vspace{0.5cm}
	We label the vertices by $\chi_i$ for $i=1,\ldots,m$ and edges by $S_i$. 
	We can compute 
	 $\Omega^i k$ using the Brauer tree and then
	 we have $\Omega^{n-d+m}\mathbb{F}_\ell = \Omega^{n-d+m}S_1 $. 
	We first note that for $0\leq i < m$, $\Omega^i \mathbb{F}_\ell$  is isomorphic to
	$$ \Omega^{i}\mathbb{F}_\ell \simeq \begin{pmatrix}
	S_i \\
	S_{i+1}
	\end{pmatrix}$$
	which lifts to a $\mathbb{Z}_\ell G$-lattice with character $\chi_{i+1}$. Therefore
	$$ (\Omega^{i}\mathbb{F}_\ell )^{*} \simeq  \Omega^{-i}\mathbb{F}_\ell \simeq \Omega^{2m-i}\mathbb{F}_\ell  \simeq \begin{pmatrix}
	S_{i+1} \\
	S_{i}
	\end{pmatrix} $$
	which lifts to a $\mathbb{Z}_\ell G$-lattice with character $\chi_{i+1}$. Now, let $j \in \{1,\ldots,m\}$ be such that $\chi_j = \rho_{\mu *(n-m)}$, in which case $S_j \simeq S(\mu *(n-m))$ and $S_{j-1} = T$. We have
	$$ \Omega^{2m-j+1}\mathbb{F}_\ell \simeq \begin{pmatrix}
	T \\S(\mu *(n-m))
	\end{pmatrix}.$$
	Here the condition on $j$ becomes $m < 2m-j+1 \leq 2m$. On the other hand, by assumption on $m$ we have  $m < n-d+m < 2m$ (recall that $d< m \leq n$) and $\Omega^{n-d+m}\mathbb{F}_\ell$ lifts to a $\mathbb{Z}_\ell G$-lattice of character $\rho_{\mu *(n-m)}$. 
	Therefore we must have 
	$$\Omega^{n-d+m}\mathbb{F}_\ell \simeq \Omega^{2m-j+1}\mathbb{F}_\ell \simeq \begin{pmatrix}
	T \\S(\mu *(n-m))
	\end{pmatrix}$$
	which proves that $S(\mu *(n-m))$ is the socle of $M \otimes_{\mathbb{Z}_\ell} \mathbb{F}_\ell $ and shows that $M \simeq \nabla_{\mathbb{Z}_\ell}(\mu *(n-m))$.
	\smallskip
	
	Therefore this gives us the cohomology groups at each degree as follows
	\begin{equation}
	\h^{x}(C_{n,d}^{R})=\begin{cases}
	\nabla_{R}(\mu * x) \langle q^{x}\rangle &  x<n-d \text{ and } x\leq d-1, \\
	\nabla_{R}(\mu * (x+1)) \langle q^{x+1}\rangle &  n-d\leq x < d-1, \\
	\nabla_{R}(n) \langle q^n\rangle  & x=2n-d-1, \\
	0, & \text{otherwise}.
	\end{cases}
	\end{equation}
\end{proof}		 

\begin{eg}
	Let us consider the variety $\X_{4,3}$. When $\ell \mid \Phi_4(q)$ the cohomology groups of this variety cut by the eigenvalue $1$ has two non-zero terms in degree $4$ and $8$ respectively. We have the following distinguished triangle in $D^{b}(\mathbb{F}_\ell \Gl_4(q)\mmod)$
	\[
	\xymatrixcolsep{0.7cm}
	\xymatrixrowsep{0.7cm}
	\xymatrix{
		\h^{0} (C_{4,3}^{\mathbb{F}_\ell})[0]  \ar[r] & C \ar[r] &  S_{\mathbb{F}_\ell}(4)[-4] \ar@{~>}[r] & }.
	\]
	We have $\h^{0} (C_{4,3}^{\mathbb{F}_\ell})=\Omega^{5} S_{\mathbb{F}_\ell}(4)$. By walking around the Brauer tree of the principal $\ell$-block of $\Gl_4(q)$, we have have
	\begin{center}
		$\nabla_{\mathbb{F}_\ell}(1^4)=\Omega^5 S_{\mathbb{F}_\ell}(4) =  \begin{pmatrix}
		S_{\mathbb{F}_\ell}(21^2)\\
		S_{\mathbb{F}_\ell}(1^4)\\
		\end{pmatrix}
		\hookrightarrow
		\begin{pmatrix} 
		& S_{\mathbb{F}_\ell}(1^4) &        \\
		S_{\mathbb{F}_\ell}(1^4) &          &   S_{\mathbb{F}_\ell}(21^2)  \\
		. & & \\
		. & & \\
		S_{\mathbb{F}_\ell}(1^4)  & & \\
		&   S_{\mathbb{F}_\ell}(1^4)  &       \\
		\end{pmatrix}.$
	\end{center}	
	In addition, the $\mathbb{Z}_\ell G$-lattice $\nabla_{\mathbb{Z}_\ell}(1^4) \simeq \h^{4}(C_{4,3}^{\mathbb{Z}_\ell})$ has character $\rho_{(1^4)}$.
\end{eg}	
\begin{thm}\label{torsion-free} \label{Xnd2}
	Assume that $\ell \mid \Phi_m(q)$ with $m>d$ and $m>n-d+1$ and $m>6$. Then $\h^{i}_c(\X_{n,d},\mathbb{Z}_\ell)$ is torsion-free. 
\end{thm}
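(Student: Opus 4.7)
The plan is to argue by induction on $n$, combined with a block-by-block analysis. For the base case I would take $n < m$: the standing hypothesis $\ell \mid \Phi_m(q)$ then implies $\ell \nmid |\Gl_n(q)|$, since $\ell$ divides $q^i - 1$ only when $m \mid i$, which forces $i \geq m > n$. In this situation each $\mathbb{Z}_\ell \Gl_n(q)$-block is a maximal order and, combined with Proposition~\ref{boun1}, standard arguments give that $\h^\bullet_c(\X_{n,d},\mathbb{Z}_\ell)$ is torsion-free.

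For the inductive step, one checks immediately from $m > d$ and $m > n-d+1$ that both $(n-1,d)$ and $(n-1,d-1)$ still satisfy the hypotheses of the theorem, so by induction the cohomologies of $C_{n-1,d}^{\mathbb{Z}_\ell}$ and $C_{n-1,d-1}^{\mathbb{Z}_\ell}$ are torsion-free. Feeding this into the long exact sequence of Corollary~\ref{dm2} then shows that $\sr\h^i_c(\X_{n,d},\mathbb{Z}_\ell)$ is torsion-free for every $i$, since it sits in an exact sequence flanked by torsion-free $\mathbb{Z}_\ell$-modules.

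To descend from torsion-freeness of the Harish--Chandra restriction to torsion-freeness of the cohomology itself, I would split $C_{n,d}^{\mathbb{Z}_\ell}$ by its generalized $F$-eigenspaces, equivalently by the $\Phi_m$-block decomposition. By Lemma~\ref{nomhook}, under our hypotheses every partition $\mu*x$ contributing to $\h^\bullet(\X_{n,d},\mathbb{Q}_\ell)$ is an $m$-core \emph{except} for $x = n$ and $x = n-m$; these two exceptional partitions are $(n)$ (occurring in degree $2n-d-1$) and $\lambda = (n-d, n-m+1, 1^{m-n+d-1})$ (occurring in degree $n-m$), both lying in the principal $\Phi_m$-block. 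Summands in defect-zero blocks are automatically projective $\mathbb{Z}_\ell$-lattices, hence torsion-free, so all the work concentrates on the principal block summand $D$.

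The principal $\Phi_m$-block has cyclic defect group and its Brauer tree is a straight line with exceptional vertex at one end; the assumption $m > 6$ ensures that this tree is long enough for the Heller-shift manipulations to apply uniformly. The top cohomology $\h^{2n-d-1}(D)$ carries the trivial character and is a rank-one $\mathbb{Z}_\ell$-lattice, trivially torsion-free. For the contribution in degree $n-m$, I would combine the distinguished triangle of Theorem~\ref{Cnd}, Lemma~\ref{stabomega} (to translate vanishing into a Heller relation), and Corollary~\ref{middle_degreee} (cuspidal composition factors appear only in middle degree) with the explicit Brauer tree to identify the $\mathbb{F}_\ell$-reduction of the putative lattice as a specific Heller translate of the trivial module, and to rule out any non-zero cohomology in other degrees. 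The main obstacle is precisely this last step: reconciling the Heller-shift structure dictated by the Brauer tree with the torsion-free restriction furnished by the inductive step, so as to exclude any residual torsion in the principal block summand.
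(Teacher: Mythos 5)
Your overall skeleton (induction on $n$, use of the long exact sequence from Corollary~\ref{dm2}, reduction via Harish--Chandra restriction, block-by-block analysis) is the right one, but there are two genuine gaps.

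First, the central claim in your second paragraph — that $\sr\h^i_c(\X_{n,d},\mathbb{Z}_\ell)$ is torsion-free ``since it sits in an exact sequence flanked by torsion-free $\mathbb{Z}_\ell$-modules'' — is false in general. The long exact sequence realizes $\sr\h^x(C_{n,d})$ as an extension of a kernel (which is a submodule of a torsion-free module, hence fine) by a \emph{cokernel} of a map between torsion-free modules, and such a cokernel can certainly have torsion (think of $\mathbb{Z}_\ell \xrightarrow{\ell} \mathbb{Z}_\ell$). The paper splits the degrees into three regimes: in most degrees (Step~I) the connecting map is forced to vanish by an eigenvalue-of-Frobenius comparison, which kills the cokernel problem and genuinely gives a short exact sequence of lattices; but in the exceptional degrees $x=n-d-1$ and $x=2n-d-3$ (Steps~II and~III) the eigenvalues match, the boundary map can be nonzero, and a substantial argument is needed to show it is in fact an isomorphism modulo $\ell$ so that both adjacent cohomology groups vanish. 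Step~II is where Lemma~\ref{simproj} enters (and where the hypothesis $m>6$ is actually used, to avoid the degenerate case $(n,d)=(6,4)$ — not, as you suggest, for generic ``tree-length'' reasons), and Step~III requires the auxiliary lemma that $\Ext^2_{kG}(k,k)=0$ for $\ell\mid\Phi_m(q)$. Your proposal simply does not engage with this obstruction, and the block/Brauer-tree discussion in your final paragraph is really a reprise of the proof of Theorem~\ref{expXnd} (which \emph{assumes} torsion-freeness), not a mechanism to establish it.

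Second, the base case is also a bit too quick. Your assertion that when $\ell\nmid|\Gl_n(q)|$ the statement follows from Proposition~\ref{boun1} by ``standard arguments'' is not justified: Proposition~\ref{boun1} only controls the bottom cohomology degree, and $\mathbb{Z}_\ell G$ being a maximal order says nothing about the absence of $\ell$-torsion coming from the geometry of the variety. The paper actually needs a real input here — the parity vanishing of $\h^\bullet_c(\X(\mathbf{w_0}^2),\mathbb{F}_\ell)$ coming from \cite{BDR} and \cite{CDR}, which together with the universal coefficient theorem yields torsion-freeness, plus the fact that $\h^\bullet_c(\X_{n,1},\mathbb{Z}_\ell)$ is a direct summand thereof. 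You should either cite a concrete result or reproduce this mechanism.
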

The following sections are devoted to proving the theorem by induction on $n$. More precisely,  we want to show that the theorem holds for $\X_{n,d}$ whenever it holds for $\X_{n-1,d}$ and $\X_{n-1,d-1}$. Note that the pair $(d,n-d)$ can only decrease therefore the assumption on $m$ carries over the induction.
\subsubsection{Base of induction}
Since $d \geq 1$ then for the base of induction, we should prove that the theorem holds for $\X_{1,1}$ and $\X_{n,1}$. 
\smallskip

The variety $\X_{1,1}$ is a point therefore the theorem holds trivially. For the second limit case we show the following proposition.
\begin{prop}
	Assume that $\ell \nmid |\Gl_n(q)|$. Then the cohomology of $\X_{n,1}$ over $\mathbb{Z}_{\ell}$ is torsion-free.
\end{prop}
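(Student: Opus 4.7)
The plan is to leverage the strong hypothesis $\ell \nmid |\Gl_n(q)|$: the decomposition matrix of every unipotent block is the identity, each block of $\mathbb{Z}_\ell \Gl_n(q)$ has defect zero and is Morita equivalent to $\mathbb{Z}_\ell$ (since $K$ is large), and each lattice $\nabla_{\mathbb{Z}_\ell}(\lambda)$ is itself indecomposable projective.

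First, I would apply Proposition \ref{boun1} to the pair $(J_1, \mathbf{v}_1)$: the hypothesis forces $\ell \nmid |\bbL_{J_1}^{\dot{\mathbf{v}}_1 F}|$, so $C := \rgc(\X_{n,1}, \mathbb{Z}_\ell)$ is a perfect complex of $\mathbb{Z}_\ell \Gl_n(q)$-modules, with $\h^i_c(\X_{n,1}, \mathbb{Z}_\ell) = 0$ for $i < 2n-2 = \dim \X_{n,1}$ and $\h^{2n-2}_c(\X_{n,1}, \mathbb{Z}_\ell)$ torsion-free; the dimension bound also gives $\h^i_c = 0$ for $i > 4n-4$. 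Specialising Theorem \ref{Xnd} to $d = 1$ shows that $C \otimes_{\mathbb{Z}_\ell} \mathbb{Q}_\ell$ has non-zero cohomology in exactly the two degrees $2n-2$ (realising $\nabla(n-1, 1) \langle q \rangle$) and $4n-4$ (realising $\nabla(n) \langle q^n \rangle$), and these two characters lie in two distinct $\ell$-blocks $B_1$ and $B_2$.

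Next, I would decompose $C \simeq \bigoplus_B e_B \cdot C$ along the block idempotents. Proposition \ref{boun1}(iii) pins down $\h^{2n-2}(e_{B_1} \cdot C) \simeq \nabla_{\mathbb{Z}_\ell}(n-1, 1)$, while smoothness of $\X_{n,1}$ makes $\h^{4n-4}_c(\X_{n,1}, \mathbb{Z}_\ell)$ a free $\mathbb{Z}_\ell$-module (spanned by fundamental classes of connected components), yielding $\h^{4n-4}(e_{B_2} \cdot C) \simeq \nabla_{\mathbb{Z}_\ell}(n) \simeq \mathbb{Z}_\ell$. The main obstacle is then ruling out $\ell$-torsion in all other degrees: the intermediate degrees of the two surviving summands $e_{B_1} \cdot C$ and $e_{B_2} \cdot C$ (where the rational cohomology vanishes) and all degrees of the other block summands (which are perfect complexes of $\mathbb{Z}_\ell$-modules with trivial rational cohomology, but which could a priori carry torsion). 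I plan to handle this either by invoking Verdier duality on the smooth variety $\X_{n,1}$, which pairs $\h^i_c$ with $\h^{2 \dim - i}$ up to $\Ext^1$ corrections and propagates the torsion-freeness from middle and top degrees to the rest, or by directly computing $\h^\bullet_c(\X_{n,1}, \mathbb{F}_\ell)$ and matching dimensions with the $\mathbb{Q}_\ell$-side through the universal coefficient sequence. Either route will establish $C \simeq \nabla_{\mathbb{Z}_\ell}(n-1,1)[-(2n-2)] \oplus \nabla_{\mathbb{Z}_\ell}(n)[-(4n-4)]$, whose integral cohomology is manifestly torsion-free.
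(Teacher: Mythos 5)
Your opening (block decomposition, semisimplicity of $\mathbb{F}_\ell G$, identifying the two rational characters and their blocks) is sound, but the proposal stops exactly at the step you flag as the ``main obstacle'': ruling out torsion in the intermediate degrees of the two surviving block summands and in every other block summand. Neither suggested route closes this gap. Verdier duality on the smooth non-proper $\X_{n,1}$ dualises $\R\Gamma_c(\X_{n,1},\mathbb{Z}_\ell)$ against $\R\Gamma(\X_{n,1},\mathbb{Z}_\ell(d))[2d]$ with \emph{ordinary} (not compactly supported) cohomology on the other side; the resulting universal-coefficient sequence merely trades torsion in $\h^i_c$ for torsion in $\h^{2d-i+1}(\X_{n,1},\mathbb{Z}_\ell)$, about which you have no information, so no propagation occurs. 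And ``computing $\h^\bullet_c(\X_{n,1},\mathbb{F}_\ell)$ and matching dimensions'' is, via the universal coefficient theorem, logically \emph{equivalent} to the torsion-freeness you want, not a route towards it.

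The paper supplies precisely this missing input by a genuinely different and substantive mechanism: it invokes the equivariant-sheaf functor $\underline{\mathrm{Ind}}_F$ of Bonnaf\'e--Dudas--Rouquier and a preservation-of-families argument (available because $\ell\nmid|G|$) to show that $\nabla_{\mathbb{F}_\ell}(\lambda)$ can occur in $\h^\bullet_c(\X(\mathbf{w}_0^2),\mathbb{F}_\ell)$ only in the single even degree $n_\lambda = n(n-1)+2\sum_i\binom{\lambda_i}{2}$, so the odd-degree $\mathbb{F}_\ell$-cohomology of $\X(\mathbf{w}_0^2)$ vanishes; by the universal coefficient theorem $\h^\bullet_c(\X(\mathbf{w}_0^2),\mathbb{Z}_\ell)$ is torsion-free, and $\h^\bullet_c(\X_{n,1},\mathbb{Z}_\ell)$ is recovered as a direct summand of it. Your proposal has no analogue of this parity-vanishing input. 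A secondary concern: Proposition \ref{boun1} is stated for a pair $(I,wF)$ with $w\in W$, whereas $\mathbf{v}_1=\mathbf{s}_1\cdots\mathbf{s}_{n-1}\mathbf{s}_{n-1}\cdots\mathbf{s}_1$ is a non-reduced braid word, not the lift of any element of $\mathfrak{S}_n$; so applying parts (ii) and (iii) of that proposition directly to $\X_{n,1}$ needs a justification you have not given.
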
	
\begin{proof}
	Under the assumption on $\ell$ the category $\mathbb{Z}_\ell G\mmod$ is semisimple. The simple unipotent representations are exactly the representations $\nabla_{\mathbb{F}_\ell}(\lambda)$ where $\lambda $ runs over the partition of $n$. We use the notation of \cite{BDR}. Let $\mathcal{B}$ be the flag variety of $G$. There is a functor $\underline{\mathrm{Ind}}_F$ from the bounded $G$-equivariant derived category $\mathscr{D}$ of constructible $\mathbb{F}_\ell$-sheaves on $\mathcal{B} \times \mathcal{B}$ to $D^b(\mathbb{F}_\ell G\mmod)$. Under the assumption on $\ell$ it follows from \cite[Remark 3.9]{CDR} that $\underline{\mathrm{Ind}}_F$ and its right adjoint preserve the filtration by families. Therefore one can argue as in the proof of \cite[Theorem 4.2]{BDR} to show that given a partition $\lambda$ of $n$, 
	$$ \mathrm{Hom}_{\mathbb{F}_\ell G\mmod}\big(\h_c^i(\X(\mathbf{w_0}^2),\mathbb{F}_\ell), \nabla_{\mathbb{F}_\ell}(\lambda) \big) \simeq \mathrm{Hom}_{\mathbb{F}_\ell G\mmod}\big(\h_c^{i-n_\lambda}(\X(1),\mathbb{F}_\ell), \nabla_{\mathbb{F}_\ell}(\lambda) \big)$$
	where $n_\lambda = n(n-1) + 2\sum_i {\lambda_i\choose 2}$. Consequently the cohomology of $\X(\mathbf{w_0}^2)$ over $\mathbb{F}_\ell$ vanishes in odd degrees. Therefore by the universal coefficient theorem $\h_c^\bullet(\X(\mathbf{w_0}^2),\mathbb{Z}_\ell)$ is torsion-free. We conclude by remarking that  under our assumption on $\ell$ we have that $\h_c^\bullet(\X_{n,1},\mathbb{Z}_\ell)$ is a direct summand of $\h_c^\bullet(\X(\mathbf{w_0}^2),\mathbb{Z}_\ell)$ (see the proof of \cite[Corollary 3.2]{cohomology-of-DL_Dudas}).
\end{proof}	
\subsubsection{Inductive step}
Again we will work under assumption that $\ell \mid \Phi_m(q)$ with $m>d$ and $m>n-d+1$ and $m>6$. We will write $C_{n,d}$ instead of $C_{n,d}^{\mathbb{Z}_\ell}$.
By induction, we assume that $\h^{\bullet}(C_{n-1,d})$ and $\h^{\bullet}(C_{n-1,d-1})$ are torsion-free. Then we will prove that $\h^{\bullet}(C_{n,d})$ is torsion-free as well. If $m>n$ then the cohomology of $C_{n,d}$ admits the same description as in characteristic zero. Thus we can assume $m\leq n$. We will state several lemmas and theorems.
\begin{lemma} \label{TF1}
	If $^* \! \mathscr{R}^{\Gl_{n}}_{\Gl_{n-1}}(\h^{\bullet}(C_{n,d}))$  is torsion-free over $\mathbb{Z}_\ell$ so is $\h^{\bullet}(C_{n,d})$.
\end{lemma}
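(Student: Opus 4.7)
The plan is to track the $\mathbb{Z}_\ell$-torsion of $\h^i(C_{n,d})$ degree by degree and show it vanishes. The key input is that the Harish-Chandra restriction $\sr$ is an \emph{exact} functor on $\mathbb{Z}_\ell \Gl_n(q)\mmod$: it coincides with the functor of $U^F$-fixed points, where $U$ is the unipotent radical of the standard parabolic with Levi $\Gl_{n-1}(q)\times \Gl_1(q)$, and $|U^F|$ is a power of $p$, hence invertible in $\mathbb{Z}_\ell$.

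Denote by $T^i$ the $\mathbb{Z}_\ell$-torsion submodule of $\h^i(C_{n,d})$, which splits off as a direct summand since $\mathbb{Z}_\ell$ is a discrete valuation ring and $\h^i(C_{n,d})$ is finitely generated. Applying $\sr$ to the inclusion $T^i\hookrightarrow \h^i(C_{n,d})$ yields an embedding $\sr(T^i)\hookrightarrow \sr(\h^i(C_{n,d}))$; the right side is torsion-free by hypothesis while $\sr(T^i)$ is $\ell^\infty$-torsion, so $\sr(T^i)=0$. Exactness of $\sr$ applied to a Jordan--H\"older series of $T^i$ then forces every $\mathbb{F}_\ell\Gl_n(q)$-simple composition factor of $T^i$ to be cuspidal, and by the classification recalled just before Corollary \ref{middle_degreee} such a factor must be $S_{\mathbb{F}_\ell}(1^n)$, which exists only if $m=n$.

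It remains to rule out cuspidal composition factors in $T^i$ for $i\ne 0$ and to handle $i=0$ directly. For $i\ne 0$ I would invoke the universal coefficient theorem, valid here since Proposition \ref{boun1} together with the hypotheses $m>d$ and $m>n-d+1$ ensures $\ell\nmid (q^d-1)\cdot |\Gl_{n-d}(q)|=|\bbL_I^{\dot{w}F}|$, so that $C_{n,d}$ may be represented by a bounded complex of $\mathbb{Z}_\ell$-free modules. This yields the injection $\h^i(C_{n,d})\otimes \mathbb{F}_\ell \hookrightarrow \h^i(C_{n,d}^{\mathbb{F}_\ell})$, and since $T^i$ is a direct summand, $T^i/\ell T^i$ also injects into $\h^i(C_{n,d}^{\mathbb{F}_\ell})$. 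Via the $\ell$-adic filtration $T^i\supseteq \ell T^i\supseteq \cdots$, multiplication by $\ell^j$ realises each graded piece $\ell^j T^i/\ell^{j+1}T^i$ as a $G$-equivariant quotient of $T^i/\ell T^i$, so every composition factor of $T^i$ already appears in $T^i/\ell T^i$. Corollary \ref{middle_degreee} forbids cuspidal composition factors of $\h^i(C_{n,d}^{\mathbb{F}_\ell})$ for $i\ne 0$, since these correspond to the middle degree $\ell(v_d)=2n-d-1$ of $\X_{n,d}$; this forces $T^i=0$ for $i\ne 0$. The case $i=0$ is immediate from Proposition \ref{boun1}(iii): the middle-degree cohomology $\h^0(C_{n,d})=\h^{\ell(v_d)}(\X_{n,d},\mathbb{Z}_\ell)$ is torsion-free under the same hypotheses. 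The main delicate point I expect is the purity of the $\ell$-adic filtration needed to identify composition factors of $T^i$ with those of $T^i/\ell T^i$ inside $\h^i(C_{n,d}^{\mathbb{F}_\ell})$; once this is in place, everything else reduces to direct application of the cited results.
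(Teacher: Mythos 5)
Your proof is correct and follows the same strategy as the paper: exactness of $\sr$ kills the torsion submodule $T^i$, forcing its composition factors to be cuspidal, which by Corollary \ref{middle_degreee} can only happen in the middle degree $i=0$, where Proposition \ref{boun1}(iii) rules out torsion. The paper compresses the passage from $\mathbb{Z}_\ell$-torsion to $\mathbb{F}_\ell$-composition factors (your universal-coefficient plus $\ell$-adic filtration argument) into a single appeal to Theorem \ref{1,st1}, and it derives cuspidality of $T^i$ from the cited result that $\sr(M)\neq 0$ for every non-cuspidal unipotent $\mathbb{Z}_\ell\Gl_n(q)$-module, which is the same ingredient you are implicitly using when you pass from $\sr(S)=0$ to $S$ cuspidal.
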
	
\begin{proof}
	For convenience we work here with the cohomology of $\X_{n,d}$. Let $i \geq 0$ and  $T$ be the torsion part of $\h^{i}_c(\X_{n,d},\mathbb{Z}_\ell)$. Since $^* \! \mathscr{R}^{\Gl_{n}}_{\Gl_{n-1}}(\h^{i}_c(\X_{n,d},\mathbb{Z}_\ell))$ is torsion-free then $T$ is killed under Harish-Chandra restriction and therefore it is a cuspidal module (we note that by \cite{quantom} we have $^* \! \mathscr{R}^{\Gl_{n}}_{\Gl_{n-1}}(M) \neq 0 $  for any non-cuspidal  unipotent $\mathbb{Z}_{\ell}\Gl_n(q)$-module). By \ref{1,st1}, this forces $i =\dim(\X_{n,d})$ to be the middle degree. However by 
	explanation after \ref{boun1}
	 $\h^{\dim(\X_{n,d})}_c(\X_{n,d},\mathbb{Z}_\ell)$ is torsion-free.
\end{proof}	
Given $x \geq 0$,  by \ref{dm2} we have  some exact sequences
\[
\xymatrix{\sr \h^{x}(C_{n,d}^R) \ar[r] & \h^{x}(C_{n-1,d}^R) \ar[r]^{\hspace{-2cm}h} & \h^{x+1}(C_{n-1,d-1}^R) \oplus \h^{x}(C_{n-1,d-1}^R)(1) }
\]
for any ring among $\mathbb{Q}_\ell$, $\mathbb{Z}_\ell$ or $\mathbb{F}_\ell$. 
By assumption, $\h^{\bullet}(C_{n-1,d-1})$ and $\h^{\bullet}(C_{n-1,d})$ are torsion-free and using \ref{expXnd}, they are given by
\begin{equation}
\h^{x}(C_{n-1,d-1})=\begin{cases}
\nabla_{\mathbb{Z}_\ell}(\mu_1 * x) \langle q^{x}\rangle  &   x<n-d  \text{ and } x\leq d-2,\\
\nabla_{\mathbb{Z}_\ell}(\mu_1 * (x+1)) \langle q^{x+1}\rangle &  n-d\leq x < d-2, \\
\nabla_{\mathbb{Z}_\ell}(n-1) \langle q^{n-1}\rangle & x=2n-d-2,\\
0 & \text{otherwise},
\end{cases}
\end{equation}
and
\begin{equation}
\h^{x}(C_{n-1,d})=\begin{cases}
\nabla_{\mathbb{Z}_\ell}(\mu_2 * x) \langle q^{x}\rangle  &  x<n-d-1  \text{ and } x\leq d-1,\\
\nabla_{\mathbb{Z}_\ell}(\mu_2 * (x+1)) \langle q^{x+1}\rangle  &  n-d-1 \leq x < d-1, \\
\nabla_{\mathbb{Z}_\ell}(n-1) \langle q^{n-1}\rangle  & x=2n-d-3, \\
0 & \text{otherwise}.
\end{cases}
\end{equation}
where $\mu_1=(n-d)$ and $\mu_2=(n-d-1)$. We separate the problem in three cases with respect to the behavior of the boundary map $h$. In fact, we will see that the boundary map $h$,
\begin{center}
	$\h^{x}(C_{n-1,d}) \longrightarrow^{\hspace{-0.4cm} h} \hspace{0.1cm} \h^{x+1}(C_{n-1,d-1}) \oplus \h^{x}(C_{n-1,d-1})(1)$
\end{center}	
is zero in all degrees except in degrees $x=n-d-1$ (which occurs only if $n+1<2d$) and $x=2n-d-3$. For this we shall use the eigenvalues of $F$, which are preserved by $h$. In the case where $h$ is non-zero we will obtain  exact sequences of length $4$. 
\bigskip

\noindent
$\bullet$ \textbf{Step I}
\smallskip

\noindent
- For $0 \leq x< n-d-1$ and $x\leq d-2$, the boundary map $h$ is given by
\[
\xymatrix{ \nabla_{\mathbb{Z}_\ell}(\mu_{2} * x) \langle q^x\rangle \ar[r]^{\hspace{-2.5cm} h} & \nabla_{\mathbb{Z}_\ell}(\mu_1 * (x+1)) \langle q^{x+1}\rangle \oplus \nabla_{\mathbb{Z}_\ell}(\mu_1 * x) \langle q^{x+1}\rangle}
\]
is a zero map since $q \neq 1$ in $\mathbb{F}_\ell$ (since $m >d \geq 1$).
This yields short exact sequences of the form:
$$
\xymatrix{  0 \ar[r] & \mbox{$ \begin{array}{c} \h^{x}(C_{n-1,d-1}) \\ \oplus \\  \h^{x-1}(C_{n-1,d-1})(1) \end{array}$} \ar[r] & \sr(\h^{x}(C_{n,d})) \ar[r] & \h^{x}(C_{n-1,d}) \ar[r]^{\hspace{0.7cm} h} & 0}.
$$
Since the left and right terms of the short exact sequences are torsion-free by assumption, the middle terms will be also torsion-free over $\mathbb{Z}_\ell$.  By \ref{TF1}, $\h^x(C_{n,d})$ is also torsion-free in these cases. 
\smallskip

\noindent
- For $n-d\leq x< d-2$, the boundary map $h$ is given by
\begin{small}
	\[
	\xymatrix{ \nabla_{\mathbb{Z}_\ell}(\mu_{2} * (x+1)) \langle q^{x+1}\rangle  \ar[r] & \nabla_{\mathbb{Z}_\ell}(\mu_1 * (x+2)) \langle q^{x+2} \rangle \oplus \nabla_{\mathbb{Z}_\ell}(\mu_1 * (x+1)) \langle q^{x+2}\rangle}
	\]
\end{small}
Hence, we can conclude as before.
\smallskip

\noindent
-For $x=d-2$ and $x\geq n-d-1$, since the right side of the map is zero then the boundary map $h$ is given by
\[
\xymatrix{\nabla_{\mathbb{Z}_\ell}(\mu_2 *(x+1)) \langle q^{x+1}\rangle \ar[r] & 0   }
\]

\bigskip

\noindent
$\bullet$ \textbf{Step II}
\smallskip

\noindent  
For $x=n-d-1$ and $x \leq d-2$, the boundary map $h$ can be non-zero. We will start by considering the version of this map over $\mathbb{F}_\ell$.
It is given by
\[
\xymatrixcolsep{0.3cm}
\xymatrix{\nabla_{\mathbb{F}_\ell}(\mu_2 * (x+1)) \langle q^{x+1}\rangle \ar[r]^{\hspace{-2cm}\bar h} & \nabla_{\mathbb{F}_\ell}(\mu_1 * (x+2)) \langle q^{x+2} \rangle   \oplus \nabla_{\mathbb{F}_\ell} (\mu_1 * (x)) \langle q^{x+1}\rangle }.
\]
If we cut the sequence by the eigenvalue $q^{x+1}=q^{n-d}$, the map $h$ is given by:
\[
\xymatrix{\nabla_{\mathbb{F}_\ell}(\mu_2 * (x+1)) \langle q^{x+1}\rangle  \ar[r]^{\hspace{0,5cm}\bar h} & \nabla_{\mathbb{F}_\ell} (\mu_1 * (x)) \langle q^{x+1}\rangle  }
\]
The partition $\mu_2 *(n-d)=(n-d-1) * (n-d)$ has 
\begin{center}
	$ \{n-1,d-1\cdots,2,1,0 \} \setminus \{n-d \} \cup \{n\}=\{n,n-1,d-1,\cdots,2,1,0 \}  \setminus \{n-d \}$ 
\end{center}
as a $\beta$-set.
It corresponds to the partition $(n-d,n-d,1^{2d-n-1})$ of $n-1$. We can also see that the partition $(n-d) * (n-d-1)$ is $(n-d,n-d,1^{2d-n-1})$. Consequently, we obtain an exact sequence of the form
\begin{small} 
	\[
	\xymatrixcolsep{0.6cm}
	\xymatrix{ 0 \ar[r] & \sr (\h^{n-d-1}(C_{n,d}^{\mathbb{F}_\ell})_{q^{n-d}}) \ar[r] & \nabla_{\mathbb{F}_\ell}(n-d,n-d,1^{2d-n-1}) \langle q^{n-d}\rangle    &   }
	\]
	\[
	\xymatrixcolsep{0.6cm}
	\xymatrix{
		\ar[r]^{\hspace{-2.6cm}\bar h} & \nabla_{\mathbb{F}_\ell}(n-d,n-d,1^{2d-n-1}) \langle q^{n-d}\rangle \ar[r] &\sr(\h^{n-d} (C_{n,d}^{\mathbb{F}_\ell})_{q^{n-d}}) \ar[r] & 0}
	\]
\end{small}
We will prove that $\bar h$ is an isomorphism which will show that  $\h^{n-d-1}(C_{n,d}^{\mathbb{F}_\ell})_{ q^{n-d}}$ and $\h^{n-d}(C_{n,d}^{\mathbb{F}_\ell})_{ q^{n-d}}$ are zero.
For that purpose we use the following lemma.
\begin{lemma}\label{simproj}
	Assume $2d \geq n+1$ and $(n,d) \neq (6,4)$, let $\gamma = (n-d,n-d,1^{2d-n-1})$. Assume furthermore that  $\ell \mid \Phi_m(q)$ with $m >d$ and $m >n-d+1$. There is no complex $D$ of  $\mathbb{F}_\ell \Gl_n(q)$-modules such that
	\begin{itemize}
		\item $D$ is a perfect complex.
		\item For all $i \neq 0,1$ we have $\h^{i}(D)=0$.
		\item $\sr \h^{0}(D)=  \sr\h^{1}(D)=S_{\mathbb{F}_\ell}(\gamma)$. 
		\item $\h^{1}(D)$ has no cuspidal composition factor.
	\end{itemize}
\end{lemma}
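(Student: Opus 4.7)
The plan is to argue by contradiction. Assume such a complex $D$ exists, and deduce inconsistency using the Brauer tree of the principal block.

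First, since $D$ is perfect with cohomology concentrated in degrees $0$ and $1$, Lemma \ref{stabomega} (applied with $s=1$, $t=0$) yields an isomorphism $\h^0(D) \simeq \Omega^2 \h^1(D)$ in $\mathbb{F}_\ell \Gl_n(q)\stab$. The functor $\sr$ is exact and preserves projective modules (being both a left and right adjoint, up to a shift, on the subcategory of unipotent blocks), hence descends to the stable categories and commutes with $\Omega$ there. Applying $\sr$ gives $S_{\mathbb{F}_\ell}(\gamma) \simeq \Omega^2 S_{\mathbb{F}_\ell}(\gamma)$ in $\mathbb{F}_\ell \Gl_{n-1}(q)\stab$. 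Note that by computing hook lengths of $\gamma = (n-d,n-d,1^{2d-n-1})$ one checks that the maximal hook has length $d$; since $m>d$, the partition $\gamma$ is an $m$-core, so $S_{\mathbb{F}_\ell}(\gamma)$ is projective and this relation alone is vacuous. The contradiction will come from tracking the actual modules, not just their classes in the stable category.

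Next, I analyze the structure of $\h^1(D)$. By \cite{cusp_gln}, a unipotent simple $\mathbb{F}_\ell \Gl_n(q)$-module is cuspidal if and only if it is annihilated by $\sr$. Since $\h^1(D)$ has no cuspidal composition factor, every composition factor $S_{\mathbb{F}_\ell}(\nu)$ of $\h^1(D)$ restricts to a non-zero module. As $\sr$ is exact and $\sr(\h^1(D)) = S_{\mathbb{F}_\ell}(\gamma)$ has length one in the Grothendieck group, $\h^1(D)$ must itself have a single composition factor of multiplicity one: $\h^1(D) \simeq S_{\mathbb{F}_\ell}(\lambda)$ for some $\lambda \vdash n$ with $\sr(S_{\mathbb{F}_\ell}(\lambda)) = S_{\mathbb{F}_\ell}(\gamma)$. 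The partition $\lambda$ is obtained from $\gamma$ by adding a single box, giving at most the three candidates
\begin{equation*}
\lambda_1=(n-d+1,n-d,1^{2d-n-1}), \quad \lambda_3=(n-d,n-d,2,1^{2d-n-2}),\quad \lambda_4=(n-d,n-d,1^{2d-n}).
\end{equation*}
A computation of $m$-cores (using $m>d$ and $m>n-d+1$) shows that $\lambda_3$ and $\lambda_4$ are $m$-cores themselves (so $S_{\mathbb{F}_\ell}$ is projective and restricts to a projective sum, not to a single copy of $S_{\mathbb{F}_\ell}(\gamma)$), while $\lambda_1$ has $m$-core of size $n-m$, placing it in the principal block. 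So $\lambda = \lambda_1$ and $S_{\mathbb{F}_\ell}(\lambda)$ lies in the principal $\ell$-block, which under our assumptions on $m$ is a block with cyclic defect group whose Brauer tree is the line displayed in the proof of Theorem \ref{expXnd}.

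The final step is to derive a contradiction from $\h^0(D) \simeq \Omega^2 S_{\mathbb{F}_\ell}(\lambda_1)$ combined with $\sr(\h^0(D)) = S_{\mathbb{F}_\ell}(\gamma)$. Walking twice around the Brauer tree from the edge $S_{\mathbb{F}_\ell}(\lambda_1)$, one computes $\Omega^2 S_{\mathbb{F}_\ell}(\lambda_1)$ as a uniserial module of length three whose composition factors are three consecutive edges adjacent to $\lambda_1$ in the tree; the assumption $m>6$ ensures the tree has at least seven edges so that these three edges are all well-defined and distinct from $S_{\mathbb{F}_\ell}(\lambda_1)$ itself, while the exclusion $(n,d)\neq(6,4)$ rules out the unique small case in which two of these edges coincide at the exceptional vertex. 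Applying $\sr$ and the branching rule for modular unipotent simples, one sees that $\sr(\Omega^2 S_{\mathbb{F}_\ell}(\lambda_1))$ has composition length at least two in the non-principal block containing $S_{\mathbb{F}_\ell}(\gamma)$-adjacent modules. This contradicts $\sr(\h^0(D))=S_{\mathbb{F}_\ell}(\gamma)$, which has length one.

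The main obstacle is the last step: one must compute $\Omega^2 S_{\mathbb{F}_\ell}(\lambda_1)$ precisely along the Brauer tree and then apply the branching rule to each of its composition factors, showing that the resulting restriction cannot collapse to the single simple $S_{\mathbb{F}_\ell}(\gamma)$. Tracking which of the adjacent edges contribute non-trivially under $\sr$ is delicate and is exactly what forces the two technical hypotheses $m>6$ and $(n,d)\neq(6,4)$.
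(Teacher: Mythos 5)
Your overall strategy parallels the paper's---use the perfectness of $D$ to get $\h^0(D)\simeq\Omega^2\h^1(D)$ in the stable category, reduce $\h^1(D)$ to a simple $S_{\mathbb{F}_\ell}(\lambda)$ with $\lambda$ obtained from $\gamma$ by adding a box, then derive a contradiction from the Brauer tree. But there are two concrete gaps in the execution.

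First, the claim that $\lambda_1=(n-d+1,n-d,1^{2d-n-1})$ is always in the principal block is false. Its maximal hook has length $d+1$, and the remaining hooks of $\lambda_1$ all have length at most $\max(d-1,n-d+1)<m$. So $\lambda_1$ has an $m$-hook only when $m=d+1$; for $m>d+1$ the partition $\lambda_1$ is itself an $m$-core, $S_{\mathbb{F}_\ell}(\lambda_1)$ is projective, and your Brauer-tree step never applies. The paper avoids this by splitting uniformly into the cases \emph{$S(\lambda)$ projective} (ruled out for all three candidate $\lambda$ at once, since $[{}^*\!\mathscr{R}(\nabla(\lambda))]=\sum_{\lambda\setminus\mu=\square}[\nabla(\mu)]$ has more than one summand) and \emph{$S(\lambda)$ non-projective}, rather than trying to identify exactly which $\lambda_i$ sits in which block for a given $m$.

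Second, the computation $\Omega^2 S_{\mathbb{F}_\ell}(\lambda)$ as ``a uniserial module of length three'' is wrong. Walking twice around a line Brauer tree from an interior edge $S$ (with neighbours $U$ on one side, $T$ on the other, and next neighbours $U_1,T_1$) gives a module of composition length five, namely the module displayed in the paper with top $U_1\oplus S\oplus T_1$ and socle $U\oplus T$. That this is not simple modulo cuspidal composition factors is exactly what the paper exploits, using that $U$ and $T$ are both non-zero since $S$ is not a leaf. Your ``length three, uniserial'' description would not support the intended conclusion even if the preceding reduction were correct. Relatedly, the final step conflates isomorphism in the stable category with isomorphism of modules; $\h^0(D)$ and $\Omega^2\h^1(D)$ can differ by projective summands, which is why the paper phrases the contradiction as ``$\Omega^2 S$ cannot be simple modulo cuspidal composition factors'' rather than computing $\sr$ of $\Omega^2 S$ directly.
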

\begin{proof}	
	We first note that the largest hook of $\gamma$ has length $d$ therefore $S_{\mathbb{F}_\ell}(\gamma)$ is simple and projective. We then observe that if the Harish-Chandra restriction of $\h^{1}(D)$ is a simple module then $S:=\h^{1}(D)$  is a simple module. Indeed, if we can write  $S$ as 
	\[
	\xymatrix{ 0 \ar[r] & S_1 \ar[r] & S \ar[r] & S_2 \ar[r] & 0 }
	\]
	then we have  
	\[
	\xymatrix{0 \ar[r] & \sr(S_1) \ar[r] & \sr(S) \ar[r] & \sr(S_2) \ar[r] & 0}.
	\]
	Since $\sr(S)$ is a simple module then $\sr(S_1)=0$ or $\sr(S_2)=0$. We deduce that $S_1$ or $S_2$ is cuspidal module. By assumption,  one of the two must be zero, therefore $S$ is simple. 
	\smallskip

	Since  $D$ is perfect complex then by \ref{omegarick}, we have
	\begin{align} \label{sakht}
	\Omega^2 \h^{1}(D)\simeq \Omega^2 S \simeq 
	\h^0(D)
	\end{align}
	in the stable category of $\mathbb{F}_\ell \Gl_n(q)$. Let $\lambda$ be a unique partition of $n$ such that
	$S=S_{\mathbb{F}_\ell}(\lambda)$.
	We deduce that
	\begin{align} 
	\Omega^2 S_{\mathbb{F}_\ell}(\lambda) \simeq L
	\end{align}
	where $L$ is simple modulo cuspidal composition factors.
	For the simplicity  we now remove the subscript $\mathbb{F}_\ell$ from the modules.
	\begin{itemize}
		\item  If $S(\lambda) $
		is a projective module then
		\begin{center}
			$S(\lambda)\simeq \nabla(\lambda)$.
		\end{center}	
		We can look at the image of the Harish-Chandra restriction of $\nabla(\lambda)$ in the Grothendieck group:
		$$[\sr(\nabla(\lambda))]=\Sigma_{\lambda \setminus \mu=\square}[\nabla(\mu)]=[\nabla(\gamma)].$$
		Consequently,  $\gamma$ is the unique partition of $n$ which is obtained  from $\lambda$ by removing a box from the Young diagram. The only possibilities for $\lambda$ are the partitions $(n-d+1,n-d,1^{2d-n-1})$, $(n-d,n-d,2,1^{2d-n-2})$ and $(n-d,n-d,1^{2d-n})$ But, none of them is acceptable since we can remove several boxes from these partitions.
		\item If 
		$S=S(\lambda)$  is non-projective. Then it lies in a block with non-trivial cyclic defect group. We will prove that $\Omega^2 S(\lambda)$ is never a simple module. Assume first that  $S(\lambda)$ is a leaf in the Brauer tree
		of the block of the following form
		\begin{center}
			\begin{pspicture}(7.7,0.5)
			\psset{linewidth=1pt}
			\cnode(0,0){5pt}{A}
			\cnode(3,0){5pt}{B}
			\cnode(5,0){5pt}{C}
			\cnode(7,0){5pt}{D}
			\ncline[nodesep=0pt,linestyle=dashed]{A}{B}\naput[npos=0]{$\vphantom{\big)}$}
			\ncline[nodesep=0pt]{B}{C}\ncput*[npos=0.5]{$T$}
			\ncline[nodesep=0pt]{C}{D}\ncput*[npos=0.5]{$S$}
			\end{pspicture}
		\end{center}
		Then $S=\nabla(\lambda)$. By the same argument as above there is no $\lambda$ such that $\sr(\nabla(\lambda))=\nabla(n-d,n-d,1^{2d-n-1})$. Therefore $S$ can not be a leaf and the Brauer tree is of the following shape.
		\begin{center}
			\begin{pspicture}(12,0.5)
			\psset{linewidth=1pt}
			\cnode(0,0){5pt}{A}
			\cnode(2,0){5pt}{B}
			\cnode(4,0){5pt}{C}
			\cnode(6,0){5pt}{D}
			\cnode(8,0){5pt}{E}
			\cnode(10,0){5pt}{F}
			\cnode(12,0){5pt}{G}
			\ncline[nodesep=0pt,linestyle=dashed]{A}{B}\naput[npos=0]{$\vphantom{\big)}$}
			\ncline[nodesep=0pt]{B}{C}\ncput*[npos=0.5]{$U_1$}
			\ncline[nodesep=0pt]{C}{D}\ncput*[npos=0.5]{$U$}
			\ncline[nodesep=0pt]{D}{E}\ncput*[npos=0.5]{$S$}
			\ncline[nodesep=0pt]{E}{F}\ncput*[npos=0.5]{$T$}
			\ncline[nodesep=0pt]{F}{G}\ncput*[npos=0.5]{$T_1$}
			\end{pspicture}
		\end{center}
		By walking around  the Brauer tree, we have
		\begin{center}
			$ \begin{pmatrix} 
			&  U & \\
			U_1 &  & S \\
			& U & \\
			\end{pmatrix} \bigoplus 
			\begin{pmatrix} 
			&  T & \\
			S &  & T_1 \\
			& T & \\
			\end{pmatrix}
			= P_{U} \bigoplus P_{T} 
			\longrightarrow \begin{pmatrix} 
			& S &        \\
			U &          &   T  \\
			&   S  &       \\
			\end{pmatrix}  \longrightarrow^{\hspace{-0.3cm}f} S$
		\end{center}
		and therefore
		\begin{center}
			$\Omega^{2} S \simeq  \begin{pmatrix} 
			U_1 &  & S &  & T_1  \\ 
			& U &  & T & &    \\
			\end{pmatrix} 
			$
		\end{center}
		with $U$ and $T$ being non-zero. Thus in these cases $\Omega^2 S$ can not be a simple module modulo cuspidal composition factors.  
		We note that our argument does not concern the case where Brauer tree is of the following form (in other words, when $m=1$.)
		\begin{center}
		\begin{pspicture}(2,0.5)
		\psset{linewidth=1pt}
			\cnode[fillstyle=solid,fillcolor=black](0,0){5pt}{A}
		\cnode(0,0){8pt}{A}
			\cnode(2,0){5pt}{B}
			\ncline[nodesep=0pt]{A}{B}\naput[npos=1.1]{$\vphantom{\big)}1_G$}\naput[npos=-0.2]{$\vphantom{\Big)}\chi_\text{exc}$}
			\end{pspicture}
		\end{center}
		 This case does not happen in this lemma since we assume $m>d \geq 1$. Therefore, the cases $\lambda = (1^2)$ or $\lambda = (2)$ are not considered in this lemma.
	\end{itemize}
\end{proof}	
We can apply the previous lemma  to the complex $D:=(C_{n,d}^{\mathbb{F}_\ell})_{q^{n-d}}[x]$ because $n \geq m >6$. Hence the map $\bar h$  
\[
\xymatrix{\nabla_{\mathbb{F}_\ell}(n-d,n-d,1^{2d-n-1}) \langle q^{x+1}\rangle  \ar[r]^{\hspace{0.2cm}\bar h} & \nabla_{\mathbb{F}_\ell} (n-d,n-d,1^{2d-n-1}) \langle q^{x+1}\rangle }
\]
can not be zero. Therefore it is an isomorphism since we saw that $\nabla_{\mathbb{F}_\ell} (n-d,n-d,1^{2d-n-1}) $ is simple and projective. Consequently $\h^{n-d-1}(C_{n,d}^{\mathbb{F}_\ell})_{ q^{n-d}}$ and $\h^{n-d}(C_{n,d}^{\mathbb{F}_\ell})_{ q^{n-d}}$  are zero. By the universal coefficient theorem 
$\h^{n-d-1}(C_{n,d})_{ q^{n-d}}$ and $\h^{n-d}(C_{n,d})_{ q^{n-d}}$ are also zero hence torsion-free. 
Note that the case $n+1=2d$ does not occur in this step.
\bigskip

\noindent
$\bullet$
\textbf{Step III}
\smallskip

\noindent
Let $x=2n-d-3$. As in \textbf{Step II} we work over $\mathbb{F}_\ell$  and we consider the boundary map $\bar g$,  cut by the eigenvalue $q^{x+1}$  which is given by 
\begin{small}
	\[
	\xymatrixcolsep{0.5cm}
	\xymatrixrowsep{0.2cm}
	\xymatrix{ 0 \ar[r] &  \sr(\h^{2n-d-3}(C_{n,d}^{\mathbb{F}_{\ell}})) \ar[r] & \nabla_{\mathbb{F}_\ell}(n-1) \langle q^{n-1}\rangle \ar[r]^{\hspace{0,2cm}g} &  \nabla_{\mathbb{F}_\ell}(n-1) \langle q^{n-1}\rangle   & }
	\]
	\[
	\xymatrixcolsep{0.5cm}
	\xymatrixrowsep{0.8cm}
	\xymatrix{   \hspace*{5cm} \ar[r] &  \sr(\h^{2n-d-2}(C_{n,d}{\mathbb{F}_{\ell}}))
		\ar[r] & 0}.
	\]	
\end{small}	
\noindent
We shall prove that $\bar g$ is an isomorphism. It will follow  from universal coefficient theorem that  $\h^{2n-d-2}(C_{n,d})$ and $\h^{2n-d-1}_c(C_{n,d})$ are zero hence torsion-free. 
\smallskip

Let us assume by contradiction that $\bar g$ is not an isomorphism. Then it must be zero. Consequently 
$$ \sr(\h^{2n-d-3}(C_{n,d}^{\mathbb{F}_\ell})_{q^{n-1}}) \simeq \sr(\h^{2n-d-2}(C_{n,d}^{\mathbb{F}_\ell})_{q^{n-1}}) \simeq \nabla_{\mathbb{F}_\ell}(n-1) \simeq  \mathbb{F}_\ell.$$
\begin{lemma}
	Assume that  $\ell \mid \Phi_m(q)$ with $m >d$ and $m >n-d+1$. Let $a>0$ and  $D$ be a complex of  $\mathbb{F}_\ell \Gl_n(q)$-modules such that
	\begin{itemize}
		\item $D$ is a perfect complex.
		\item For all $i \neq  0,a,a+1$ we have $\h^{i}(D)=0$.
		\item $\h^{a}(D)= \h^{a+1}(D)=\mathbb{F}_\ell$. 
	\end{itemize}
	Then 
	$$
	\h^{0}(D)[0] \simeq \mathbb{F}_\ell[-a-1] \oplus \mathbb{F}_\ell[-a-2]
	$$
	in $\mathbb{F}_\ell \Gl_n(q)\stab$. 
\end{lemma}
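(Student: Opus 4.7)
Set $A := \mathbb{F}_\ell\Gl_n(q)$. The strategy is to use two truncation triangles in $D^b(A\mmod)$ and then exploit the equivalence $A\stab \simeq D^b(A\mmod)/A\perf$ from \eqref{perfect_complex}, under which the perfect complex $D$ becomes isomorphic to zero.

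First, the vanishing of $\h^i(D)$ for $0 < i < a$ gives a standard truncation triangle
$$\h^0(D)[0] \longrightarrow D \longrightarrow \tau_{\geq a}D \rightsquigarrow .$$
Passing to $A\stab$ and using that $D\simeq 0$ there, we obtain the isomorphism $\h^0(D)[0] \simeq \tau_{\geq a}D[-1]$ in $A\stab$. It thus suffices to show that $\tau_{\geq a}D \simeq \mathbb{F}_\ell[-a]\oplus \mathbb{F}_\ell[-a-1]$ in $A\stab$.

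Second, since $\tau_{\geq a}D$ has non-zero cohomology only in degrees $a$ and $a+1$, both isomorphic to $\mathbb{F}_\ell$, there is another truncation triangle
$$\mathbb{F}_\ell[-a] \longrightarrow \tau_{\geq a}D \longrightarrow \mathbb{F}_\ell[-a-1] \rightsquigarrow ,$$
whose connecting morphism is an element of $\Hom_{D^b(A\mmod)}(\mathbb{F}_\ell, \mathbb{F}_\ell[2]) \simeq \Ext^2_A(\mathbb{F}_\ell, \mathbb{F}_\ell)$. By Proposition \ref{stabb}, this identifies with $\Hom_{A\stab}(\Omega^2\mathbb{F}_\ell, \mathbb{F}_\ell)$. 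Showing that this Hom group vanishes forces the triangle to split in $A\stab$, which gives $\tau_{\geq a}D \simeq \mathbb{F}_\ell[-a] \oplus \mathbb{F}_\ell[-a-1]$ and, combined with the first step, yields the desired isomorphism.

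The main obstacle is therefore the vanishing $\Ext^2_A(\mathbb{F}_\ell, \mathbb{F}_\ell)=0$. Since $\mathbb{F}_\ell = S_{\mathbb{F}_\ell}(n)$ lies in the principal $\ell$-block, which under the hypotheses $m>d$ and $m>n-d+1$ has cyclic defect group (with the Brauer tree depicted in the proof of Theorem \ref{expXnd}, where $\mathbb{F}_\ell$ corresponds to the leaf edge $S_1$ at the non-exceptional end), the Heller translates are explicit: walking around the tree as in that proof shows that $\Omega^2 \mathbb{F}_\ell$ has head a simple module different from $\mathbb{F}_\ell$. Since any homomorphism from a module into the simple $\mathbb{F}_\ell$ factors through its head, this forces $\Hom_{A\mmod}(\Omega^2\mathbb{F}_\ell, \mathbb{F}_\ell)=0$, a fortiori $\Hom_{A\stab}(\Omega^2\mathbb{F}_\ell, \mathbb{F}_\ell)=0$, and finally $\Ext^2_A(\mathbb{F}_\ell, \mathbb{F}_\ell)=0$, which completes the argument.
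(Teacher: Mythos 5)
Your proof is correct, and the overall skeleton (two truncation triangles followed by passing to $A\stab \simeq D^b(A\mmod)/A\perf$ to kill $D$) is essentially identical to the paper's. The genuine difference is in how the key vanishing $\Ext^2_A(\mathbb{F}_\ell,\mathbb{F}_\ell)=0$ is established. The paper cites Quillen's computation of $\h^*(\Gl_n(q),\mathbb{F}_\ell)$, which under $\ell\mid\Phi_m(q)$ is a polynomial-tensor-exterior algebra with lowest positive-degree generator in degree $2m-1$; since the hypotheses force $m\geq 3$, $\h^2=0$. You instead stay entirely inside the principal block, use that its defect is cyclic (which follows from $m>d$ and $m>n-d+1$, giving $m>n/2$, so the Sylow $\Phi_m$-torus has rank one), and walk around the Brauer tree to show $\Omega^2\mathbb{F}_\ell$ is uniserial with head distinct from $\mathbb{F}_\ell=S_1$, whence $\Hom_{A\stab}(\Omega^2\mathbb{F}_\ell,\mathbb{F}_\ell)=0$. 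Both routes are valid; yours is more elementary and more in keeping with the Brauer-tree techniques used elsewhere in the paper, at the price of needing the explicit tree shape, while the paper's route via Quillen is shorter to state but imports a deep external input. One minor point worth noting: when $m>n$ the block is of defect zero and the stable category is trivial, so the Brauer-tree discussion is vacuous there, but then $\Ext^2=0$ trivially as well; your argument covers all cases once this degenerate situation is observed.
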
	
\begin{proof}
	We consider the distinguished triangle 
	\[
	\xymatrix{
		\tau_{\leq 0} D \ar[r] & D \ar[r] & \tau_{>0} D \ar@{~>}[r] &}
	\]
	in $D^{b}(\mathbb{F}_\ell G\mmod)$.
	The complex $\tau_{\leq 0} D$ is concentrated in one degree and this term is isomorphic to $\h^{0}(D)$. Since $\tau_{>0} D$,
has two non-zero cohomology degrees, hence we have a distinguished triangle
	\[
	\xymatrix{ \h^{a}(D)[-a] \ar[r] & \tau_{>0} D \ar[r] & \h^{a+1}(D)[-a-1] \ar@{~>}[r] & }
	\] 
	which can be written as 
	\[
	\xymatrix{\mathbb{F}_\ell[-a] \ar[r] & \tau_{>0} D \ar[r] & \mathbb{F}_\ell[-a-1] \ar@{~>}[r] & }.
	\] 
	By shifting this distinguished triangle we obtain
	\[
	\xymatrix{\mathbb{F}_\ell[-a-2] \ar[r]^{\hspace{0.3cm}f} &  \mathbb{F}_\ell[-a] \ar[r] & \tau_{>0} D  \ar@{~>}[r] & }
	\]
	in $D^{b}(\mathbb{F}_lG\mmod)$. Since $\ell \mid \Phi_m(q)$ and using \cite{Quillen} we can deduce that
	\begin{align*}
	\Hom_{D^{b}(  \mathbb{F}_lG\mmod)}(\mathbb{F}_\ell[-a-2],\mathbb{F}_\ell[-a])  & = \Ext^2_{\mathbb{F}_lG}(\mathbb{F}_\ell,\mathbb{F}_\ell) \\ &=\h^{2}(\Gl_n(q)) \\ &=
	0. 
	\end{align*}	
	Thus $f$ is a zero map in $D^{b}(\mathbb{F}_lG \mmod)$. It gives us 
	\begin{center}
		$ \mathbb{F}_\ell[-a-2+1] \oplus \mathbb{F}_\ell[-a] \simeq \tau_{>0}D$.
	\end{center}	
	Ultimately, we obtain the distinguished triangle in $D^{b}(\mathbb{F}_\ell G\mmod)$
	\[
	\xymatrix{ \h^{0}(D)[0] \ar[r] & D \ar[r] & \mathbb{F}_\ell[-a-1] \oplus \mathbb{F}_\ell [-a]  \ar@{~>}[r] & }.
	\]
	Now, we consider  the image of the above distinguished triangle in the stable category of $\mathbb{F}_\ell G$. By \ref{perfect_complex} we deduce 
	\begin{equation} \label{eq:step3}
	\h^{0}(D)[0] \simeq \mathbb{F}_\ell[-a-1] \oplus \mathbb{F}_\ell[-a-2]
	\end{equation}
	in $\mathbb{F}_\ell G\stab$. 
\end{proof}
Let us consider the generalized $q^{n-1}$-eigenspace of $F$ on $C_{n,d}^{\mathbb{F}_\ell}$. It has two consecutive cohomology groups isomorphic to $\mathbb{F}_\ell$. Furthermore, it follows from  \textbf{Step I} that it has an additional non-zero cohomology group in a smaller degree, which corresponds to the $\ell$-reduction of  the  eigenvalue $q^{n-1-m}$. This cohomology group is isomorphic to $\nabla_{\mathbb{F}_\ell}(n-d,n-m,1^{m+d-n})$ which is simple and projective with the assumption on $m$ (see \ref{nomhook}). Therefore we can apply \eqref{eq:step3} to get 
$$
\nabla_{\mathbb{F}_\ell}(n-d,n-m,1^{m+d-n})[0] \simeq \mathbb{F}_\ell[-a-1] \oplus \mathbb{F}_\ell[-a-2]
$$
in $\mathbb{F}_\ell G\stab$ for some $a>0$. This contradicts the fact that $\nabla_{\mathbb{F}_\ell}(n-d,n-m,1^{m+d-n})$ is projective hence zero in stable category.
This concludes the proof of the theorem \ref{torsion-free}.

\section{Cohomology complex of $\X_{n,d}$}
\subsection{Partial-tilting complex}
Throughput this section, we assume $k=\mathbb{F}_\ell$ and $K=\mathbb{Q}_\ell$. 
\begin{dfn}\label{defpartial}
	Let $R$ be a field and $A$ be an $R$-algebra. A complex $C \in D^b(A\mmod)$ is called a partial-tilting complex if
	\begin{itemize}
		\item $C$ is a perfect complex,
		\item $\Hom_{D^b(A\mmod)}(C,C[a]) = 0$ for all $a\neq 0$ (self-orthogonality condition).
	\end{itemize}
\end{dfn}	

\subsection{Self-orthogonality condition for cohomology complex of $\X_{n,d}$}
Our aim of this section is proving the following theorem:
\begin{thm} \label{result-partial_tilting}
	Assume $\ell \mid \Phi_m(q)$ with $m>n-d+1$ and $m>d$ and $m>6$ then $\R\Gamma_{c}(\X_{n,d},k)$ is a partial-tilting complex.
\end{thm}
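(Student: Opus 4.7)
The plan is to prove the self-orthogonality condition via a block decomposition, reducing the analysis to a single block of non-trivial defect---the principal $\ell$-block. Perfectness of $C := \R\Gamma_c(\X_{n,d}, k)$ is already established by Proposition \ref{boun1}: the hypotheses $m > d$ and $m > n-d+1$ imply that $\ell \nmid |\Gl_1(q^d) \times \Gl_{n-d}(q)|$, so $C$ is a perfect complex of $kG$-modules. It remains only to establish the vanishing $\Hom_{D^b(kG\mmod)}(C, C[a]) = 0$ for $a \neq 0$.

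Decomposing $C = \bigoplus_b C_b$ along the blocks of $kG$, the fact that morphisms between distinct blocks vanish reduces the problem to verifying self-orthogonality inside each block. Theorems \ref{torsion-free} and \ref{expXnd} describe $\h^\bullet(C)$ explicitly as a collection of lattices $\nabla_k(\lambda)$ for $\lambda \in \{\mu * x : x \in X'\} \cup \{(n)\}$. By Lemma \ref{nomhook} and a direct $m$-core computation, each such partition is an $m$-core---hence labels a block of defect zero---except for $\mu * (n-m)$ (when $m \le n$) and $(n)$ itself, which both have $m$-core $(n-m)$ and therefore lie in the principal $\ell$-block. For a defect-zero block, $C_b$ is a single shift of the simple projective module $S_k(\lambda)$, and so $\Hom(C_b, C_b[a]) = \Ext^a(S_k(\lambda), S_k(\lambda)) = 0$ for $a \neq 0$. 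When $m > n$ the principal block itself has defect zero, so the argument already concludes in that case.

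When $m \le n$, the principal-block summand $C_{\text{prin}}$ has exactly two non-zero cohomology groups, placed in degrees $n-m$ and $2n-d-1$ and equal respectively to $\nabla_k(\mu * (n-m))$ and $k$. The canonical truncation yields a distinguished triangle
$$\nabla_k(\mu * (n-m))[-(n-m)] \to C_{\text{prin}} \to k[-(2n-d-1)] \to,$$
and applying $\R\Hom(-, C_{\text{prin}})$ together with $\R\Hom(k, -)$ and $\R\Hom(\nabla_k(\mu * (n-m)), -)$ reduces the computation of $\Hom(C_{\text{prin}}, C_{\text{prin}}[a])$ to that of Ext groups between $k$ and $\nabla_k(\mu * (n-m))$ inside the principal block. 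The principal block is Morita-equivalent to the Brauer tree algebra associated with the tree displayed in the proof of Theorem \ref{expXnd}: a linear tree of $m$ simples with exceptional vertex at one end, with $S_1 = k$ at the opposite end and $S_j = S_k(\mu * (n-m))$ at position $j = m - n + d + 1$. Using the relation $\nabla_k(\mu * (n-m)) \simeq \Omega^{n-d+m} k$ in the stable category and the explicit uniserial description of $\Omega^i k$ for $0 \le i < m$, all the required Ext groups can be computed directly.

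The main obstacle is precisely this Brauer-tree Ext computation, where one must track the positions of $S_1$, $S_j$, and the indecomposable projectives appearing in a minimal resolution of $\Omega^{n-d+m} k$. The hypotheses $m > d$, $m > n-d+1$, and $m > 6$ serve as the non-degeneracy conditions that ensure $n - d + m < 2m - 1$, $1 < j < m$, and that the walk on the Brauer tree traced out by the resolution does not wrap around and intersect itself---which is precisely what prevents any off-diagonal Ext group from contributing to $\Hom(C_{\text{prin}}, C_{\text{prin}}[a])$ for $a \neq 0$.
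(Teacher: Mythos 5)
Your block-level reduction is sound and matches the paper's: perfectness comes from Proposition \ref{boun1}, all non-principal-block summands of $\R\Gamma_c(\X_{n,d},k)$ are single shifts of simple projective modules by \ref{torsion-free}, \ref{expXnd} and \ref{nomhook}, and cross-block Hom's vanish trivially. The disagreement is entirely in how the principal-block summand is handled, and there your argument has a real gap.

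The gap is that the truncation-triangle strategy does not reduce to showing that a list of Ext groups vanishes, because several of them do \emph{not} vanish in the relevant range. Writing $C$ for the principal summand, $s := \alpha - \beta$, $M := \nabla_k(\mu*(n-m)) \simeq \Omega^{s+1} k$ in the stable category, one finds for instance
\[
\Ext^{s+1}_{kG}(k,M) \;\simeq\; \underline{\Hom}_{kG}(k,k) \;\simeq\; k \;\neq\; 0 ,
\]
and this group appears in the long exact sequence computing $\Hom_{D^b}(k[-\alpha], C[1])$. The eventual vanishing of $\Hom(C,C[1])$ holds not because the Ext groups are zero but because the connecting map --- composition with the gluing class $g \in \Ext^{s+1}(k,M)$ defining $C$ --- is an isomorphism onto that one-dimensional group and kills it. Your proposal never mentions the connecting maps at all, and the intuition you offer in their place is false: the hypotheses do \emph{not} prevent the walk on the Brauer tree from "wrapping around and intersecting itself." The complex realizing $C$ (Proposition \ref{Rickard3}) is $P_j \to \cdots \to P_m \to P_m \to \cdots \to P_1$, in which every projective $P_i$ with $i \ge j$ occurs twice; the whole difficulty of Lemma \ref{classdiag} and Theorem \ref{partit} is precisely to classify the chain maps created by these repeated projectives and to exhibit explicit homotopies. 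What the hypothesis $m > n-d+1$ actually guarantees is that the walk does not complete a full period, i.e. $s+1 < 2m$, so that $M$ is nonprojective and the complex is genuinely of the stated shape. This is necessary but is far from forcing the Ext groups themselves to vanish.

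So the proposal is a plausible sketch of a \emph{different} route from the paper's, but as written it omits the one computation that actually matters. To turn it into a proof you would need, for each $a\neq 0$, to identify all four Ext groups $\Ext^a(k,k)$, $\Ext^{a\pm s}(\cdot,\cdot)$ between $k$ and $M$, show most of them vanish using the explicit $\Omega^i k$ in the Brauer tree, and then, in the degrees where one is nonzero (notably $a=\pm 1$), verify that the connecting map built from $g$ is surjective or injective as required. The paper avoids all of this bookkeeping by representing $C$ on the nose as a truncated minimal projective resolution of $k$ and constructing homotopies directly at the chain level.
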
	

First note that the theorem holds trivially if $m > n$, in which case $k\Gl_n(q)$ is semisimple. Therefore from now on we shall always assume $m \leq n$. Since $m> d$ this will imply $d < n$, therefore we will not consider the case of the Coxeter variety.

\smallskip

Recall from \ref{Xnd} that the eigenvalues of $F$ on $\h^{\bullet}_c(\X_{n,d},k)$ are of the form  $q^{n-d-1}$, $q^{n-d}$,$\ldots$, $q^{n-2}$ and $q^{2n-d-1}$ (note that the eigenvalue $q^{2n-2d-1}$ does not occur). Given $i \in \{0,\ldots, m-1\}$ we denote by $C_i:=\R \Gamma_{c}(\X_{n,d},k)_{q^{i}}$ the generalized $q^i$-eigenspace of $F$ on the cohomology complex of $\X_{n,d}$. 
 Then we have $$\R \Gamma_{c}(\X_{n,d},k) \simeq \bigoplus_{i=0}^{m-1} C_{i}. $$
Note that each $C_i$ is a perfect complex.
\smallskip

By \ref{torsion-free}, if $i \not\equiv 2n-d-1 (\mathrm{mod}\ m)$, then the cohomology of $C_i$ is concentrated in one degree and it isomorphic to $\nabla_{k}(\lambda_i)$ for some partition $\lambda_i $ of $n$. It follows from \ref{nomhook} that $\nabla_{k}(\lambda_i)$  is simple and projective. Therefore $C_i \simeq \nabla_{k}(\lambda_i)[d_i]$ in $K^b(kG\mproj)$.  We deduce that for all  such $i \neq j$ and all integer $a$ we have $$ 	\Hom_{D^b(kG\mmod)}(C_i,C_j[a])=\Ext_{kG}^{a+d_j-d_i}(\nabla_k(\lambda_i),\nabla_k(\lambda_j))=0$$
since $\nabla_{k}(\lambda_i)$ and  $\nabla_k(\lambda_j)$ are non-isomorphic projective simple modules.
\smallskip

The direct summand  corresponding to $q^{2n-d-1}$ is more complicated. We denote it by $D:=\R\Gamma_{c}(\X_{n,d},k)_{q^{2n-d-1}}$. It has two non-zero cohomology groups, one in top degree with the trivial representation $\nabla_k(n)$ and another which corresponds to the eigenvalue  $q^{2n-d-1-m}$ of $F$ on $\h^{\bullet}_c(\X_{n,d},K)$. It is the direct summand of $\rgc(\X_{n,d},k)$ corresponding to the principal block. Therefore for all $a$ and all $i \not\equiv 2n-d-1 (\mathrm{mod}\ m)$ we have 
$$\Hom_{D^b(A\mmod)}(C_i,D[a])=\Hom_{D^b(A\mmod)}(D,C_i[a])=0.$$
Consequently theorem \ref{result-partial_tilting} holds if and only if $D$ is a partial-tilting complex. We are now to study this 
complex in details.
\begin{prop} \label{Rickard3}
	Assume $\ell \mid \Phi_m(q)$ where $n \geq m$,  $m>n-d +1$ and $m>d$ and $m>6$. Write the Brauer tree of the principal block of $k \Gl_n(q)$  as follows 
	\begin{center}
		\begin{pspicture}(13,0.5)
		\psset{linewidth=1pt}
		\cnode[fillstyle=solid,fillcolor=black](0,0){5pt}{A}
		\cnode(1.7,0){5pt}{B}
		\cnode(3.7,0){5pt}{C}
		\cnode(5.7,0){5pt}{D}
		\cnode(7.4,0){5pt}{E}
		\cnode(9.4,0){5pt}{F}
		\cnode(11.1,0){5pt}{G}
		\cnode(12.8,0){5pt}{H}
		\ncline[nodesep=0pt]{A}{B}\ncput*[npos=0.5]{$m$}
		\ncline[nodesep=0pt]{B}{C}\ncput*[npos=0.5]{$m-1$}
		\ncline[nodesep=0pt,linestyle=dashed]{C}{D}\naput[npos=0]{$\vphantom{\big)}$}
		\ncline[nodesep=0pt]{D}{E}\ncput*[npos=0.5]{$j$}
		\ncline[nodesep=0pt,linestyle=dashed]{E}{F}\naput[npos=0]{$\vphantom{\big)}$}
		\ncline[nodesep=0pt]{F}{G}\ncput*[npos=0.5]{$2$}
		\ncline[nodesep=0pt]{G}{H}\ncput*[npos=0.5]{$1$}
		\end{pspicture}
	\end{center}
	\vspace{0.5cm}
	where the trivial representation $\nabla_K(n)$ is the rightmost vertex in the tree.
	Then $D:=\rgc(\X_{n,d},k)_{q^{2n-d-1}}$ is isomorphic to a complex of the form
	\[
	\xymatrixcolsep{0.6cm}
	\xymatrixrowsep{0.5cm}
	\xymatrix{ 0 \ar[r] & P_{j} \ar[r] & P_{j+1} \ar[r] & P_{j+2} \ar[r] & \cdots \ar[r] & P_{m} \ar[r]& P_{m} \ar[r] & \cdots \ar[r] & P_{2} \ar[r]&  P_{1} \ar[r] & 0 }
	\]
	for some $2 \leq j \leq m$.
\end{prop}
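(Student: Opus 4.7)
The strategy is to identify $D := \rgc(\X_{n,d},k)_{q^{2n-d-1}}$ with a truncated minimal projective resolution of the trivial simple module $S_1 := \nabla_k(n) = k$. Under the assumptions on $m$, every other generalized $q^i$-eigenspace of $F$ on $\rgc(\X_{n,d},k)$ is concentrated in a single degree where the cohomology is a simple projective module $\nabla_k(\mu\ast x)$, because the relevant partition is an $m$-core by \ref{nomhook}. Hence $D$ is exactly the summand of $\rgc(\X_{n,d},k)$ lying in the principal block, the unique unipotent block with non-trivial cyclic defect. Combining \ref{torsion-free} with \ref{expXnd} yields the two non-zero cohomology groups of $D$, namely $\h^{2n-d-1}(D)\simeq S_1$ and $\h^{n-m}(D)\simeq \nabla_k(\mu\ast(n-m))$ where $\mu=(n-d)$; setting $a := (2n-d-1)-(n-m) = n-d+m-1$, \ref{omegarick} gives the stable isomorphism $L:=\nabla_k(\mu\ast(n-m))\simeq \Omega^{a+1}S_1$.

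Next I claim that $D$ is isomorphic, in $D^b(k\Gl_n(q)\mmod)$, to the truncated minimal projective resolution $C:=\tau_{\geq -a}P^{\bullet}$ of $S_1$, where $P^{\bullet}\twoheadrightarrow S_1$ is minimal. Since $D$ has only two non-zero cohomology groups separated by $a$ degrees, it fits into a distinguished triangle whose connecting morphism $\partial_D$ lives in $\Ext^{a+1}_{kG}(S_1,L)$. By \ref{stabb}, this Ext group is isomorphic to $\Hom_{kG\stab}(S_1,S_1)$, which is one-dimensional over $k$ since $S_1=k$ is simple and non-projective (the latter because $\ell\mid|\Gl_n(q)|$). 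If $\partial_D$ vanished, $D$ would split as $L[-(n-m)]\oplus S_1[-(2n-d-1)]$, making $S_1=k$ a direct summand of the perfect complex $D$ and hence perfect itself -- a contradiction, since $k$ has infinite projective dimension over $k\Gl_n(q)$. Therefore $\partial_D\neq 0$, and since $\Ext^{a+1}_{kG}(S_1,L)$ is one-dimensional, the triangle for $D$ is isomorphic to the analogous triangle for $C$; in particular $D\simeq C$ in $D^b(k\Gl_n(q)\mmod)$.

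It remains to describe $C$ explicitly using the line-shape of the Brauer tree. For $2\le i\le m-1$, the indecomposable projective $P_i$ has Loewy structure $\begin{pmatrix} S_i \\ S_{i-1}\oplus S_{i+1} \\ S_i \end{pmatrix}$; moreover $P_1=\begin{pmatrix} S_1\\ S_2\\ S_1 \end{pmatrix}$, and $P_m$ has head and socle equal to $S_m$ with only $S_{m-1}$ appearing among its middle composition factors. A direct induction on $i$, computing projective covers of syzygies, yields $\Omega^iS_1\simeq \begin{pmatrix} S_{i+1}\\ S_i\end{pmatrix}$ for $1\le i\le m-1$ and $\Omega^{m+k}S_1\simeq \begin{pmatrix} S_{m-k}\\ S_{m-k+1}\end{pmatrix}$ for $1\le k\le m-2$. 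Consequently $P^{-i}=P_{i+1}$ for $0\le i\le m-1$, $P^{-m}=P_m$, and $P^{-(m+k)}=P_{m-k}$. The hypotheses $m>d$ and $m>n-d+1$ force $1\le n-d\le m-2$, hence $a\in\{m,\dots,2m-3\}$; truncating at $-a$ yields precisely the complex in the proposition with $j=2m-a=m-n+d+1$. The key technical obstacle is the rigidity step in the middle paragraph, ruling out a direct-sum decomposition of $D$ by invoking the non-perfectness of the trivial module; the rest is a routine walk on the Brauer tree.
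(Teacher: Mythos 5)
Your proof follows the same overall strategy as the paper's: realize $D$ as the principal-block summand, compute its two non-vanishing cohomology groups via \ref{torsion-free} and \ref{expXnd}, use \ref{omegarick} to pin down the lower cohomology group as a Heller translate of $k$, and identify $D$ with a truncated minimal projective resolution of $k$ by showing that the connecting morphism of the defining triangle lies in a one-dimensional $\Hom$-space. Two points deserve comment.

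First, you make explicit something the paper leaves implicit. The paper argues that the one-dimensionality of $\Hom_{D^b}(k[-\alpha],\h^\beta(D)[-\beta+1])\simeq \Ext^0_{kG}(k,k)$ pins down $f$ up to scalar, and then concludes $D\simeq E[-\alpha]$. Strictly, this requires ruling out $f=0$: if $f=0$ then $D$ splits as $k[-\alpha]\oplus\h^\beta(D)[-\beta]$, forcing $k$ to be a direct summand of a perfect complex, which is absurd since $k$ has infinite projective dimension over $kG$ (the principal block has non-trivial defect here since $m\leq n$). You supply exactly this argument; the paper only implies it.

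Second, and more substantively, your value $j=m-n+d+1$ is correct, whereas the paper states $j=m-n+d$ at the end of its proof — an off-by-one slip. The complex $E$ spans degrees $-(2m-j),\dots,0$, so its bottom cohomology is $\Omega^{2m-j+1}k$, while $\h^\beta(D)\simeq\Omega^{\alpha-\beta+1}k=\Omega^{n-d+m}k$; matching these gives $2m-j+1=n-d+m$, i.e.\ $j=m-n+d+1$, which is what you derive by computing which indecomposable projective sits in degree $-(\alpha-\beta)$ of the minimal resolution. Your formula is also the one consistent with the paper's worked example $\X_{5,4}$, $m=5$, where the displayed complex starts at $P(1^5)=P_m$, i.e.\ $j=5=m-n+d+1$, not $j=4=m-n+d$. (Note also that with your $j$, the hypothesis $m>n-d+1$ gives $j\geq 3$, comfortably inside the claimed range $2\leq j\leq m$, so the statement of the proposition is unaffected.) A small notational glitch in your writeup: you set $D=\R\Gamma_c(\X_{n,d},k)_{q^{2n-d-1}}$ but then quote degrees $n-m$ and $2n-d-1$, which belong to the shifted complex $C_{n,d}^k$; the unshifted degrees are $\beta=3n-d-1-m$ and $\alpha=4n-2d-2$. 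This does not affect the argument since only the gap $\alpha-\beta$ matters.
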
	
\begin{proof}
	Since the Brauer tree of the block of $\Gl_n(q)$ is a straight line then there is a unique path  from the  vertex $\nabla_K(n)$  to any other vertices. By walking around the Brauer tree from that vertex, one can construct a minimal projective resolution of the trivial module $k=\nabla_k(n)$, and truncate it to obtain the following complex  
	\[
	\xymatrixcolsep{0.6cm}
	\xymatrixrowsep{0.5cm}
	E:= \xymatrix{ 0 \ar[r] & P_{j} \ar[r] & P_{j+1} \ar[r] & P_{j+2} \ar[r] & \cdots \ar[r] & P_{m} \ar[r]& P_{m} \ar[r] & \cdots \ar[r] & P_{2} \ar[r]&  P_{1} \ar[r] & 0 }
	\]
	for any $1 \leq j \leq m$.
	The cohomology of this complex equals $k$ in the top degree and $\Omega^{-j}k$ in the bottom degree, therefore we have a distinguished triangle
	\begin{equation}\label{eq:E}
	\xymatrix{ E \ar[r] &  k[0] \ar[r] & \Omega^{-j}k [2m-j] \ar@{~>}[r] &  }.
	\end{equation}
	in $D^b(kG\mmod)$.
	\smallskip

	We have already seen that $D$ is a perfect complex. By the explanation before \ref{Rickard3}  the cohomology of $D$  vanishes outside two degrees. One is the top degree, equal to $\alpha := 4n-2d-2$, where the trivial representation occurs. The other one is the degree in which the eigenvalue $q^{2n-d-1-m}$ occurs. Writing $2n-d-1-m$ as $(n-d-1)+(n-m)$ we see that this degree equals to $\beta:=2n-d-1+m-n=3n-d-1-m$ and that the corresponding cohomology group is 
	$$  \h^\beta(D) \simeq \nabla_k(\mu*(n-m))=\nabla_k(n-d,n-m+1,1^{d-n+m-1}).$$
	We have a distinguished triangle 
	\[
	\xymatrix{ D \ar[r] &  k[- \alpha] \ar[r]^{\hspace{-0.6cm}f} & \h^{\beta}(D)[-\beta +1] \ar@{~>}[r] &  }.
	\] 
	Then $f \in \Hom_{D^b(kG\mmod)}(k[- \alpha],\h^{\beta}(D)[-\beta +1])$. By  the explanation in the proof of \ref{expXnd}, we must have $\h^{\beta}(D) \simeq \Omega^{\alpha - \beta +1} k$ since $D$ is a perfect complex.  
	We have
	$$\begin{aligned}
	\Hom_{D^b(kG\mmod)}(k[- \alpha],\h^{\beta}(D)[-\beta +1]) & \, \simeq \Hom_{D^b(kG\mmod)}(k[- \alpha],\Omega^{\alpha - \beta +1} k[-\beta +1])
	\\ &\, \simeq \Ext^{\alpha - \beta +1}_{kG}(k,\Omega^{\alpha - \beta +1}k) \\
	&\, \simeq \Ext^{0}_{kG}(k,k).
	\end{aligned}	$$
	Since $\Ext^{0}_{kG}(k,k) \simeq \Hom_{kG\mmod}(k,k) \simeq k$, we deduce that $f$ is unique up to a scalar. Now $\alpha-\beta+1 = n-d+m$. If we set $j := m-n+d$ then  $\alpha-\beta+1  = 2m-j$ and it follows from \eqref{eq:E} and the previous discussion that $D$ is isomorphic to $E[-\alpha]$. Note that $j>1$ since $ m > n-d +1$.
\end{proof}
\begin{rem}
The hypothesis $m>6$ can be relaxed in theorem \ref{torsion-free} for specific values of $n$ and $d$. The only restriction is to avoid hitting $n=6$ and $d=4$ in the induction process, because lemma \ref{simproj}  does not work in this case. For instance, if $m=5$ and  $(n,d) = (5,4)$ the cohomology is torsion free. We will use it in the next example. 
\end{rem}
\begin{eg}
	Assume  $\ell \mid \Phi_5(q)$. For the variety $\X_{5,4}$. From \ref{expXnd} we can  obtain the following table
	$$
	\begin{array}{c|ccccccc} i   & 5 & 6 & 7 & 8&9 &10
	\\\hline \\[0.08cm]
	\h^i_c(\X_{5,4},k) & \begin{pmatrix}  S_{k}(21^3) \\ S_{k}(1^5)   \end{pmatrix} \langle 1 \rangle & S_{k}(2^21)\langle q^2\rangle & S_{k} (32) \langle q^3 \rangle & 0  & 0 & S_{k}(5) \langle q^5\rangle \\[0.3cm]
	\end{array}
	$$
	For simplicity, we remove the subscript  $k$ from the modules. The Brauer tree of the principal block of $\Gl_5(q)$ is a straight line and  the cohomology complex, cut by the eigenvalue $1 \equiv q^5$, is given by 
	\[
	\xymatrixcolsep{0.7cm}
	\xymatrixrowsep{0.7cm}
	D \simeq	\xymatrix{ 0 \ar[r] & P(1^5)  \ar[r] & P(1^5) \ar[r] & P(21^3) \ar[r] & P(31^2) \ar[r] & P(41) \ar[r] & P(5) \ar[r] & 0 }
	\]
	with
	\begin{center}
		\begin{small}
			$P(1^5):= \begin{pmatrix}
			& S(1^5) &        \\
			S(1^5) &          &   S(21^3)  \\
			S(1^5) &          &       \\
			S(1^5) &         &    \\
			.          &         &     \\
			.         &         &     \\
			.       &          &     \\
			& S(1^5) & \\
			\end{pmatrix}$,
			$P(21^3):=\begin{pmatrix} 
			& S(21^3) &        \\
			S(1^5) &          &   S(31^2)  \\
			&   S(21^3)  &       \\
			\end{pmatrix}$, 
			\vspace{0.8cm}
			
			$P(31^2):=\begin{pmatrix} 
			& S(31^2) &        \\
			S(21^3) &          &   S(41)  \\
			&   S(31^2)  &       \\
			\end{pmatrix}$,
			$P(41):=\begin{pmatrix} 
			& S(41) &        \\
			S(31^2) &          &   S(1^5)  \\
			&   S(41)  &       \\
			\end{pmatrix}$,
			$P(5):=\begin{pmatrix} 
			S(5)        \\
			S(41)  \\
			S(5)      \\
			\end{pmatrix}.$
		\end{small}
	\end{center}
	For the eigenvalues $q^2$ and $q^3$ the cohomology complexes are concentrated in one degree. Those degrees are simple and projective modules of the form
	\begin{align*}
	&&	C_2:&  0 \longrightarrow 0  \longrightarrow \cdots \longrightarrow S(2^21)  \longrightarrow 0 \longrightarrow  0, && \\ \text{and}
	&& 
	C_3: & 0 \longrightarrow 0  \longrightarrow 0 \longrightarrow \cdots   \longrightarrow  S(32) \longrightarrow 0. 
	\end{align*} 
	Therefore the cohomology complex of $\X_{5,4}$ is 
	$$\rgc(\X_{n,d},k) \, \simeq  \displaystyle D \oplus C_2 \oplus C_3 $$
	which is given by
	\[
	\xymatrixcolsep{0.4cm}
	\xymatrixrowsep{0.5cm}
	\xymatrix{ 0  \ar[r] & P(1^5)  \ar[r] & P(1^5) \oplus P(2^21) \ar[r] & P(21^3)\oplus P(32) \ar[r] & P(31^2) \ar[r] & P(41) \ar[r] & P(5) \ar[r] & 0}.
	\]
\end{eg}	

In the next lemma, we will show that for different integers $a$, we have three classes of diagrams when we compute $\Hom_{C^{b}(k G\mmod)}(E,E[a])$ where 
\[
\xymatrixcolsep{0.6cm}
\xymatrixrowsep{0.5cm}
E:= \xymatrix{ 0 \ar[r] & P_{j} \ar[r] & P_{j+1} \ar[r] & P_{j+2} \ar[r] & \cdots \ar[r] & P_{m} \ar[r]& P_{m} \ar[r] & \cdots \ar[r] & P_{2} \ar[r]&  P_{1} \ar[r] & 0 }
\]
is the complex obtained by a truncated minimal projective resolution of $k$ as in the proof of \ref{Rickard3}.
\begin{lemma} \label{classdiag}
	Let $a$ be an integer. Assume that $|a|>1$. Then the non-zero maps between $E$  and $E[a]$ in $C^{b}(kG\mmod)$ are given by one of the following diagrams: 
	\begin{enumerate}
		\item  
		\[
		\xymatrix{ 
			\cdots \ar[r] &	P_{i-\epsilon} \ar[r] \ar[d]^{0} & P_{i} \ar[r] \ar[d]^{\neq 0}   & P_{i+\epsilon} \ar[d]^{0} \ar[r]& \cdots \\
			\cdots \ar[r]& P_{i+\epsilon} \ar[r] & P_{i} \ar[r] & P_{i-\epsilon} \ar[r]& \cdots}
		\]	
		for some $i \neq m$ and $\epsilon=\pm 1$, and 
		
		\item
		\[
		\xymatrix{ 
			\cdots \ar[r] \ar[d]^{0} & P_{i-\epsilon} \ar[r] \ar[d]^{f_1}  & P_{i} \ar[r] \ar[d]^{f_2}  & \cdots \ar[d]^{0} \\
			\cdots \ar[r] & P_{i} \ar[r] & P_{i -\epsilon} \ar[r] & \cdots 
		}
		\]	
		for some $i$  and $\epsilon= \pm 1$. 
	\end{enumerate}
\end{lemma}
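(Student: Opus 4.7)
The plan is to proceed by a direct component-by-component analysis of a putative chain map $\phi : E \to E[a]$, exploiting two structural facts. First, because the Brauer tree of the principal block is a straight line, $\Hom_{kG}(P_s, P_t) = 0$ whenever $|s-t| > 1$, since this Hom-space has dimension equal to the Cartan invariant $c_{s,t}$ of the tree. Second, $E$ is a truncated minimal projective resolution, so each differential of $E$ is (a scalar multiple of) the unique non-iso map between adjacent projectives along the walk. Writing the terms of $E$ as $P_{\ell(0)},\ldots,P_{\ell(2m-j)}$, the label function $\ell$ ascends from $j$ to $m$, takes the value $m$ at the two consecutive positions $m-j$ and $m-j+1$ (the exceptional loop), and then descends back to $1$. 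In particular, a component $\phi^i : P_{\ell(i)} \to P_{\ell(i+a)}$ can be non-zero only when $|\ell(i+a) - \ell(i)| \leq 1$.

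The next step is to enumerate, as a function of $a$ with $|a|>1$, those positions $i$ at which this inequality can hold. If $i$ and $i+a$ both lie in the ascending (resp.\ descending) half of the walk, then $|\ell(i+a)-\ell(i)| = |a| > 1$, impossible; hence one endpoint must lie in the ascending half and the other in the descending half. A short calculation then shows that when $a = \pm(2m-2k+1)$ is odd there is exactly one admissible position, at which $\ell(i) = \ell(i+a) = k$ with necessarily $k \neq m$, whereas when $a$ is even there is a pair of consecutive admissible positions with labels differing by one. In the odd case, the immediate neighbors of $i$ have source and target labels differing by $2$, so their Hom spaces vanish; the chain-map condition then reads $d^{i+a} \circ \phi^i = 0 = \phi^i \circ d^{i-1}$. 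Decomposing $\phi^i$ in the two-dimensional $\End_{kG}(P_k) = \langle \mathrm{id}_{P_k},\, \sigma_k \rangle$, where $\sigma_k$ is the socle-factoring endomorphism, the identity fails these conditions whereas $\sigma_k$ satisfies them, so $\phi^i$ is a scalar multiple of $\sigma_k$, producing type (i). In the even case, the two admissible components lie in one-dimensional Hom-spaces generated by the non-iso maps $P_{i-\epsilon} \to P_i$ and $P_i \to P_{i-\epsilon}$, and the single remaining commutativity square between them determines the pair up to a common scalar, producing type (ii).

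The step I expect to be the hardest is handling the exceptional vertex: $\End_{kG}(P_m)$ has dimension $e+1 > 2$, so the two-dimensional argument used for type (i) does not apply verbatim when $k = m$, and the flat piece of the walk at label $m$ requires separate bookkeeping. The rescue is that the label $m$ occurs only at the two consecutive positions $m-j$ and $m-j+1$, so the only shift producing a same-label-$m$ component is $a = \pm 1$, which is excluded by hypothesis; and any chain map whose support meets these two positions reduces, via the known form of the exceptional differential $P_m \to P_m$ together with the non-iso maps $P_{m-1} \leftrightarrow P_m$, to one of the two stated diagrams. Once this routine check is carried out, the classification of non-zero chain maps $E \to E[a]$ for $|a|>1$ is complete.
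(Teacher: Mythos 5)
Your proposal is correct and takes a genuinely different route from the paper's. Both proofs rest on the same two facts — $\Hom_{kG}(P_s,P_t)=0$ whenever $|s-t|>1$ because the Brauer tree is a line, and each differential of $E$ is the unique (up to scalar) non-iso map between adjacent projectives — but the organization is different. The paper splits into three positional cases (components supported to the left of the $P_m\to P_m$ piece, to the right of it, or straddling it) and enumerates the admissible target labels $(\alpha,\beta,\gamma)$ in each. You instead encode the complex by a label function $\ell$ that rises from $j$ to $m$, stays at $m$ for two positions, and falls to $1$, observe that a non-zero component forces one endpoint in the ascending half and the other in the descending half, and then read off directly from the parity of $a$ whether the admissible positions carry equal labels (type~(i), $a$ odd) or labels differing by one (type~(ii), $a$ even). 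This is cleaner: it handles all three of the paper's cases uniformly, and it immediately gives the constraint $i\neq m$ in type~(i), since a same-label-$m$ component could only arise from $a=\pm1$. Your observation that the commutativity square in type~(ii) forces $f_1$ and $f_2$ to carry the same scalar (because the composite $P_{i-\epsilon}\to P_i\to P_{i-\epsilon}$ of the up and down differentials is the non-zero socle-factoring endomorphism) is also a slight sharpening of the paper's analysis: it shows that the paper's third sub-case of Case~I, with $f_1=0$ but $f_2\neq0$, cannot actually occur except in the degenerate situation where the term $P_{i-\epsilon}$ falls outside the support of $E$. Finally, you flag the exceptional vertex as needing "separate bookkeeping," but in fact your own parity argument already disposes of it: since $\ell$ takes the value $m$ only at two consecutive positions, a type~(i) component with label $m$ would need $a=\pm1$, and in type~(ii) the relevant Hom-spaces $\Hom(P_{m-1},P_m)$ and $\Hom(P_m,P_{m-1})$ remain one-dimensional regardless of the exceptional multiplicity, so no extra work is required there.
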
	
\begin{proof}
	Assume that $\ell \mid \Phi_m(q)$. Then the Brauer
	tree of the principal $\ell$-block $\Gl_n(q)$ is of the form
	\begin{center}
		\begin{pspicture}(13,0.5)
		\psset{linewidth=1pt}
		\cnode[fillstyle=solid,fillcolor=black](0,0){5pt}{A}
		\cnode(1.7,0){5pt}{B}
		\cnode(3.7,0){5pt}{C}
		\cnode(5.7,0){5pt}{D}
		\cnode(7.4,0){5pt}{E}
		\cnode(9.4,0){5pt}{F}
		\cnode(11.1,0){5pt}{G}
		\cnode(12.8,0){5pt}{H}
		\ncline[nodesep=0pt]{A}{B}\ncput*[npos=0.5]{$m$}
		\ncline[nodesep=0pt]{B}{C}\ncput*[npos=0.5]{$m-1$}
		\ncline[nodesep=0pt,linestyle=dashed]{C}{D}\naput[npos=0]{$\vphantom{\big)}$}
		\ncline[nodesep=0pt]{D}{E}\ncput*[npos=0.5]{$j$}
		\ncline[nodesep=0pt,linestyle=dashed]{E}{F}\naput[npos=0]{$\vphantom{\big)}$}
		\ncline[nodesep=0pt]{F}{G}\ncput*[npos=0.5]{$2$}
		\ncline[nodesep=0pt]{G}{H}\ncput*[npos=0.5]{$1$}
		\end{pspicture}
	\end{center}
	\vspace{0.5cm}
	Recall that $\Hom_{kG\mmod}(P_t,P_s)=0$ whenever $|s-t|>1$.
	The complex $E$ is given by 
	\[
	\xymatrixcolsep{0.3cm}
	\xymatrixrowsep{0.3cm}
	\xymatrix{ E = 0 \ar[r] & P_j \ar[r] & P_{j+1} \ar[r] & \cdots \ar[r] & P_{m-1} \ar[r] & P_{m} \ar[r] & P_m \ar[r] & P_{m-1} \ar[r] & \cdots \ar[r] & P_j \ar[r] & \cdots \ar[r] & P_2 \ar[r] & \ar[r] P_1 \ar[r] & 0 }.
	\]
	As we mentioned before the index $j$ depends on the characteristic $\ell$.
	\\
	
	Case $\mathbf{I}$: We first look at the left-hand side of $(P_m \longrightarrow P_m)$ in the complex $E$. We can write it as follows where  $i,i+1 < m$
	\[
	\xymatrix{ \cdots \ar[r] & P_{i-1} \ar[r] & P_i \ar[r] & P_{i+1} \ar[r] & P_{i+2} \ar[r] & \cdots \ar[r] & P_m \ar[r] & P_m \ar[r] & \cdots}.
	\]
	Let $f$ be a morphism between $E$ and $E[a]$ which we draw in the following diagram
	\[
	\xymatrix{E:  \cdots \ar[r] & P_{i-1} \ar[r] \ar[d]^{f_{i-1}} & P_i \ar[r] \ar[d]^{f_{i}} & P_{i+1} \ar[r] \ar[d]^{f_{i+1}} & P_{i+2} \ar[r] \ar[d]^{f_{i+2}} & \cdots \\
		E[a]: \cdots \ar[r] & P_{\alpha} \ar[r] & P_{\beta} \ar[r] & P_{\gamma} \ar[r] &  P_{\delta} \ar[r] & \cdots}
	\]
	Assume that $f_i \neq 0 $ and that $f_{i'}=0$ for all $i' < i$. We have  $\Hom_{kG\mmod}(P_i,P_{\beta}) \neq 0$ if and only if $\beta =i, i+1, i-1$ and it gives us $(\alpha,\beta,\gamma)$ as one of the following form
	\begin{center}
		$\{(i+1,i,i-1), (i-1,i,i+1), (i, i+1, i+2), (i+2,i+1,i), (i,i-1,i-2), (i-2,i-1,i)  \}$.
	\end{center}
	Since $|a| >1$ then only the following cases can actually occur 
	$$\{(i+1,i,i-1), (i+2,i+1,i), (i,i-1,i-2) \}.$$
	$\bullet$ Let $(\alpha,\beta,\gamma)=(i+1,i,i-1)$ then, we obtain the following diagram and only one non-zero map:
	\[
	\xymatrix{ \cdots \ar[r] & P_{i-1} \ar[r] \ar[d]^{0} & P_i \ar[r] \ar[d]^{\neq 0} & P_{i+1} \ar[r] \ar[d]^{0} & P_{i+2} \ar[r] \ar[d]^{0} & \cdots \\
		\cdots \ar[r] & P_{i+1} \ar[r] & P_{i} \ar[r] & P_{i-1} \ar[r] &  P_{i-2} \ar[r] & \cdots}
	\]
	It corresponds to the map (i) with $\epsilon = 1$.
	\\
	$\bullet$ If $(\alpha,\beta,\gamma)=(i+2,i+1,i)$ then we have  two non zero maps.
	\[
	\xymatrix{ \cdots \ar[r] & P_{i-1} \ar[r] \ar[d]^{0} & P_i \ar[r] \ar[d]^{\neq 0} & P_{i+1} \ar[r] \ar[d]^{\neq 0} & P_{i+2} \ar[r] \ar[d]^{0} & \cdots \\
		\cdots \ar[r] & P_{i+2} \ar[r] & P_{i+1} \ar[r] & P_{i} \ar[r] &  P_{i-1} \ar[r] & \cdots}
	\]
	It corresponds to the maps (ii) with $\epsilon = 1$.
	\\
	$\bullet$ If $(\alpha,\beta,\gamma)=(i,i-1,i-2)$ then similarly we obtain one non-zero map between the complexes.
	\[
	\xymatrix{ \cdots \ar[r] & P_{i-1} \ar[r] \ar[d]^{0} & P_i \ar[r] \ar[d]^{\neq 0} & P_{i+1} \ar[r] \ar[d]^{0} & P_{i+2} \ar[r] \ar[d]^{0} & \cdots \\
		\cdots \ar[r] & P_{i} \ar[r] & P_{i-1} \ar[r] & P_{i-2} \ar[r] &  P_{i-3} \ar[r] & \cdots}
	\]
	It corresponds to the maps (ii) with $\epsilon = 1$ with $f_1$ being zero.
	\smallskip

	Case $\mathbf{II}$: We consider the right hand side of  $(P_m \longrightarrow P_m)$ in the complex $E$. We present this complex as follows such that $i,i+1 <m$
	\[
	\xymatrix{\cdots \ar[r] & P_m \ar[r] & P_m \ar[r] & \cdots \ar[r] & P_{i+2} \ar[r] & P_{i+1} \ar[r] & P_{i} \ar[r] & P_{i-1} \ar[r] & \cdots }.
	\]
	Let $g$ be a morphism between $E$ and $E[a]$ which we draw in the following diagram
	\[
	\xymatrix{E:  \cdots \ar[r] & P_{i+2} \ar[r] \ar[d]^{g_{i+2}} & P_{i+1} \ar[r] \ar[d]^{g_{i+1}} & P_{i} \ar[r] \ar[d]^{g_{i}} & P_{i-1} \ar[r] \ar[d]^{g_{i-1}} & \cdots \\
		E[a]: \cdots \ar[r] & P_{\alpha} \ar[r] & P_{\beta} \ar[r] & P_{\gamma} \ar[r] &  P_{\delta} \ar[r] & \cdots}
	\]
	Assume that $g_i \neq 0 $ and that $g_{i'}=0$ for all $i' < i$.
	Similar to the case $\mathbf{I}$, $\Hom_{(kG\mmod)}(P_i,P_{\gamma})\neq 0$ if and only if $\beta =i, i+1, i-1$. Hence, we can obtain $(\beta,\gamma,\delta)$ as follows
	\begin{center}
		$\{ (i-1,i,i+1), (i,i+1,i+2), (i-2,i-1,i) \}$.
	\end{center}	
	The argument in this case is entirely similar to the previous case and we obtain the diagrams with one or two non-zero maps between two complexes. The only difference is that $\epsilon = -1$.
	\smallskip
	
	Case $\mathbf{III}$: We  are left with  maps of the form 
	\[
	\xymatrix{ \cdots \ar[d]^{0} \ar[r] & P_{m-1} \ar[r] \ar[d] & P_m \ar[r] \ar[d] & P_{m} \ar[r] \ar[d] & P_{m-1} \ar[r] \ar[d]  & \cdots  \ar[d]^{0}\\
		\cdots \ar[r] & P_{\alpha} \ar[r] & P_{\beta} \ar[r] & P_{\gamma} \ar[r] &  P_{\delta} \ar[r] & \cdots}
	\]
	Since $|a|>1$ then both $\beta$ and $\gamma$ are different from $m$.
	In this case, we have the following  possibilities for $(\alpha,\beta,\gamma,\delta)$.
	\begin{align*}
	\{ (m,m-1,m-2,m-3), (m-1,m-2,m-3,m-4), (m-2,m-3,m-4,m-5), \\ \phantom{\{ )} (m-3,m-2,m-1,m), (m-4,m-3,m-2,m-1),  (m-5,m-4,m-3,m-2) \}.
	\end{align*}	
	By symmetry we only look at the first three.
	
	\noindent $\bullet$ If $(\alpha,\beta,\gamma,\delta)=(m,m-1,m-2,m-3)$ then we obtain at most two non-zero maps
	\[
	\xymatrix{ \cdots \ar[d]^{0} \ar[r] & P_{m-1} \ar[r]^{\neq 0} \ar[d] & P_m \ar[r]^{\neq 0} \ar[d] & P_{m} \ar[r] \ar[d]^{0} & P_{m-1} \ar[r] \ar[d]^{0}  & \cdots  \ar[d]^{0}\\
		\cdots \ar[r] & P_{m} \ar[r] & P_{m-1} \ar[r] & P_{m-2} \ar[r] &  P_{m-3} \ar[r] & \cdots}
	\]
	which correspond to (ii).
	
	\noindent $\bullet$ Let $(\alpha,\beta,\gamma,\delta)=(m-1,m-2,m-3,m-4)$.  There is at most one non-zero map
	\[
	\xymatrix{ \cdots \ar[d]^{0} \ar[r] & P_{m-1} \ar[r]^{\neq 0} \ar[d] & P_m \ar[r] \ar[d]^{0} & P_{m} \ar[r] \ar[d]^{0} & P_{m-1} \ar[r] \ar[d]^{0}  & \cdots  \ar[d]^{0}\\
		\cdots \ar[r] & P_{m-1} \ar[r] & P_{m-2} \ar[r] & P_{m-3} \ar[r] &  P_{m-4} \ar[r] & \cdots}
	\]
	which corresponds to (i).
	\\
	$\bullet$ If $(\alpha,\beta,\gamma,\delta)=(m-2,m-3,m-4,m-5)$ there is at most one non-zero map 
	\[
	\xymatrix{ \cdots \ar[d]^{0} \ar[r] & P_{m-1} \ar[r]^{\neq 0} \ar[d] & P_m \ar[r] \ar[d]^{0} & P_{m} \ar[r] \ar[d]^{0} & P_{m-1} \ar[r] \ar[d]^{0}  & \cdots  \ar[d]^{0}\\
		\cdots \ar[r] & P_{m-2} \ar[r] & P_{m-3} \ar[r] & P_{m-4} \ar[r] &  P_{m-5} \ar[r] & \cdots}
	\]
	which corresponds to (ii).
\end{proof}	

\begin{thm} \label{partit}
	Assume $l \mid \Phi_m(q)$ where $n \geq m>n/2$ and $m>6$. Write the Brauer tree of the principal $\ell$-block of $ \Gl_n(q)$  as follows 
	\begin{center}
		\begin{pspicture}(13,0.5)
		\psset{linewidth=1pt}
		\cnode[fillstyle=solid,fillcolor=black](0,0){5pt}{A}
		\cnode(1.7,0){5pt}{B}
		\cnode(3.7,0){5pt}{C}
		\cnode(5.7,0){5pt}{D}
		\cnode(7.4,0){5pt}{E}
		\cnode(9.4,0){5pt}{F}
		\cnode(11.1,0){5pt}{G}
		\cnode(12.8,0){5pt}{H}
		\ncline[nodesep=0pt]{A}{B}\ncput*[npos=0.5]{$m$}
		\ncline[nodesep=0pt]{B}{C}\ncput*[npos=0.5]{$m-1$}
		\ncline[nodesep=0pt,linestyle=dashed]{C}{D}\naput[npos=0]{$\vphantom{\big)}$}
		\ncline[nodesep=0pt]{D}{E}\ncput*[npos=0.5]{$j$}
		\ncline[nodesep=0pt,linestyle=dashed]{E}{F}\naput[npos=0]{$\vphantom{\big)}$}
		\ncline[nodesep=0pt]{F}{G}\ncput*[npos=0.5]{$2$}
		\ncline[nodesep=0pt]{G}{H}\ncput*[npos=0.5]{$1$}
		\end{pspicture}
	\end{center}
	\vspace{0.5cm}
	Then for all $2 \leq j \leq n$ the complex
	\[
	\xymatrixcolsep{0.6cm}
	\xymatrixrowsep{0.5cm}
	\xymatrix{ E = 0 \ar[r] & P_{j} \ar[r] & P_{j+1} \ar[r] & P_{j+2} \ar[r] & \cdots \ar[r] & P_{m} \ar[r]& P_{m} \ar[r] & \cdots \ar[r] & P_{2} \ar[r]&  P_{1} \ar[r] & 0 }
	\]
	is a partial-tilting  complex.
\end{thm}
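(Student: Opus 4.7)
The plan is to verify the two conditions in Definition \ref{defpartial}. Perfectness is immediate because $E$ is a bounded complex of finitely generated projective $kG$-modules. For the self-orthogonality condition, I would work in $K^b(kG\mproj)$, using that for perfect complexes one has $\Hom_{D^b(kG\mmod)}(E,E[a]) \simeq \Hom_{K^b(kG\mproj)}(E,E[a])$, so it suffices to show that every chain map $f \colon E \to E[a]$ is null-homotopic for $a \neq 0$.

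My strategy is to split the analysis into two regimes: $|a| \geq 2$, where Lemma \ref{classdiag} is available, and $|a| = 1$, which needs a separate (but parallel) diagrammatic argument. In the $|a| \geq 2$ case, Lemma \ref{classdiag} reduces the problem to two classes of non-zero chain maps. For the class (i) diagram, the middle non-zero map $f_i \colon P_i \to P_i$ has zero neighbours on both sides, so I would construct a homotopy consisting of a single map $h \colon P_{i+\epsilon}^{\text{upper}} \to P_i^{\text{lower}}$, using the fact that $\dim_k\Hom_{kG}(P_s,P_t) = 1$ when $|s-t| \leq 1$ and zero otherwise. The relation $f_i = d\circ h + h \circ d$ is then checked directly from the Loewy structure of $P_i$ (top $S_i$, heart $S_{i-1}\oplus S_{i+1}$, socle $S_i$) and the fact that the differentials in $E$ realise the walk around the Brauer tree. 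The class (ii) diagram is handled similarly with two homotopy components in adjacent positions; compatibility again follows from the one-dimensionality of $\Hom$-spaces along the tree.

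For $|a| = 1$, I would enumerate the analogues of the diagrams in Lemma \ref{classdiag} (the exclusion $|a|>1$ there was only to rule out the trivial overlap giving an identity on $E$). Each new pattern of non-zero components is again a chain of maps $P_i \to P_i$ or $P_i \to P_{i\pm 1}$ sitting between zero components, and the same explicit homotopies from the Brauer tree dispose of them. The numerical fact that $E$ has length $2m-j+1$ ensures that the relevant shifted positions of $E[\pm 1]$ still lie inside $E$'s range, so the diagrammatic arguments remain uniform.

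The main obstacle will be the bookkeeping at the ``fold'' of $E$, namely the double vertex $P_m \to P_m$ where the differential is the composition $P_m \twoheadrightarrow \Omega S_m \hookrightarrow P_m$. Near this position the standard one-sided homotopy constructions have to be adjusted because both source and target of the differential are isomorphic and the unique non-zero endomorphism of $P_m$ up to scalar is exactly this fold map; I would treat this case by hand, showing that any chain map whose support meets the fold either lifts through the fold map itself (giving a homotopy) or vanishes on cohomology in a way that forces the whole map to factor through a contractible subcomplex. Once the fold is dealt with, the boundary cases at $P_j$ (left end) and $P_1$ (right end, the exceptional vertex) are easier since they are leaves of the resolution, and the verification of self-orthogonality concludes.
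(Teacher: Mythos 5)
Your plan correctly handles perfectness and the $|a|\geq 2$ case (constructing local homotopies from the one-dimensionality of $\Hom(P_s,P_t)$ for $|s-t|\leq 1$, exactly as the paper does), but the treatment of $|a|=1$ has a genuine gap.

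The claim that for $|a|=1$ the non-zero components are ``again a chain of maps $\cdots$ sitting between zero components'' and that the exclusion $|a|>1$ in Lemma \ref{classdiag} ``was only to rule out the trivial overlap giving an identity'' is incorrect. When $a=1$, the degree-$s$ term of $E[1]$ is $E^{s+1}$, so the vertical map $f_s$ goes from $P_i$ to a neighbouring $P_{i\pm 1}$ (or $P_m\to P_m$ at the fold) at \emph{every} position of the complex; nothing forces the support of $f$ to be confined to a window flanked by zeros. The locality in Lemma \ref{classdiag} is a consequence of $|a|>1$ forcing $\Hom(P_s,P_t)=0$ at all but one or two columns, and this mechanism disappears entirely at $|a|=1$. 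Consequently a morphism $E\to E[1]$ can be supported everywhere, and the pointwise homotopies you sketch cannot simply be glued independently; their overlaps must be reconciled across the whole length of $E$. The paper's proof for $a=1$ is a genuinely global argument: it writes $f_i=\lambda_i d_i$ for scalars $\lambda_i$ on the two halves, shows $f_m$ lies in the radical of $\End_{kG}(P_m)$ and factors as $d_m^t\gamma$ with $\gamma$ invertible, and then builds the homotopy from explicit \emph{alternating sums} $s_i=\bigl(\sum_{a=i}^m(-1)^{a+i}\lambda_a\bigr)\mathrm{Id}_{P_i}$, whose telescoping cancellation is what makes the homotopy relations hold simultaneously at all positions, including across the fold.

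Your discussion of the ``fold'' $P_m\to P_m$ is also too hand-wavy to be taken as a proof: the paper's key observation is that $d_m$ generates the Jacobson radical of $\End_{kG}(P_m)$, that $\{d_m,\dots,d_m^r\}$ is a basis of this radical, and that $f_m$ must be a non-unit, hence factors through $d_m$. That argument does not follow merely from ``lifting through the fold map or factoring through a contractible subcomplex.'' To repair the proposal you would need to carry out the global scalar bookkeeping for $a=\pm 1$ as the paper does, or find another argument that handles morphisms with full support.
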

\begin{proof}
	We shall first show that the maps given in 	\ref{classdiag} are null-homotopic. We start with the second class (ii) in \ref{classdiag}. Without loss of generality we will assume that $\epsilon=1$ so we consider the following diagram 
	\[
	\xymatrix{ 
		\cdots \ar[r] \ar[d]^{0} & P_{i-1} \ar[r]^{d} \ar[d]^{f_1}  & P_{i} \ar[r] \ar[d]^{f_2}  & \cdots \ar[d]^{0} \\
		\cdots \ar[r] & P_{i} \ar[r]^{\delta} & P_{i -1} \ar[r] & \cdots 
	}
	\]		
	We are going to show that $f_1$ and $f_2$ are null-homotopic by constructing a single morphism $s: P_i \longrightarrow P_i$ such that $f_1=s \circ d$ and $f_2=\delta \circ s$. In other words, all the other $s_i$'s will be assumed to be zero.
	$$\xymatrix{ 
		\cdots \ar[r] \ar[d]^{0} & P_{i-1} \ar[r]^{d} \ar[ld]^0 \ar[d]^{f_1}  & P_{i} \ar[r] \ar[d]^{f_2} \ar@{-->}[ld]^s  & \cdots \ar[d]^{0} \ar[ld]^0 \\
		\cdots \ar[r] & P_{i} \ar[r]^{\delta} & P_{i -1} \ar[r] & \cdots 
	}
	$$
	Since $\Hom_{kG\mmod}(P_{i-1},P_{i})$ is one-dimensional, it is generated by the map $d$ therefore, $f_1= \lambda d$ for some scalar $\lambda \in k$. Similarly, since $\Hom_{kG\mmod}(P_{i},P_{i-1})$ is generated by $\delta $ then $f_2=\mu \delta $ for some scalar $\mu$. 
	We note that $f_1$ and $f_2$ are parts of a morphisms of complexes so $\delta \circ f_1=f_2 \circ d$ and it gives us $\delta \circ (\lambda d)=(\mu \delta) \circ d$. Since $\delta \circ d$ is non-zero endomorphism of $P_{i-1}$ (sending the top to the socle) it follows that $\lambda=\mu$. Therefore $s=\lambda \text{Id}$ gives us the expected homotopy.
	\smallskip

	For the class (i), we assume again without loss of generality that $\epsilon=1$. We shall construct the homotopy using a morphism $s:P_{i+1} \longrightarrow P_i$
	\[
	\xymatrix{ 
		P_{i-1} \ar[r] \ar[d]^{0} & P_{i} \ar[r]^{d} \ar[ld]^0 \ar[d]^{f}   & P_{i+1} \ar[d]^{0} \ar@{-->}[dl]^{s}  \\
		P_{i+1} \ar[r] & P_{i} \ar[r]^{\delta} & P_{i-1} 
	}
	\]	
	satisfying $f = s\circ d$. 
	To construct $s$ let us first note $f \in \End_{kG\mmod}(P_{i})$ can not be an isomorphism. Since $i \geq j > 1$ then $\delta \neq 0$. On the other hand we have $\delta \circ f =0$. 
	Therefore $f$ lies in the radical of the algebra $\End_{kG\mmod}(P_{i})$. Since it is a two dimensional algebra, this radical has dimension one. On the other hand, any non-zero map $s:P_{i+1} \longrightarrow P_i $ satisfies $s \circ d  \neq 0$ and is not an isomorphism. More precisely the composition $s \circ d$  will send the $\mathrm{top}$ of the module $P_{i}$ to its $\mathrm{socle}$, as we can see in the following diagram
	\begin{center}
		$\begin{pmatrix} 
		& \textcolor{red}{S_{i} } &        \\
		\textcolor{red}{S_{i+1}} &          &   S_{i-1}  \\
		&   S_{i}  &       \\
		\end{pmatrix}  \longrightarrow^{\hspace{-0.4cm}d}  \hspace{0.1cm}
		\begin{pmatrix} 
		& \textcolor{green}{S_{i+1}} &        \\
		S_{i+2} &          &   \textcolor{yellow}{S_{i} }  \\
		& \textcolor{red} {S_{i+1} }  &       \\
		\end{pmatrix} \longrightarrow ^{\hspace{-0.45cm}s}
		\begin{pmatrix} 
		& S_{i} &        \\
		\textcolor{green}{S_{i+1} } &          &   S_{i-1}  \\
		&   \textcolor{green}{S_{i} }  &       \\
		\end{pmatrix}.$
	\end{center}
	Consequently for any non-zero map $s:P_{i+1} \longrightarrow P_i$ the maps $s \circ d $ and $f$ differ by a scalar. Hence there exists $s$ such that $ s \circ d =f$. 
	
	\smallskip
	
	Now we will assume that $a= 1$. Let $f:E \longrightarrow E[1]$ be a morphism of complexes which we will write as 
	\[
	\xymatrixcolsep{0.5cm}
	\xymatrixrowsep{0.5cm}
	\xymatrix{ 
		0  \ar[r] \ar[d]^{0}  &	P_j \ar[r]^{d_j} \ar[d]^{f_j}   & \cdots \ar[r]  & P_{m-2} \ar[r]^{d_{m-2}} \ar[d] & P_{m-1} \ar[r]^{d_{m-1}} \ar[d]^{f_{m-1}} & P_{m} \ar[r]^{d_m} \ar[d]^{f_m} & P_{m} \ar[r]^{d'_{m}} \ar[d]^{f^{\prime}_{m}}  &  P_{m-1} \ar[r]^{d'_{m-2}} \ar[d]^{f^{\prime}_{m-1}}    &\cdots \ar[r]  & P_{1} \ar[r] \ar[d]^{0}   & 0 \\
		P_{j} \ar[r]_{d_j}  &	P_{j+1} \ar[r]_{d_{j+1}}  & \cdots \ar[r] &  P_{m} \ar[r] &P_m \ar[r]_{d_m}  & P_{m} \ar[r]_{d'_{m-1}}  & P_{m-1} \ar[r]_{d'_{m-2}}   &  P_{m-2}  \ar[r] & \cdots \ar[r]   & 0 \ar[r] & 0 
	}
	\]
	Since $\Hom_{kG\mmod}(P_i,P_{i+1})$ is one-dimensional then for all $i<m$ there exists $\lambda_i \in k$ such that $f_i=\lambda_id_i$. Similarly, for all $i\leq m$ there exists $\lambda_i' \in k$ such that $f'_i=\lambda'_i d'_i$.
	One can also relate $f_m$ to $d_m$ as follows; $d_m$ generates the Jacobson radical of $\End_{kG\mmod}(P_m)$. Let $r+1= \dim \End_{kG\mmod}(P_m)$. Since $d_m$ is nilpotent of order $r+1$, then we claim that $A:=\{d_m,d^2_m,...,d^{r}_m\}$ is a basis for the algebra $J(\End_{kG\mmod}(P_m))$. Indeed, since the radical has dimension $r$, it is enough to prove that $A$ is a linearly independent set. 
	If
	\begin{center}
		$0 = a_1d_m+a_2 d^2_m+\cdots +a_r d^r_m$,
	\end{center}
	we can multiply the two sides of the equality by $d_m^{r-1}$. It gives us
	\begin{center}
		$ 0 = a_1 d^{r}_m + a_2 d^{r+1}_m+ \cdots + a_r d^{2r-1}_m = a_1 d^{r}_m$.
	\end{center}	
	It follows that $a_1=0$. By successively multiplying by $d^{r-2}_m$, $d^{r-3}_m$,\ldots \ we obtain $a_2=0$, $a_3=0$,\ldots. We deduce that $A$ is a linearly independent set.  
	Therefore, one can write $f_m= \sum_{i>0}^{r}\alpha_i d_m^{i}$.
	Let us set $t:=\min\{i \mid \alpha_i \neq 0\}$.
	Since $f_m$ is not an isomorphism then $t \geq 1$. Then we can rewrite $f_m$ as:
	$$
	f_m= \displaystyle  d_m^{t}\left(\sum_{i \geq t}\alpha_i d_m^{i-t}\right).
	$$
	We set $\gamma:=\sum_{i\geq t}\alpha_i d_m^{i-t} \in \alpha_t \mathrm{Id} + kA$. Since  $\alpha_t \neq 0$ then $\gamma$ is an isomorphism of $P_m$. Note that $d_m$ and $\gamma$ commute. 
	\smallskip
	
	Now we construct the homotopy. Let us consider the following diagram
	\[
	\xymatrixcolsep{0.6cm}
	\xymatrixrowsep{0.5cm}
	\xymatrix{ 
		0  \ar[r] \ar[d]^{0}  &	P_j \ar[r]^{d_j} \ar[d]^{f_j} \ar@{-->}[dl]^{s_j}   & \cdots \ar[r]  & P_{m-2} \ar[r]^{d_{m-2}} \ar[d] & P_{m-1} \ar[r]^{d_{m-1}} \ar[d]^{f_{m-1}} \ar@{-->}[dl]^{s_{m-1}} & P_{m} \ar[r]^{d_m} \ar[d]^{f_m} \ar@{-->}[dl]^{s_{m}}  & P_{m} \ar[r]^{d'_{m}} \ar[d]^{f^{\prime}_{m}} \ar@{-->}[dl]^{s'_{m}}  &  P_{m-1} \ar[r]^{d'_{m-2}} \ar[d]^{f^{\prime}_{m-1}}  \ar@{-->}[dl]^{s'_{m-1}}   &\cdots \ar[r]  & P_{1} \ar[r] \ar[d]^{0}   & 0 \\
		P_{j} \ar[r]_{d_j}  &	P_{j+1} \ar[r]_{d_{j+1}}  & \cdots \ar[r] &  P_{m-1} \ar[r]^{d_{m-1}} &P_m \ar[r]_{d_m}  & P_{m} \ar[r]_{d'_{m}}  & P_{m-1} \ar[r]_{d'_{m-2}}   &  P_{m-2}  \ar[r] & \cdots \ar[r]   & 0 \ar[r] & 0 
	}
	\]
	We already have $f_m=  d^{t}_m \circ \gamma = (d^{t-1}_m \circ \gamma) \circ d_m$ since $t \geq 1$. Now if we  set
	\begin{center}
		$s_m:= d_m^{t-1} \circ \gamma$ \quad and \quad $s_m' := 0$
	\end{center}
	then $f_m=d_m \circ s_m + s_m' \circ d_m$. Since $\Hom_{kG\mmod}(P_{m-1},P_m)$ is one-dimensional, there exists a scalar $\lambda_m$ such that $s_m \circ d_{m-1}=\lambda_m d_{m-1}$. Note that $\lambda_m = 0$ whenever $t > 1$. 
	
	\smallskip 
	
	Now we can define the maps $s_i$ and $s_i'$ for all $i < m$ by 
	$$  s_i:= \left(\sum_{a=i}^{m}(-1)^{a+i} \lambda_a\right) \mathrm{Id}_{P_i} \quad 
	\text{and} 
	\quad s'_i:=\left(\sum^{m}_{a=i+1}(-1)^{a+i-1}\lambda'_a\right)\mathrm{Id}_{P_i}.$$
	Then for all $i < m-1$ we have
	$$d_i \circ s_i + s_{i+1} \circ d_i =  \left(\sum_{a=i}^{m}(-1)^{a+i} \lambda_a\right) d_i + \left(\sum_{a=i+1}^{m}(-1)^{a+i+1} \lambda_a\right) d_i = \lambda_i d_i = f_i$$
	and similarly $d_i'\circ s'_i +s'_{i-1} \circ d'_i = \lambda_{i}' d_i' = f_i'$ for all $i \leq m$. Finally, by definition of $\lambda_m$ we have
	$$ \begin{aligned} s_m \circ d_{m-1}+d_{m-1}\circ s_{m-1} & \, = \lambda_{m} d_{m-1} + d_{m-1}\circ s_{m-1}  \\& \,= \lambda_{m} d_{m-1} +  (\lambda_{m-1} - \lambda_{m})d_{m-1} \\& \, = \lambda_{m-1}d_{m-1} \\ & \, = f_{m-1}. \end{aligned}$$
	Therefore $f$ is null-homotopic. A similar argument applies for morphisms $E \longrightarrow E[-1]$.
\end{proof}

\begin{center}
	\textbf{ACKNOWLEDGEMENTS}
\end{center}	
I would like to thank my Ph.D advisor, Olivier Dudas, for all his guidance and support. I would also like to thank the referee for giving very useful remarks. This project has received funding from the European Union’s Horizon 2020 research and innovation program under the Marie Sklodowska-Curie grant agreement No 665850.

\newpage
\bibliographystyle{abbrv} 
\bibliography{thebib1}

\end{document}